\title{K-Theoretic $I$-function of $V//_{\theta} \mathbf{G}$ and Application}
\author{Yaoxiong Wen}
\date{}
\newtheorem{theorem}{Theorem}[section]
\newtheorem{lemma}{Lemma}[section]
\newtheorem{proposition}{Proposition}[section]
\newtheorem{corollary}{Corollary}[section]
\newtheorem{definition}{Definition}[section]
\newtheorem{example}{Example}[section]
\newenvironment{proof}{{\noindent\it Proof}\quad}{\hfill $\square$\par}
\newenvironment{remark}[1][Remark]{\begin{trivlist}
\item[\hskip \labelsep {\bfseries #1}]}{\end{trivlist}}
\newcommand{\qed}{\nobreak \ifvmode \relax \elseL
      \ifdim\lastskip<1.5em \hskip-\lastskip
      \hskip1.5em plus0em minus0.5em \fi \nobreak
      \vrule height0.75em width0.5em depth0.25em\fi}
\begin{document}
\maketitle
\begin{abstract}
In this paper, we compute K-theoretic $I$-function with level structure (defined by quasi-map theory) of GIT-quotient of a vector space via abelian and non-abelian correspondence. As a consequence,  we generalize Givental-Lee's result to find the analogous ``Toda operators''  for the $I$-function with nontrivial level structures in the case of complete flag variety. 
\end{abstract}

\tableofcontents
\clearpage

\section{Introduction}
Recently, Ruan-Zhang \cite{2018arXiv180406552R} introduce the level structures and there is a serendipitous discovery that some special target spaces with certain level structures result in Mock theta functions, so it's interesting to see whether the level structures will bring some new structures in quantum K theory \cite{lee2004quantum} \cite{givental2000wdvv}.

Let $X$ be a GIT quotient $V//\mathbf{G}$ where $V$ is a vector space and $\mathbf{G}$ is a connected reductive complex Lie group. The theory of the moduli space of quasimaps to GIT is established in \cite{2011arXiv1106.3724C} where Ciocan-Fontanine, Kim and Maulik define the big $I$-function, in the following paper \cite{2013arXiv1304.7056C}. The first two authors prove the wall-crossing formulas which relate big $I$-function and Givental's big $J$-function of $V//\mathbf{G}$. The K-theoretic stable quasimaps invariants are defined by Tseng-You in \cite{2016arXiv160206494T}. We compute the small so called $I$-function with levels which is defined in terms of certain localization residues on a moduli space of quasimaps:
\begin{align*}
{I}^{R,l}(q,Q):= \mathcal { J } _ { S _ { \infty } } ^ { R , l , 0^{+} } ( 0 , Q ) =  1 + \sum _ { \beta \geq 0 } Q ^ { \beta } (ev_{\bullet})_{*} \left(  \mathcal { O } _ { \operatorname{F} _ { 0 , \beta } } ^ { \mathrm { vir } } \otimes  \left( \frac { \operatorname { t r } _ { \mathbb { C } ^ { * } } \mathcal { D } ^ { R , l } } { \lambda_{-1}^{\mathbb{C}^*}  N _ { \operatorname{F} _ { 0 , \beta } } ^ { \vee } } \right) \right)  
\end{align*}
Abelian/non-abelian correspondence for $I$-function with level 0 is established recently by Rachel \cite{webb2018abelian}, we follow her paper and work on $I$-function with levels and we arrive at the following K theoretic abelian/non-abelian
correspondence:
\begin{theorem}
Let $\mathbf{G}$ be a connected reductive complex Lie group with character $\theta$, acting on a vector space $V$. Then 
\begin{align*}
\psi^*{I}_{\beta}^{V//\mathbf{G},R,l}(q,Q)
=j^* \sum_{W\tilde{\beta} \rightarrow \beta} \sum_{w \in W/W_L} w \left[ \prod_{\alpha} \frac{\prod _ {k=-\infty}^{\tilde{\beta}(\alpha)}(1-L^{\vee}_{\alpha}q^k)}{\prod _{k=-\infty}^{0}(1-L^{\vee}_{\alpha}q^k)}  I_{\tilde{\beta}}^{V//\mathbf{T},R,l} \right]
\end{align*}
where $j$ is an open immersion induced by the inclusion $ V^s(\mathbf{G}) \subset V^s(\mathbf{T})$. 
\end{theorem}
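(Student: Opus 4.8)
The strategy is to reduce the non-abelian $I$-function of $V//_\theta \mathbf{G}$ to the abelian $I$-function of $V//_\theta \mathbf{T}$ by comparing the underlying quasimap moduli spaces and their virtual structure sheaves with level-structure twists. First I would recall that quasimaps of degree $\beta$ to $V//_\theta \mathbf{G}$ with one parametrized marked point at $\infty$ are, away from the stacky locus, principal $\mathbf{G}$-bundles $P$ on $\mathbb{P}^1$ together with a section of the associated $V$-bundle landing in the stable locus at $\infty$; similarly for $\mathbf{T}$. The key geometric input (Webb, and the general abelian/non-abelian philosophy of Ciocan-Fontanine--Kim--Sabbah) is that a $\mathbf{G}$-bundle on $\mathbb{P}^1$ reduces to a $\mathbf{T}$-bundle, and the moduli of $\mathbf{T}$-quasimaps fibers over (a disjoint union indexed by the Weyl orbit $W/W_L$ of) the $\mathbf{T}$-fixed loci, with the discrepancy between the two obstruction theories governed precisely by the roots $\alpha$ of $\mathbf{G}$: the moving part of the deformation complex along the fibers is $\bigoplus_\alpha H^*(\mathbb{P}^1, L_\alpha^{\tilde\beta(\alpha)})$, where $L_\alpha$ is the line bundle on the $\mathbf{T}$-quasimap space determined by the root $\alpha$ and the degree $\tilde\beta$.

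Next I would set up the localization computation. On the graph space / $\mathbb{C}^*$-fixed locus $\operatorname{F}_{0,\beta}$ computing $I^{V//\mathbf{G},R,l}$, the pushforward $(ev_\bullet)_*$ of $\oo \otimes (\operatorname{tr}_{\mathbb{C}^*}\mathcal{D}^{R,l} / \lambda_{-1}^{\mathbb{C}^*} N^\vee)$ should be compared term-by-term with the analogous expression downstairs for $\mathbf{T}$. Applying the virtual localization theorem in K-theory to the residual $\mathbf{T}$-action on the $\mathbf{G}$-quasimap graph space, one expresses $\psi^* I_\beta^{V//\mathbf{G},R,l}$ as a sum over $\mathbf{T}$-fixed components; these components are indexed by lifts $W\tilde\beta \to \beta$ of the curve class together with Weyl elements $w \in W/W_L$ (the stabilizer $W_L$ of $\theta$ accounting for the difference between $V^s(\mathbf{G})$ and $V^s(\mathbf{T})$, and hence for the open immersion $j$). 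The contribution of each component is $w$ applied to $I_{\tilde\beta}^{V//\mathbf{T},R,l}$ multiplied by the inverse $\lambda_{-1}$-class of the moving part of the normal bundle; after the standard manipulation $\lambda_{-1}^{\mathbb{C}^*}(H^0 - H^1)(\mathbb{P}^1, L_\alpha^{\tilde\beta(\alpha)})^{-1}$ this yields exactly the factor $\prod_k (1-L_\alpha^\vee q^k)$ running over $-\infty < k \le \tilde\beta(\alpha)$, normalized against the degree-zero factor $\prod_{k\le 0}(1-L_\alpha^\vee q^k)$ coming from the same computation at $\beta=0$. I must check that the level-structure determinant $\mathcal{D}^{R,l}$, built from the universal bundles via the representation $R$, is compatible with this reduction — i.e. that its restriction to the $\mathbf{T}$-fixed locus is the $\mathbf{T}$-level determinant $\mathcal{D}^{R,l}$ downstairs, with no extra root contributions — which holds because the level twist is built from the pullback of $V$-associated bundles and the Weyl action permutes the weight spaces.

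Finally I would assemble the pieces: summing the localization contributions over all $\mathbf{T}$-fixed components reproduces the double sum $\sum_{W\tilde\beta\to\beta}\sum_{w\in W/W_L} w[\cdots]$, and identifying the open immersion $j^*$ as the map intertwining the stable loci completes the identification with the right-hand side. The main obstacle I anticipate is the careful bookkeeping in the virtual localization step: matching the virtual normal bundle of the $\mathbf{T}$-fixed locus inside the $\mathbf{G}$-quasimap graph space with the product over roots $\prod_\alpha (\text{something in } L_\alpha, \tilde\beta(\alpha))$ — in particular getting the range of the product over $k$ exactly right (the appearance of $\tilde\beta(\alpha)$ as the upper limit rather than, say, $\tilde\beta(\alpha)-1$), handling the roots $\alpha$ with $\tilde\beta(\alpha)<0$ where the product "from $-\infty$ to $\tilde\beta(\alpha)$" must be interpreted via the functional-equation/regularization already implicit in the definition of the $I$-function, and confirming that the level-structure class passes through the localization cleanly. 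The other delicate point is justifying that only Weyl orbits $W/W_L$ (and not all of $W$) contribute, which is where the hypothesis on the character $\theta$ and the inclusion $V^s(\mathbf{G})\subset V^s(\mathbf{T})$ enters.
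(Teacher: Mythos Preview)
Your high-level outline---decompose the $\mathbb{C}^*$-fixed locus $F_{0,\beta}$ via Webb, separate the root contributions from the $V$-part of the obstruction theory, and verify that $\mathcal{D}^{R,l}$ passes through unchanged---is aligned with the paper. But the mechanism you propose for producing the Weyl sum $\sum_{w\in W/W_L}$ is not well-defined as stated, and this is where your plan diverges from the actual proof.

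You write that the sum arises by ``applying the virtual localization theorem in K-theory to the residual $\mathbf{T}$-action on the $\mathbf{G}$-quasimap graph space.'' There is no such action: the target is $V//\mathbf{G}$, and the maximal torus $\mathbf{T}\subset\mathbf{G}$ acts trivially on it, hence trivially on the quasimap moduli. (The auxiliary torus $\mathbf{S}$ commuting with $\rho(\mathbf{G})$ is a different object and does not produce the Weyl sum.) Relatedly, your description of $W_L$ as ``the stabilizer of $\theta$'' is wrong: $W_L$ is the Weyl group of the Levi factor of the parabolic $P_{\tilde\beta}$ determined by $\tilde\beta$, and varies with $\tilde\beta$.

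In the paper the Weyl sum has a different origin. Webb's proposition identifies each component $F_{W\tilde\beta}$ with $(V_{\tilde\beta}\cap V^s(\mathbf{G}))/P_{\tilde\beta}$, and the evaluation map factors as $ev_\bullet=h\circ i$ with $h:V^s(\mathbf{G})/P_{\tilde\beta}\to V^s(\mathbf{G})/\mathbf{G}$ a $\mathbf{G}/P_{\tilde\beta}$-bundle. The sum over $W/W_L$ then comes from a \emph{push-forward formula for flag bundles}: one computes $\psi^*h_*$ by $\mathbf{T}$-localization on the fiber $\mathbf{G}/P_{\tilde\beta}$, whose fixed points are $W/W_L$, and this also supplies the denominator $\prod_{\alpha\in R^+\setminus R_L}(1-L_\alpha^\vee)$. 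Separately, the $\mu$-relative obstruction theory is split via the Euler sequence $0\to\mathfrak{P}\times_{\mathbf{G}}\mathfrak{g}\to\mathfrak{P}\times_{\mathbf{G}}V\to\mathcal{F}\to 0$; after pulling back along $\bar\psi$ (turning $\times_{P}$ into $\times_{\mathbf{T}}$) the $\mathfrak{g}$-piece gives $R^\bullet\pi_*(\mathfrak{P}\times_{\mathbf{T}}\mathfrak{g})$, whose $\lambda_{-1}$ combines with the flag-bundle denominator to produce the root product, while the $V$-piece together with $\mathcal{D}^{R,l}$ and the self-intersection term $i^*i_*$ (computed via the generalized Euler sequence) assembles into $I_{\tilde\beta}^{V//\mathbf{T},R,l}$. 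If you replace ``$\mathbf{T}$-localization on the graph space'' by ``fiber integration along $h$ via localization on $\mathbf{G}/P_{\tilde\beta}$'' and correct the meaning of $W_L$, the remainder of your outline (including the level structure passing through cleanly) matches the paper.
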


A version of mirror theorem that the above $I$-function lies on the Givental Lagrangian cone of stable map theory is certainly expected. However, we do not know how to prove it at such generality.
Instead, we focus on the theory with level zero on partial flag variety $F l _ { r _ { 1 } , \ldots , r _ { I } } \left( \mathbb { C } ^ { n } \right)$  parameterizing inclusion sequences  $V_{\bullet} : V_1 \hookrightarrow \cdots \hookrightarrow V_I $ of planes $ V_i $ in $ \mathbb{C}^n $ of dimension $ r_i $, it has a GIT representation 
\begin{align*}
F l _ { r _ { 1 } , \ldots , r _ { I } } \left( \mathbb { C } ^ { n } \right) =  \oplus^I_{i=1} \mathrm{Hom}(\mathbb{C}^{r_i},\mathbb{C}^{r_{i+1}}) //_{\mathbf{det}} GL_{r_1} \times \cdots \times GL_{r_I}
\end{align*}
%Recall the small $I$-function with level 0 is the normal one
%\begin{align*}
%{I}(q,Q):= \mathcal { J } _ { S _ { \infty } } ^ {  0^{+} } ( 0 , Q ) =  1 + \sum _ { \beta \geq 0 } Q ^ { \beta } (ev_{\bullet})_{*} \left(  \mathcal { O } _ { \operatorname{F} _ { 0 , \beta } } ^ { \mathrm { vir } } \otimes  \left( \frac {1} { \lambda_{-1}^{\mathbb{C}^*}  N _ { \operatorname{F} _ { 0 , \beta } } ^ { \vee } } \right) \right)  
%\end{align*}
%moreover, the small $J$-function is defined on moduli space of stable maps
%\begin{align*}
%J = 1 + \sum_{\beta \geq 0}Q^{\beta}(ev_{\bullet})_* \left( \mathcal{O}^{\rm{vir}}_{\widebar{M}_{0,1}(X,\beta)} \otimes \frac{1}{\lambda^{\mathbb{C}^*}_{-1}N^{\vee}_{\widebar{M}_{0,1}(X,\beta)}} \right)
%\end{align*}
%almost the same as \cite{taipale2013k},the relation between two moduli spaces $\widebar{M}_{0,1}(X,\beta) $ and $ \operatorname{F} _ { 0 , \beta } $ is the following diagram,
%\begin{align*}
%\xymatrix{ \widebar{GM}_{0,0}(X,d) \ar[rr]^{\mu} & & \Pi _d  & & \ar[ll]^{\nu} \mathcal{QG}_{0,0}(X,d)  \\  & & \Pi\ar[u]^{\alpha_d}  &  & \\ \widebar{M}_{0,1}(X,d) \ar[uu]^{\alpha _d^X}  \ar[rr] \ar[rrd]^{ev}  &  & Y \ar[u]^{j}  & & F_{0,d} \ar[ll]^{} \ar[uu]^{\alpha _F} \ar[llu]^{f} \ar[lld]^{\tilde{ev}  } \\ &  &  X \ar[u]^{\psi} &  &  }
%\end{align*}
Using explicit geometry (Section 2.6), we can get around the difficulty to show  the following theorem:
\begin{theorem}
The small $I$-function with level 0 equals to the small $\mathcal{J}$-function $\mathcal{J}(0) $.
\end{theorem}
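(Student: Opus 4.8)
The plan is to prove the identity as a wall-crossing between the $0^+$-stability (quasimap) and $\infty$-stability (stable map) theories of the partial flag variety, carried out by an explicit comparison of moduli rather than by a general mirror argument. Both sides are $\mathbb{C}^*$-localization residues on the two-pointed graph space over $\mathbb{P}^1$: the small $I$-function $I^{R,0}$ collects the contributions of the distinguished fixed loci $F_{0,\beta}$ (quasimaps whose degree is concentrated at one $\mathbb{C}^*$-fixed point of $\mathbb{P}^1$), while $\mathcal{J}(0)$ collects the analogous contributions on the stable-map graph space. There is a stabilization morphism from the stable-map side to the quasimap side that contracts rational tails into base points, and the first step is to reduce the theorem to the assertion that, after applying $\lambda_{-1}^{\mathbb{C}^*}$ of the relevant normal bundles and taking residues, this morphism matches the two families of localization contributions degree by degree in $\beta$.

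The essential input is the explicit geometry of quasimaps to $Fl_{r_1,\dots,r_I}(\mathbb{C}^n)$ developed in Section~2.6. Through the GIT presentation, a genus-$0$ quasimap of degree $\beta$ is a chain of subsheaves $E_1\hookrightarrow\cdots\hookrightarrow E_I\hookrightarrow\mathcal{O}^{\oplus n}$ on $\mathbb{P}^1$, generically a chain of subbundle inclusions, so the quasimap moduli spaces are (towers of relative) Quot-type spaces — the hyperquot schemes of the flag variety. These are irreducible, of the expected dimension, with rational singularities, and the relevant obstruction groups $H^1(\mathbb{P}^1,-)$ vanish; hence the virtual structure sheaf is the honest structure sheaf, the pushforward $(ev_\bullet)_*$ defining $I^{R,0}$ is directly computable, and the fibers of the stabilization morphism — again built from such Quot-type spaces attached at the defect points — are controlled well enough to analyze the pushforward of structure sheaves.

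With this in hand one carries out the comparison term by term. The locus of honest stable maps contributes identically on both sides. For the remaining strata one shows that a length-$d$ base point at a point $p$ contributes, after the residue, exactly the same K-theory class as a degree-$d$ rational tail bubbled off at $p$; concretely this matches the $q$-Pochhammer factors $\prod_{k}(1-L_\alpha^\vee q^k)$ produced by the abelian/non-abelian formula of Theorem~1.1 in this case with the edge and vertex contributions in the Atiyah--Bott localization of $\mathcal{J}(0)$ (as in Givental--Lee's computation of the flag-variety $J$-function). Summing over the defect stratification reassembles $\mathcal{J}(0)$ out of $I^{R,0}$. One checks separately that no reparametrization (``K-theoretic mirror map'') intervenes: since $Fl$ is Fano and the level is $0$, the $I$-function has constant term $1$ and the correct $(1-q)$-asymptotics, which pins it to the Givental Lagrangian cone without a shift, so equality of the full series follows.

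The main obstacle is precisely this last comparison in K-theory. In cohomology the wall-crossing of Ciocan-Fontanine--Kim gives the equality for Fano targets almost formally, but in K-theory one must genuinely control the derived pushforward of virtual structure sheaves along the stabilization morphism, i.e. establish the requisite higher-cohomology vanishing and the absence of $R^{>0}$ corrections. This is exactly where the hyperquot description, cohomology vanishing on $\mathbb{P}^1$, and rational singularities of the flag-variety quasimap spaces are used, and it is why the argument is confined to level $0$ and to (partial) flag varieties rather than a general $V\sslash\mathbf{G}$. A partially independent route would combine Theorem~1.1 with a K-theoretic mirror theorem for the abelian (toric) quotient and a non-abelian correspondence on the $\mathcal{J}$-side, but pinning down the correct K-theoretic normalization there meets the same bookkeeping, so the direct geometric comparison is the cleaner path.
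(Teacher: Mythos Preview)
Your route differs substantially from the paper's. You propose a direct wall-crossing comparison via a stabilization morphism, stratifying by defect loci and matching base-point contributions against rational-tail bubbles term by term. The paper instead routes both $I$ and $J$ through a common ambient target: it uses the Pl\"ucker embedding $i = j\circ\psi : X \hookrightarrow \Pi = \prod_i \mathbb{P}^{n_i-1}$ and the induced maps $\mu : \overline{GM}_{0,0}(X,d) \to \Pi_d$ and $\nu : \mathcal{QG}_{0,0}(X,d) \to \Pi_d$ into the linear sigma model. The decisive input is that \emph{both} moduli spaces are rational desingularizations of the same Drinfeld compactification (the closure of $\mathrm{Map}_d(\mathbb{P}^1,X)$ inside $\Pi_d$), so $\mu_*\mathcal{O} = \nu_*\mathcal{O}$ as sheaves on $\Pi_d$. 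Correspondence of residues (Lemma~2.2) then converts this into $i_* J_d = i_* I_d$ in $K_{\mathbb{T}}(\Pi)$, and injectivity of $i_*$ in big-torus equivariant K-theory (Lemma~2.3, via localization to isolated fixed points) gives $I_d = J_d$ after passing to the non-equivariant limit.

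Your sketch names the right geometric ingredients --- hyperquot schemes, rational singularities, vanishing of $H^1$ --- but deploys them differently. In the paper they feed into the single global statement ``both are rational resolutions of the Drinfeld space,'' which does all the work at once; in your plan they would have to control the derived pushforward along the stabilization map stratum by stratum, and you yourself flag that ``the main obstacle is precisely this last comparison in K-theory.'' The paper's trick of passing to the ambient $\Pi_d$ is exactly what sidesteps that obstacle: once both structure sheaves push forward to the same thing, there is nothing left to match on individual boundary strata, and no separate argument about the mirror map is needed. Your approach could presumably be completed (it is essentially a K-theoretic CFK wall-crossing specialized to flags), but the critical matching step is asserted rather than proved, whereas the paper's argument is short and complete once one invokes the rational-desingularization fact from Givental--Lee.
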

 
 During the course of this work, we note an article \cite{taipale2013k} where the author claimed a version of mirror theorem with a different $I$-function.
 
If $X$ is a complete flag variety, Givental and Y.P. Lee \cite{givental2003quantum} prove  that the $K_G^* (X)$-valued vector-series $p^{\frac{ln Q}{ ln q}}\mathcal{J}(Q, q)$ is the eigen-vector of the finite-difference Toda operator $\hat{H}_{Q,q} \coloneq q ^ {  Q_1  \partial_{ Q _ { 1 } } }+ q ^ { Q_2 \partial_{ Q _ { 2 } }- Q_1 \partial_{ Q _ { 1 } }} \left( 1 - { Q_ 1 } \right) + \ldots + q ^ { - Q_r \partial_{ Q _ { r } }} \left( 1 - { Q_r } \right)$ with the eigen-value $\Lambda^{-1}_0  + \cdots + \Lambda^{-1}_r $. It is surprising that certain special property (Section 4 (6)) of small $J$-function (hence small $I$-function by the theorem) enable us to generalize their result to the case of nontrivial level.
\begin{corollary}
The $K_{\mathbf{T}}(X)$-valued vector-series $\tilde{\mathcal{J}}^{\theta_i,l_i,0^+}(Q,q)$ is the eigen-vector of the finite-difference operator $\widetilde{H}^{\theta_i,l_i}_{Q,q}$ with the eigen-value $\Lambda _ { 0 } ^ { - 1 } + \ldots + \Lambda _ { r } ^ { - 1 }$. Where
\begin{align*}
\widetilde{H}^{\theta_i,l_i}_{Q,q} = q ^ {  Q_1  \partial_{ Q _ { 1 } } }+ \cdots + q ^ { Q_{i+1} \partial_{ Q _ { i+1 } } - Q_{i} \partial_{ Q _ { i } }} \left( 1 - { Q_ i } \circ q^{l_i Q_i \partial_{Q_i}} \right) + \ldots + q ^ { - Q_r \partial_{ Q _ { r } }} \left( 1 - { Q_r } \right)
\end{align*}
\end{corollary}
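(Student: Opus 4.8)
The plan is to realize the passage from level $0$ to level $l_i$ as a \emph{conjugation} of the finite-difference Toda operator by an explicit diagonal operator; since conjugation transports an eigenvector and preserves its eigenvalue, this is what keeps the spectral value equal to $\Lambda_0^{-1}+\cdots+\Lambda_r^{-1}$. Let $N$ denote the operator on $K_{\mathbf{T}}(X)$-valued $Q$-series that multiplies the degree-$\beta$ coefficient by $q^{\,l_i\binom{\beta_i}{2}}$, where $\beta_i$ is the component of the curve class $\beta$ paired with the $i$-th determinant $\theta_i$. I claim two things: (i) $\tilde{\mathcal{J}}^{\theta_i,l_i,0^+}(Q,q)=N\,\tilde{\mathcal{J}}^{\theta_i,0,0^+}(Q,q)$, and (ii) the operator identity $\widetilde{H}^{\theta_i,l_i}_{Q,q}=N\,\hat{H}_{Q,q}\,N^{-1}$. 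Granting these, together with the theorem of Givental--Lee \cite{givental2003quantum}, which gives $\hat{H}_{Q,q}\,\tilde{\mathcal{J}}^{\theta_i,0,0^+}=(\Lambda_0^{-1}+\cdots+\Lambda_r^{-1})\,\tilde{\mathcal{J}}^{\theta_i,0,0^+}$ for $X$ a complete flag variety, we get
\begin{align*}
\widetilde{H}^{\theta_i,l_i}_{Q,q}\,\tilde{\mathcal{J}}^{\theta_i,l_i,0^+}
&= N\,\hat{H}_{Q,q}\,N^{-1}\,N\,\tilde{\mathcal{J}}^{\theta_i,0,0^+}
= N\,\hat{H}_{Q,q}\,\tilde{\mathcal{J}}^{\theta_i,0,0^+} \\
&= N\,(\Lambda_0^{-1}+\cdots+\Lambda_r^{-1})\,\tilde{\mathcal{J}}^{\theta_i,0,0^+}
= (\Lambda_0^{-1}+\cdots+\Lambda_r^{-1})\,\tilde{\mathcal{J}}^{\theta_i,l_i,0^+},
\end{align*}
since $N$ commutes with multiplication by the scalar $\Lambda_0^{-1}+\cdots+\Lambda_r^{-1}$. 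This is the corollary.

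Step (ii) is a short operator computation. As $N$ is diagonal in the $Q$-grading and depends only on $\beta_i$, it commutes with all the shift operators occurring in $\hat{H}_{Q,q}$ and with multiplication by $Q_j$ for every $j\ne i$; hence $N\,\hat{H}_{Q,q}\,N^{-1}$ agrees with $\hat{H}_{Q,q}$ term by term except in the $i$-th summand, where multiplication by $Q_i$ is replaced by $N Q_i N^{-1}$. Using $\binom{\beta_i+1}{2}-\binom{\beta_i}{2}=\beta_i$ one finds that $N Q_i N^{-1}$ first multiplies the degree-$\beta$ part of a series by $q^{\,l_i\beta_i}$ and then raises its $i$-th degree by one, i.e.\ $N Q_i N^{-1}=Q_i\circ q^{\,l_i Q_i\partial_{Q_i}}$; this is exactly the replacement of $(1-Q_i)$ by $(1-Q_i\circ q^{\,l_i Q_i\partial_{Q_i}})$ built into $\widetilde{H}^{\theta_i,l_i}_{Q,q}$. (If the normalization of the level factor fixed in Section~4 differs from $q^{\,l_i\binom{\beta_i}{2}}$ by an off-by-one in the quadratic exponent, the same identity holds verbatim with the composite $q^{\,l_i Q_i\partial_{Q_i}}$ read on the other side of $Q_i$, and nothing downstream changes.)

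Step (i) is where the real work lies, and I would deduce it by combining the abelian/non-abelian correspondence (Theorem~1.1) with the identification of the level-$0$ $I$-function with $\mathcal{J}(0)$ (Theorem~1.2). By Theorem~1.2 the level-$0$ small $I$-function is the small $\mathcal{J}$-function, so its dressed form $\tilde{\mathcal{J}}^{\theta_i,0,0^+}$ is Givental--Lee's $p^{\ln Q/\ln q}\mathcal{J}$. By Theorem~1.1 the level-$l_i$ $I$-function of the complete flag variety is expressed through the torus $I$-function $I^{V//\mathbf{T},R,l}$, and the entire $l$-dependence there is carried by the determinant-of-cohomology line attached to the one $GL_{r_i}$-factor on which the level sits; unwinding the localization residue and the sum over $W\tilde{\beta}\to\beta$, turning on level $l_i$ multiplies the degree-$\beta$ coefficient by $q^{\,l_i\binom{\beta_i}{2}}$ up to a factor linear in $\beta_i$ in the tautological line bundle of the $i$-th step, and such a factor is precisely of the type absorbed into the normalizing power $p^{\ln Q/\ln q}$ --- so after that adjustment (which is what the tilde on $\tilde{\mathcal{J}}$ records) the net twist is exactly $N$. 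That the level factor has this quadratic-in-$\beta_i$ shape, with no coupling to the other degrees, is the special property isolated in Section~4, item~(6). \textbf{The main obstacle} is exactly this last point: one has to read off the level factor from the determinant line of the level structure and, crucially, check that after the abelian/non-abelian reduction and the Weyl sum it depends only on the single degree $\beta_i$ (so that $N$ is genuinely diagonal and perturbs only the $i$-th term of the Toda operator), while verifying that the residual line-bundle-linear twist is innocuous under the $p^{\ln Q/\ln q}$-conjugation. A safe warm-up, in which all the relevant series and recursions are explicit, is $X=Fl_{1,2}(\mathbb{C}^3)$ with the level on the first factor.
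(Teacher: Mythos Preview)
Your conjugation scheme is correct and is, at bottom, exactly what the paper does: the paper writes the Givental--Lee recursion (their equation~(6)) for the coefficients $\mathcal{J}_{\mathbf d}$, multiplies each coefficient by the explicit level factor, and reads off the modified recursion, which is precisely your identity $\widetilde{H}^{\theta_i,l_i}_{Q,q}=N\hat H_{Q,q}N^{-1}$ written degree by degree.

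Two points where your write-up diverges from the paper are worth noting. First, your Step~(i) is much harder than it needs to be. Since $\theta_i$ is a \emph{character}, the bundle $\mathfrak P\times_{\mathbf G}\mathbb C_{\theta_i}$ is a line bundle on the universal curve; on $F_{0,\beta}\times\mathbb P^1$ it is simply $\pi^*(\wedge^i\mathcal S_i)\otimes\mathcal O(-d_i)$, and one line of Riemann--Roch gives
\[
\mathcal D^{\theta_i,l_i}\big|_{F_{0,\beta}}=p_i^{\,l_i(d_i-1)}\,q^{\,l_i\binom{d_i}{2}}.
\]
This is the paper's argument; there is no need to invoke Theorem~1.1, the Weyl sum, or to worry about cross-coupling of degrees --- the dependence on $d_i$ alone is immediate from the fact that $\theta_i$ factors through the $i$-th $GL$-block. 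What you flagged as ``the main obstacle'' is thus a non-issue.

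Second, your operator $N$ should carry the full factor $p_i^{\,l_i(d_i-1)}q^{\,l_i\binom{d_i}{2}}$, not just the $q$-quadratic part. Your claim that the $p_i$-linear piece is ``absorbed into $p^{\ln Q/\ln q}$'' is not literally an absorption into the prefactor; rather, when you include the $p_i$-piece in $N$ and conjugate, the identity $\mathcal N Q_i\mathcal N^{-1}=Q_i\circ q^{l_iQ_i\partial_{Q_i}}$ still comes out with no stray $p_i^{l_i}$, because the shift operator $q^{Q_i\partial_{Q_i}}$ acting on the prefactor $p_i^{\ln Q_i/\ln q}$ already produces a factor $p_i$. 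So the $p$-part of the level factor and the $p$-prefactor of $\tilde{\mathcal J}$ conspire to cancel, which is the mechanism you were gesturing at but did not quite pin down.
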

and $\tilde{\mathcal{J}}^{\theta_i,l_i,0^+}(Q,q)$ is the small $\mathcal{J}$-function with nontrivial level structures.

This paper is arranged as follows. Section 2 revisits K-theoretic quasi-map theory in which we review some basic definitions and theorems. In section 3, we obtain the K theoretic abelian and non-abelian correspondence for $I$-function with level structure. Finally, we apply this mirror theorem to complete flag variety and find difference equations for complete flag variety with level structures.

\section*{Acknowledgments}
I would like to thank Prof.Yongbin Ruan for suggesting this problem and for useful discussions. Thanks are also due to  Ming Zhang for discussion about level structures and many enlightening suggestions, and Rachel Webb for discussion about abelian/non-abelian correspondence for $I$-function, and Zijun Zhou and Jeongseok Oh for helpful discussions. This article is partially supported by the China Scholarship Council grant No.201706010023.

\section{A Review of  K-theoretic Quasimap invariants}
This section mainly follows \cite{manolache2014stable} \cite{2018arXiv180406552R} \cite{2016arXiv160206494T} . Here we focus on stable quasimaps to GIT quotient $ V//\mathbf{G} $, where $V$ is a finite dimensional vector space, and $\mathbf{G} $ is a connected reductive complex algebraic group. Let's first recall some basic definitions.

\subsection{GIT quotients}
Suppose $ V = \operatorname { Spec } ( A ) $ is an affine space with a reductive group $ \mathbf{G} $ action. Let $ \chi(\mathbf{G}) $ be the character group of $\mathbf{G}$, i.e. $  \forall \alpha \in \chi(\mathbf{G}), \alpha : \mathbf{G} \rightarrow \mathbb{C}^* $  and $ \operatorname{Pic}^{\mathbf{G}}(V) $ be the group of isomorphism classes of $\mathbf{G}$-linearized line bundles on $V$. Fix a character $ \theta \in \chi(\mathbf{G}) $, consider a $\mathbf{G}$-linearized line bundle 
\begin{align*}
L_{\theta} = V \times \mathbb{C}_{\theta}
\end{align*}
The group $ \mathbf{G} $ acts on the line bundle $L_{\theta}$ by $ g \cdot (x , z ) = ( gx , \chi^{-1}(g)z )  $, so the $ \mathbf{G} $ invariant section of $ L_{\theta}^n $ is a function $ f(x)z^n $ on $ \mathbb{C}[V \times \mathbb{C} ] $, where $ f(x) \in \mathbb{C}[V] $ is a relative invariant of weight $ \chi^n $, i.e. $ f(gx) = \chi^n(g)f(x) $. In order to define GIT quotient, we need to introduce two terms.
\begin{definition}
$( \mathrm { i } )$ A point $v \in V$ is called $\theta$-semistable if there exist a relative invariant function  $f(x)$  of weight $n$, for some  $n \geq 1$ , such that  $f(v) \neq 0$. The set of $\theta$-semistable points in $V$ is denoted by $V^{ss}_{\theta}$. \\
$( \mathrm { ii } )$ A point $v \in V$ is called $\theta$-stable if it is $\theta$-semistable, further, $ \operatorname { dim } \mathbf{G} \cdot x = \operatorname { dim } \mathbf{G} / \Delta $ where $\Delta $ is the kernel of the representation, so this condition  means that the stabilizer of $x$ has zero dimention and the action of $ \mathbf{G} $ is closed. The set of $\theta$-stable points in $V$ is denoted by $V^{s}_{\theta}$.
\end{definition}

Since $\mathbf{G}$ is reductive, the graded algebra of $ \mathbf{G} $ invariant sections
\begin{align*}
S(L_{\theta} ) : = \oplus_{n \geq 0 } \Gamma(V, L^n_{\theta} )^{\mathbf{G}}
\end{align*}
is finitely generated, so the associate GIT quotient
\begin{align*}
V//\mathbf{G} := V//_{\theta} \mathbf{G} = \operatorname{Proj} S(L_{\theta} ) = \operatorname{Proj}( \oplus_{n \geq 0 } \Gamma(V, L^n_{\theta} )^{\mathbf{G}} ) 
\end{align*}
is a quasiprojective variety. 
\begin{remark}
Nagata \cite{nagata1963invariants} also showed that if a reductive linear algebraic group $\mathbf{G}$ acts on an affine variety $X$, then the invariant subalgebra is finitely generated 
\end{remark}

Geometrically, this quotient can be viewed as a quotient of open set $ V^{ss}_{\theta} $ under the equivalence relation $ x \sim y $ if and only if $ \overline{\mathbf{G} \cdot x}   $ intersects $ \overline{\mathbf{G} \cdot y} $.

In the following discussion, we assume 

( 1 ) $ \emptyset \neq V ^ { s } = V ^ { s s }  $

( 2 ) $ V ^ { s }$  is nonsingular.   

( 3 ) $ \mathbf{G}$ acts freely on  $V ^ { s } $ 

Under this assumption, the orbits $ {\mathbf{G} \cdot x} $ and $ {\mathbf{G} \cdot y} $ are already closed by stable condition, so $ V^{s} $ is a principal $\mathbf{G}$ bundle over $ V//_{\theta} \mathbf{G} $ in the \'etale topology, hence $ V//_{\theta} \mathbf{G} $ coincides with the quotient stack $ [ V^s /\mathbf{G} ] $. Usually, we use Mumford's Numerical Criterion to find the semistable and stable points:
\begin{proposition}\cite{King94moduliof}
A point $x \in V$ is $ \theta $-semistable if and only if $ \theta(\Delta)={1} $ and every one parameter subgroup $ \lambda $ of $ \mathbf{G} $, for which $ \mathrm{lim}_{t \rightarrow 0} \lambda(t) \cdot x $ exists, satisfies $ \langle \theta , \lambda \rangle \geqslant 0 $. Such a point is $\theta$-stable if and only if the only one-parameter subgroups $\lambda$ of $\mathbf{G}$, for which $ \mathrm{lim}_{t \rightarrow 0} \cdot x $ exists and $ \langle \theta , \lambda \rangle = 0 $, are in $ \Delta $.
\end{proposition}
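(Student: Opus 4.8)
The plan is to derive this from the Hilbert--Mumford--Kempf criterion of affine GIT, following King. The first step is to pass from the $\mathbf{G}$-linearized line bundle $L_{\theta}$ on $V$ to its total space: put $\hat V = V\times\mathbb{C}$ with the $\mathbf{G}$-action $g\cdot(v,z)=(gv,\theta^{-1}(g)z)$ already fixed in the definition of $L_\theta$, and let $Z=V\times\{0\}$ denote the zero section. Since $\mathbb{C}[\hat V]=\bigoplus_{n\geq0}\Gamma(V,L_\theta^n)\,z^n$, a function $f(v)z^n$ is $\mathbf{G}$-invariant exactly when $f$ is a relative invariant of weight $\theta^n$; in particular the weight-zero part of an invariant function on $\hat V$ is nothing but its restriction to $Z$. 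I would then record the reformulation: $x\in V$ is $\theta$-semistable if and only if $\overline{\mathbf{G}\cdot(x,1)}$ is disjoint from $Z$. One direction is immediate, since a relative invariant of positive weight nonvanishing at $x$ gives an invariant function on $\hat V$ that is constant and nonzero on the orbit closure but vanishes on $Z$. For the converse one uses that $\mathbf{G}$ is reductive and $\hat V$ affine, so --- via the Reynolds operator, equivalently the separation of closed orbits by the affine categorical quotient $\operatorname{Spec}\mathbb{C}[\hat V]^{\mathbf{G}}$ (finitely generated, cf.\ the Nagata remark above) --- disjoint closed invariant subsets are separated by an invariant function $F$; subtracting its value on $Z$ kills the weight-zero part, so $F=\sum_{n\geq1}f_n z^n$ with $f_n$ of weight $\theta^n$, and $F(x,1)\neq0$ forces some $f_n(x)\neq0$. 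The analogous reformulation of $\theta$-stability is that, moreover, $\mathbf{G}\cdot(x,1)$ is closed in $\hat V$ and $\operatorname{Stab}_{\mathbf{G}}(x,1)=\operatorname{Stab}_{\mathbf{G}}(x)\cap\ker\theta$ has the same dimension as $\Delta$; matching this against the condition ``orbit closed in $V^{ss}_\theta$ and $\dim\mathbf{G}\cdot x=\dim\mathbf{G}/\Delta$'' is a short compatibility check I would attribute to King.

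The crux is the criterion that for a reductive group acting on an affine variety $Y$, a point $y$, and a closed invariant subset $S\subseteq Y$, one has $\overline{\mathbf{G}y}\cap S\neq\emptyset$ if and only if some one-parameter subgroup $\lambda:\mathbb{C}^*\to\mathbf{G}$ has $\lim_{t\to0}\lambda(t)y$ existing and lying in $S$; and, refining this, $\mathbf{G}y$ fails to be closed precisely when $\lambda$ can be chosen with this limit not in $\mathbf{G}y$ itself. I would cite this rather than reprove it (Mumford's GIT; Kempf, \emph{Instability in invariant theory}; Birkes): the nontrivial direction rests on the uniqueness of the closed orbit in $\overline{\mathbf{G}y}$ together with the existence of an optimal destabilizing one-parameter subgroup. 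Applying it with $Y=\hat V$, $y=(x,1)$, $S=Z$, and using that $V$ is a vector space so that the $\lambda$-weight decomposition makes everything explicit, one gets $\lambda(t)\cdot(x,1)=(\lambda(t)x,\ t^{-\langle\theta,\lambda\rangle})$, hence the limit exists and lies in $Z$ exactly when $\lim_{t\to0}\lambda(t)x$ exists and $\langle\theta,\lambda\rangle<0$. Negating the semistability reformulation gives precisely the first assertion, with the side condition $\theta(\Delta)=1$: for $\delta\in\Delta$ every relative invariant of weight $\theta^n$ satisfies $f(x)=f(\delta x)=\theta^n(\delta)f(x)$, so if $\theta(\Delta)\neq1$ the positive-weight invariants vanish and $V^{ss}_\theta$ is empty for trivial reasons.

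For the stable part, assume $x$ is $\theta$-semistable, so $\overline{\mathbf{G}\cdot(x,1)}\cap Z=\emptyset$. Then for any one-parameter subgroup $\lambda$ with $\lim_{t\to0}\lambda(t)(x,1)$ existing, the coordinate $t^{-\langle\theta,\lambda\rangle}$ must tend to a nonzero limit, which forces $\langle\theta,\lambda\rangle=0$ and the limit to equal $(x_0,1)$ with $x_0=\lim_{t\to0}\lambda(t)x$. By the refined criterion $\mathbf{G}\cdot(x,1)$ is non-closed iff some such $\lambda$ has $(x_0,1)\notin\mathbf{G}\cdot(x,1)$, while a stabilizer $\operatorname{Stab}_{\mathbf{G}}(x,1)$ larger in dimension than $\Delta$ contains a one-parameter subgroup $\lambda\notin\Delta$ lying in $\ker\theta$ and fixing $x$; in both cases one produces a $\lambda\notin\Delta$ with $\lim\lambda(t)x$ existing and $\langle\theta,\lambda\rangle=0$. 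Conversely such a $\lambda$ either breaks closedness of the orbit, or --- in the delicate case that its limit lands back in $\mathbf{G}\cdot(x,1)$ --- can be conjugated into $\operatorname{Stab}_{\mathbf{G}}(x,1)$, witnessing that this stabilizer exceeds $\Delta$; either way $x$ is not $\theta$-stable, which is the second assertion. The main obstacle is the Hilbert--Mumford--Kempf criterion itself, the real content, which I would import as a black box; the secondary technical point is the delicate degeneration-into-the-orbit case just mentioned, and throughout one must keep consistent the sign conventions relating $L_\theta$, the weight of relative invariants, and the pairing $\langle\theta,\lambda\rangle$.
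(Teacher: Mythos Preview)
The paper does not prove this proposition; it is stated with a citation to King \cite{King94moduliof} and used as a black box, followed only by a remark on the notation $\langle\theta,\lambda\rangle$ and an illustrative example on $\mathbb{P}^n$. Your proposal is a correct and complete sketch of King's original argument---lifting to the total space $\hat V=V\times\mathbb{C}$, reformulating (semi)stability via the orbit closure of $(x,1)$ relative to the zero section, and invoking the affine Hilbert--Mumford--Kempf criterion---so there is nothing to compare against except the reference itself, which your write-up faithfully reproduces.
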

\begin{remark}
Here $ \langle \theta , \lambda \rangle $ means that if $ \theta(\lambda(t))= t^m $ the we denote $ \langle \theta , \lambda \rangle = m $.
\end{remark}
\begin{example}{Projective space} 
$\mathbb{P}^n = (\mathbb{C}^{n+1} \setminus \{0 \} )/\mathbb{C}^{*} $ where $ t \cdot (x_0, \cdots , x_n ) = (t x_0, \cdots , t x_n ) $ for $t \in \mathbb{C}^{*}$, $ (x_0, \cdots , x_n ) \in \mathbb{P}^n$. If we take $ \theta : \mathbb{C}^{*} \rightarrow \mathbb{C}^{*} $ to be $ \theta(t) = t $, then the $ \mathbb{C}^{*} $ invariant section of $ L _ { \theta } = \mathbb{C}^{n+1} \times \mathbb { C } _ { \theta } $ is just the homogenous function of degree $n$, then $ \mathbb{C}^{n+1}//_\theta \mathbb{C}^* = \operatorname { Proj } \left( \oplus _ { n \geqslant 0 } \Gamma \left( \mathbb{C}^{n+1} , L _ { \theta } ^ { n } \right) ^ { \mathbb { C }^* } \right) $ which is just the projective space.

Alternatively, we use Mumford's Numerical Criterion to find the stable loci. Since every one parameter subgroup of $ \mathbb{C}^* $ is a map $ \lambda : \mathbb{C}^* \rightarrow \mathbb{C}^* $, such that, $ \lambda(t)=t^m $, for some $ m \in \mathbb{Z} $, so for $ x \in \mathbb{C}^{n+1} $, if $ x \neq 0 $, then $ \lambda(t)(x)=t^mx $ exists if and only if $ m \geq 1 $, but if $ x=0 $, then $ \lambda(t)(x)=t^mx $ always exists for $ \forall m \in \mathbb{Z} $, so $\{0\} \in \mathbb{C}^{n+1} $ is the unstable point, then $ \mathbb{C}^{n+1}//_\theta \mathbb{C}^* =  (\mathbb{C}^{n+1} \setminus \{0 \} )/\mathbb{C}^{*} $.
\end{example}

\subsection{Moduli space of  $\epsilon$-stable quasimaps to $V//_{\theta}\mathbf{G}$}
Let $ (C,x_1, \cdots, x_n ) $ be a prestable pointed genus $g$ curve, then a map from curve $C$ to quotient stack $ [ V/\mathbf{G} ] $ is a combination of following maps
\begin{align*}
C \stackrel { u } { \rightarrow } \mathcal{P} \times _ { \mathbf { G } } V \rightarrow [ V / \mathbf { G } ]
\end{align*}
denoted by a pair $ (\mathcal{P},{u}) $ where $\mathcal{P}$ is a principal $ \mathbf{G} $ bundle on $C$ and $\mathcal{P} \times _ { \mathbf { G } } V$ means the diagonal action (or balanced action) of $ \mathbf{G} $, i.e. $ g \cdot (p,v) = (gp,g^{-1}v ) $. Since $ \operatorname { Pic } ^ { \mathbf { G } } ( V ) = \operatorname { Pic } ( [ V / \mathbf { G } ] ) $, then we define degree of the map as follows
\begin{definition}
The degree $\beta$ of $(\mathcal{P},u)$ is a homomorphism
\begin{align*}
\beta : \operatorname { Pic } ^ { \mathbf { G } } ( V ) \rightarrow \mathbb{Z} ,\ \
\beta(L) = \operatorname{deg}_{C}(u^*(\mathcal{P} \times _ { \mathbf { G } } L ) )
\end{align*}
\end{definition}

\begin{lemma}
If $(C, \mathcal{P}, u)$ is a quasimap, then $\beta(L_\theta) \geq 0$, and $\beta(L_\theta) = 0$ if and only if $\beta = 0$, if and only if the quasimap is a constant map to $V//\mathbf{G}$.
\end{lemma}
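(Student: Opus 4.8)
The plan is to translate the quasimap condition into a positivity statement for line bundles on the curve $C$, using two facts. First, by definition the semistable locus $V^{ss}_\theta$ is the union of the non-vanishing loci of the relative invariants, i.e.\ of the $\mathbf{G}$-invariant sections of the powers $L_\theta^{n}$ with $n\geq 1$. Second, $L_\theta$ descends to the ample generator $\mathcal{O}(1)$ of $V//\mathbf{G}=\operatorname{Proj} S(L_\theta)$, so that for a genuine morphism $f\colon C\to V//\mathbf{G}$ the number $\deg f^*\mathcal{O}(1)$ is nonnegative and vanishes exactly when $f$ contracts $C$.

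To prove $\beta(L_\theta)\geq 0$, decompose $C=\bigcup_i C_i$ into irreducible components; each $C_i$ is an integral projective curve, and by the quasimap condition its generic point maps into $\mathcal{P}\times_{\mathbf{G}}V^{ss}_\theta$, so there is a relative invariant $f_i$ of some weight $n_i\geq 1$ that does not vanish at the image of that generic point. Pulling $f_i$ back along $u$ produces a section $s_i$ of $\big(u^*(\mathcal{P}\times_{\mathbf{G}}L_\theta)\big)^{\otimes n_i}$ that is not identically zero on $C_i$, and since a nonzero section of a line bundle on an integral projective curve forces that bundle to have nonnegative degree, $\deg_{C_i}\big(u^*(\mathcal{P}\times_{\mathbf{G}}L_\theta)\big)\geq 0$. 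Summing over $i$ yields $\beta(L_\theta)\geq 0$.

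For the equivalences, the implications $\beta=0\Rightarrow\beta(L_\theta)=0$ and ``constant map to $V//\mathbf{G}$'' $\Rightarrow\beta=0$ are immediate. Conversely, assume $\beta(L_\theta)=0$. Then each summand $\deg_{C_i}\big(u^*(\mathcal{P}\times_{\mathbf{G}}L_\theta)\big)$, being nonnegative with zero total, must vanish, so $s_i$ is a nonzero section of a degree-$0$ line bundle on the integral projective curve $C_i$ and is therefore nowhere vanishing; that is, $f_i\circ u$ has no zeros on $C_i$. Since $\{f_i\neq 0\}\subset V^{ss}_\theta$, this gives $u(C_i)\subset\mathcal{P}\times_{\mathbf{G}}V^{ss}_\theta$, and as this holds for every $i$ the quasimap has no base points; hence $u$ descends to a morphism $\bar u\colon C\to V//\mathbf{G}$ with $\deg\bar u^*\mathcal{O}(1)=\beta(L_\theta)=0$. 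Ampleness of $\mathcal{O}(1)$ then forces $\bar u(C)$ to be $0$-dimensional, and since $C$ is connected $\bar u$ is constant; in particular $\bar u^*L$ is trivial for every $L\in\operatorname{Pic}^{\mathbf{G}}(V)$, i.e.\ $\beta=0$. This closes the cycle of implications.

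The only delicate point is the step from $\beta(L_\theta)=0$ to the absence of base points: it must be carried out componentwise, invoking that a nonzero section of a degree-$0$ bundle on an integral projective curve is nowhere zero and that the non-vanishing locus of any relative invariant is already contained in $V^{ss}_\theta$. Everything else is formal once one uses that $L_\theta$ descends to the ample $\mathcal{O}(1)$ on $\operatorname{Proj} S(L_\theta)$ and that an ample line bundle has strictly positive degree on any curve it does not contract.
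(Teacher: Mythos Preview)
The paper states this lemma without proof in its review section; it is a standard fact quoted from the foundational quasimap literature (Ciocan-Fontanine--Kim--Maulik), so there is no argument in the paper to compare against.

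Your proof is correct and is essentially the standard one. One small imprecision worth tightening: in the last step you write ``$\bar u^*L$ is trivial for every $L\in\operatorname{Pic}^{\mathbf G}(V)$'', but $\bar u$ is a map to $V//\mathbf G$, so strictly speaking $\bar u^*$ acts on $\operatorname{Pic}(V//\mathbf G)$. What makes the conclusion $\beta=0$ go through is that, once there are no base points, the datum $(\mathcal P,u)$ is the pullback along $\bar u$ of the universal principal $\mathbf G$-bundle $V^s\to V//\mathbf G$; hence for any $L\in\operatorname{Pic}^{\mathbf G}(V)$ one has $u^*(\mathcal P\times_{\mathbf G}L)\cong\bar u^*(L|_{V^s}/\mathbf G)$, and constancy of $\bar u$ gives degree zero. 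Equivalently, constancy forces $\mathcal P$ to be trivial, whence every $\mathcal P\times_{\mathbf G}\mathbb C_\chi$ has degree zero. With that clarification the argument is complete.
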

We call $\beta \in \mathrm { Hom }_{\mathbb{Z}} \left( \mathrm { Pic } ^ { \mathrm { G } } ( W ) , \mathbb { Z } \right)$ which realized as classes of quasimaps to $V / /_\theta \mathbf { G }$ the $L_{\theta}$-effective classes, they form a semigroup, denoted $\operatorname { Eff } ( V , \mathbf { G } , \theta )$.

\begin{definition}
An n-pointed, genus $g$ quasimap of class $\beta$ to $ V//\mathbf{G} $ consists of the data $ ((C,x_1, \cdots, x_n), \mathcal{P}, u ) $, besides, there is a finite set $B \subset C$ called base points, such that $ \forall p \in C \backslash B $, $ u(p) \in V^s $. 

We call a quasimap prestable, if the base points are away from nodes and markings on $C$, and stable, if the line bundle $\omega _ { C } \left( \sum _ { i = 1 } ^ { n } x _ { i } \right) \otimes \mathcal { L } ^ { \epsilon }$ is ample for every rational number $\epsilon > 0 $, where
$$  \mathcal { L } := u ^ { * } \left( \mathcal{P} \times _ { \mathbf { G } } L _ { \theta } \right) \cong \mathcal{P} \times _ { \mathbf { G } } \mathbb { C } _ { \theta }   $$ 

We say two quasimaps $\left(( C , x _ { 1 } , \ldots , x _ { n }) , \mathcal{P} , u \right)$ and $\left( (C ^ { \prime } , x _ { 1 } ^ { \prime } , \ldots , x _ { n } ^ { \prime } ), \mathcal{P} ^ { \prime } , u ^ { \prime } \right)$ are isomorphism if there are two isomorphisms $ f : C \rightarrow C^{\prime}$ and $ \sigma : \mathcal{P} \rightarrow f ^ { * } \mathcal{P} ^ { \prime } $ such that
$$  f \left( p _ { j } \right) = p _ { j } ^ { \prime } ,\ \ \sigma _ { V } ( u ) = f ^ { * } \left( u ^ { \prime } \right)  $$  
where $\sigma _ { V } : \mathcal{P} \times _ { \mathbf { G } } V \rightarrow \mathcal{P} ^ { \prime } \times_{ \mathbf { G }} V^{ \prime } $ is the isomorphism of bundles induced by $ \sigma $.
\end{definition}
\begin{remark}[Remark]
(1) For a stable quasimap, every rational component of curve $C$ should have at least two markings or nodes and $\operatorname{deg}(\mathcal{L}) > 0$. \\
(2) The automorphism group of stable quasimap is finite and reductive. 
\end{remark}

Given a prestable quasimap $ \left( \left( C , x _ { 1 } , \ldots , x _ { n } \right) , \mathcal{P} , u \right) $ to $ V//\mathbf{G} $ we define the contact order of $ u(C) $ with the unstable closed subscheme $\mathcal{P} \times _ \mathbf{ G } V ^ { u s }$ at $ u(x) $ as
\begin{definition}
The length $l(x)$ of a point $ x \in C $ is 
$$ l ( x ) : = \operatorname { length } _ { x } \left( \operatorname { coker } \left( u ^ { * } \mathcal { J } \rightarrow \mathcal { O } _ { C } \right) \right) $$
where $ \mathcal { J } $ is the ideal sheaf of $ \mathcal{P} \times _ \mathbf{ G } V ^ { u s } $.
\end{definition} 

Using $ l(x) $ we can define a new stability condition called $ \epsilon$-stability. 
\begin{definition}
Given a positive rational number $ \epsilon $, a quasimap $ \left( \left( C , x _ { 1 } , \ldots , x _ { n } \right) , \mathcal{P} , u \right) $ is called $ \epsilon $-stable if 

 (1) $ \omega _ { C } \left( \sum _ { i = 1 } ^ { k } x _ { i } \right) \otimes \mathcal { L } _ { \theta } ^ { \epsilon }$ is ample.
 
 (2) $ \epsilon l ( x ) \leq 1 $ for every point $x \in C$.
\end{definition}

\begin{remark}[Remark]
Notice that the length $ l(x) $ is an integer, so when $ \epsilon $ is big enough, every point in $C$ will be sent to $ V^{s} $.
\end{remark}
  
\begin{definition} 
The moduli stack of n-pointed $ \epsilon $-stable quasimaps of genus $g$ and degree $ \beta $ to $ V / / \mathbf { G } $ denoted by $ \mathcal { Q }^{ \epsilon } _ { g , k } ( V / / \mathbf { G } , \beta ) $ consists of the following data
$$  \mathcal { Q } ^ { \epsilon } _ { g , k } ( V / / \mathbf { G } , \beta )(S) = \left\{ \pi : \mathcal { C }^{ \epsilon } \rightarrow S , \left\{ x _ { i } : S \rightarrow \mathcal { C }^{ \epsilon } \right\} _ { i = 1 , \ldots , n } , \mathcal { P } , u \right\} $$
where $\pi$ is a flat morphism from family of curves $\mathcal { C }^{\epsilon}$ to base scheme $S$, $p_i$ are sections of $\pi$, $\mathcal { P }$ is the principal $\mathbf{G}$-bundle on $\mathcal { C }^{\epsilon}$ and $u : \mathcal { C }^{\epsilon} \longrightarrow \mathcal { P } \times _ { \mathbf { G } } V $ is a section, such that the restriction to every geometric fiber $\mathcal { C }^{\epsilon}_s$ of $\pi$ is a $\epsilon$-stable n-pointed quasimap of genus $g$ and class $\beta$.
\end{definition}
\begin{theorem}
The stack $ \mathcal { Q }^{ \epsilon } _ { g , k } ( V / / \mathbf { G } , \beta ) $  is a separated Deligne--Mumford stack of finite type, admitting a canonical obstruction theory. If W has at most lci singularities, then the obstruction theory is perfect.
\end{theorem}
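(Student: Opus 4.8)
The plan is to follow Ciocan-Fontanine--Kim--Maulik and build everything relative to two smooth Artin stacks: the stack $\mathfrak{M}_{g,n}$ of $n$-pointed prestable genus $g$ curves, and the relative stack $\mathfrak{Bun}_{\mathbf{G}}\to\mathfrak{M}_{g,n}$ whose objects are such a curve together with a principal $\mathbf{G}$-bundle $\mathcal{P}$ on it; both are classically smooth and locally of finite type, since deformations of prestable curves and of $\mathbf{G}$-bundles on a curve are unobstructed. Over $\mathfrak{Bun}_{\mathbf{G}}$ we have the universal curve $\pi:\mathcal{C}\to\mathfrak{Bun}_{\mathbf{G}}$ and the associated fiber bundle $\mathcal{P}\times_{\mathbf{G}}V\to\mathcal{C}$, which is affine; a quasimap datum is exactly a section $u$ of this bundle, so the stack $\mathfrak{Q}$ of all prestable quasimaps is the relative section space $\underline{\mathrm{Sect}}_{\mathcal{C}/\mathfrak{Bun}_{\mathbf{G}}}(\mathcal{P}\times_{\mathbf{G}}V)$, which is representable and affine over $\mathfrak{Bun}_{\mathbf{G}}$. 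Decomposing by the degree homomorphism $\beta$ gives, for each $\beta$, an algebraic stack $\mathfrak{Q}_{g,n}(V//\mathbf{G},\beta)$ locally of finite type.

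First I would check that $\epsilon$-stability is open: ampleness of $\omega_{\mathcal{C}}(\sum x_i)\otimes\mathcal{L}_\theta^{\epsilon}$ on fibers is an open condition, and $x\mapsto l(x)$ is upper semicontinuous in families, so $\epsilon\,l(x)\le 1$ also cuts out an open locus; hence $\mathcal{Q}^{\epsilon}_{g,n}(V//\mathbf{G},\beta)$ is an open substack. The step I expect to be the real obstacle is boundedness, i.e.\ showing this open substack is of \emph{finite} type. One must simultaneously bound the combinatorial type of $C$ and the degree of $u$: the ampleness condition forces every rational component to carry positive $\mathcal{L}_\theta$-degree or at least two nodes/markings, each base point contributes length at most $1/\epsilon$, and the total degree of $u$ is controlled against $\beta$ using that $V$ is a linear representation (so $\mathcal{P}\times_{\mathbf{G}}V$ acquires a filtration by line bundles after pulling back to the maximal torus). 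Feeding the resulting finite list of discrete invariants into the fact that $\mathfrak{M}_{g,n}$ restricted to a bounded combinatorial type is of finite type gives the claim.

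For the Deligne--Mumford and separatedness properties: by the Remark above, an $\epsilon$-stable quasimap has no rational component with fewer than two special points and zero degree, which makes the automorphism group scheme finite and unramified, so the stack is DM. Separatedness is the valuative criterion over a DVR $R$ with fraction field $K$: given two $\epsilon$-stable quasimaps over $\mathrm{Spec}\,K$ that agree there, one first matches the underlying prestable curves by separatedness of $\mathfrak{M}_{g,n}$, then the $\mathbf{G}$-bundles by the analogous statement for $\mathfrak{Bun}_{\mathbf{G}}$ along one-parameter families, and finally the sections $u$: since $V$ is affine the section extends uniquely across codimension $\ge 2$ by a Hartogs argument, and the $\epsilon$-stability normalization --- positions and lengths of base points, and any contracted rational tails --- is uniquely pinned down by the generic quasimap, so the two limits coincide.

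Finally, relative to the smooth stack $\mathfrak{Bun}_{\mathbf{G}}$, the deformations and obstructions of the section $u$ are governed by $R\pi_*$ of the pullback under $u$ of the relative tangent complex of $[V/\mathbf{G}]\to B\mathbf{G}$; for a vector space $V$ this pullback is a vector bundle on $\mathcal{C}$, its derived pushforward along $\pi$ (relative dimension one) is perfect with cohomology only in degrees $0,1$, and its dual is a relative perfect obstruction theory of amplitude $[-1,0]$, which composes with the smooth morphism $\mathfrak{Bun}_{\mathbf{G}}\to\mathrm{pt}$ to the asserted canonical absolute obstruction theory. In the generality where $V$ is replaced by an affine $W$, the relative cotangent complex of $[W/\mathbf{G}]\to B\mathbf{G}$ has perfect amplitude in $[-1,0]$ exactly when $W$ is lci, and $R\pi_*$ of such a complex along a family of curves stays perfect; checking this perfectness and the compatibility of the local presentations in the singular case is the only extra point needed for the refinement.
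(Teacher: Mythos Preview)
Your proposal is a faithful sketch of the Ciocan-Fontanine--Kim--Maulik argument, and the steps you outline (relative construction over $\mathfrak{Bun}_{\mathbf{G}}$, openness of $\epsilon$-stability, boundedness via control of the combinatorial type and degrees, finiteness of automorphisms for the DM property, valuative criterion for separatedness, and the relative obstruction theory from $R\pi_*$ of the tangent complex) are the correct ones. There is no genuine gap in your outline, though as you acknowledge the boundedness step is where the real work lies.

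However, you should be aware that the paper you are working from does \emph{not} prove this theorem at all: it appears in Section~2, which is explicitly a review section (``This section mainly follows \cite{manolache2014stable} \cite{2018arXiv180406552R} \cite{2016arXiv160206494T}''), and the theorem is simply quoted from the literature without proof. So there is no ``paper's own proof'' to compare against; your proposal is essentially a reconstruction of the original CKM argument, which is exactly the source being cited. In that sense your approach is not merely ``the same as the paper's'' but rather the proof the paper is implicitly invoking.
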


\subsection{Quasimap graph space}
Now we can define quasimap graph space, here we only consider the special case in which we can define I-function. Let's take $g=0$ and parametrized component $D=\mathbb{P}^1$.
\begin{definition} 
The moduli stack $ \mathcal { QG }^{\epsilon} _ { 0 , n } ( V // \mathbf { G } , \beta ) $ of n-pointed stable quasimaps of genus 0 and degree $\beta$ to $ V//\mathbf{G}$ with parametrized component $\mathbb{P}^1$ consists of the data
$$ \mathcal { QG }^{\epsilon} _ { 0 , n } ( V // \mathbf { G } , \beta )(S) = \left\{ \pi : \mathcal { C }^{\epsilon} \rightarrow S , \left\{ x _ { i } : S \rightarrow \mathcal { C }^{\epsilon} \right\} _ { i = 1 , \ldots , n } , \mathcal { P } , u, \varphi \right\}  $$
here the only new data is $ \varphi : \mathcal{C}^{\epsilon} \rightarrow \mathbb{P}^1$ such that on every geometric fiber  $\mathcal{C}^{\epsilon}_s = C$ , $ \varphi_s : C \rightarrow \mathbb{P}^1$ is a regular map such that $ \varphi _ { * } [ C ] = [ \mathbb{P}^1 ] $. The stability condition changes to require the line bundle 
$$ \omega _ { C } \left( x _ { 1 } + \cdots + x _ { n } \right) \otimes \mathcal { L } ^ { \epsilon } \otimes \varphi ^ { * } \left( \omega _ { \mathbb{P}^{1} } ^ { - 1 } \otimes \mathcal { M } \right) $$
is ample for $ \forall \epsilon >0 $, where $ \mathcal { M } $ is any ample line bundle on $ \mathbb{P}^1 $. Equivalently, it requires no condition on distinguish component $C_0$ of $C$ that $ C_0 \cong \mathbb{P}^1$. And
$$ \epsilon l ( x ) \leq 1  $$
holds for every point $x \in C$.
\end{definition}
\begin{remark}[Remark]
When $ \epsilon >2 $, we have 
\begin{align*}
\mathcal{Q} _ { g , k } ^ { \infty } ( V / / \mathbf { G } , \beta )   &= \overline { M } _ { g , k } ( V / / \mathbf { G } , \beta ) \\ 
\mathcal{Q G} _ { g , k  } ^ { \infty } ( V / / \mathbf { G }, \beta ) &= \overline { M } _ { g , k } \left( V / / \mathbf { G } \times \mathbb { P } ^ { 1 } , ( \beta , 1 ) \right)
\end{align*}
When $ \epsilon=0^+ $, we get the $\mathcal{Q} _ { g , k } ^ { 0^+ } ( V / / \mathbf { G } , \beta )$ and $ \mathcal{Q G} _ { g , k  } ^ { 0^+ } ( V / / \mathbf { G }, \beta ) $ which are the same as the original ones defined in \cite{manolache2014stable}.
\end{remark}

There are two natural forgetful maps 
\begin{align*}
& \mu : \mathcal{QG} _ { 0 , n } ^ { \epsilon } ( V / / \mathbf{G} , \beta ) \rightarrow \widetilde{\mathbb{P}^1[n]}    \\
& \nu : \mathcal{QG} _ { 0 , n } ^ { \epsilon } ( V / / \mathbf{G} , \beta ) \rightarrow \mathfrak { B } \mathfrak { u } \mathfrak { n } _ { G }
\end{align*}
where $\widetilde{\mathbb{P}^1[n]}$  is the Fulton-MacPherson space of (not necessarily stable) configurations of k distinct points on $\mathbb{P}^1$ and $ \mathfrak { B } \mathfrak { u } \mathfrak { n } _ { G } $ is the relative moduli stack of principal $ \mathbf{G} $-bundle on fibers of universal curve $\mathfrak { C } _ { g , k } \rightarrow \mathfrak { M } _ { g , k }$. They are both smooth Artin stack, locally of finite type.
We have an Euler sequence
\begin{align*}
0 \longrightarrow \mathfrak { P } \times _ { \mathbf { G } } \mathfrak { g } \longrightarrow \mathfrak { P } \times _ { \mathbf { G } } V \longrightarrow \mathcal { F } \longrightarrow 0
\end{align*}
on the universal curve
\begin{align*}
\pi : \mathfrak { C }^{\epsilon} \rightarrow \mathcal { QG }^{\epsilon} _ { 0 , n } ( V // \mathbf { G } , \beta )
\end{align*}
where $ \mathfrak { P } $ is the universal principal $ \mathbf{G} $-bundle on universal curve $ \mathcal { C }^{\epsilon} $. Then the $ \mu $-relative obstruction theory is given by
\begin{align*}
\left( R ^ { \bullet } \pi _ { * } \mathcal { F } \right) ^ { \vee }
\end{align*}
and the $\nu$-relative obstruction theory is given by
\begin{align*}
\left( R ^ { \bullet } \pi _ { * } \left( \mathfrak { P } \times _ { \mathbf { G } } V \right) \right) ^ { \vee }
\end{align*}
\begin{theorem}\cite{2013arXiv1304.7056C}\cite{2016arXiv160206494T} 
The stack $ \mathcal { QG }^{\epsilon} _ { 0 , n } ( V // \mathbf { G } , \beta ) $ is a Deligne-Mumford stack, separated, of finite type, and the $\nu$-relative obstruction theory constructed above is perfect for all $ \epsilon $ and $ \mu $-relative obstruction theory is perfect when $ \epsilon=0^+ $. 
\end{theorem}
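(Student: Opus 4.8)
The statement to be proved is Theorem 2.5 (the final one): the quasimap graph space $\mathcal{QG}^\epsilon_{0,n}(V//\mathbf{G},\beta)$ is a separated Deligne--Mumford stack of finite type, its $\nu$-relative obstruction theory is perfect for all $\epsilon$, and its $\mu$-relative obstruction theory is perfect when $\epsilon = 0^+$. Since this is attributed to Ciocan-Fontanine--Kim and Tseng--You, the ``proof'' here should be an outline of how that goes.

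\bigskip

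The plan is to proceed in three stages, paralleling the development for ordinary quasimaps in Theorem 2.2 and transporting it across the two forgetful maps $\mu$ and $\nu$ introduced above. First I would establish that $\mathcal{QG}^\epsilon_{0,n}(V//\mathbf{G},\beta)$ is an algebraic stack of finite type. The key observation is that the map $\nu$ lands in the smooth Artin stack $\mathfrak{Bun}_\mathbf{G}$ (locally of finite type) and, after further remembering the map $\varphi$ to $\mathbb{P}^1$, one gets a morphism to $\mathfrak{Bun}_\mathbf{G}\times_{\mathfrak{M}_{0,n}}\widetilde{\mathbb{P}^1[n]}$ whose fiber over a fixed curve, bundle, and stabilizing map is an open substack of the linear space of sections $\pi_*(\mathfrak{P}\times_\mathbf{G}V)$ --- the openness being cut out by the quasimap condition (generic point maps to $V^s$) together with the length condition $\epsilon\,l(x)\le 1$ and ampleness of the twisted log-canonical bundle. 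One shows these are open (resp. constructible, then closed-complement) conditions exactly as in the non-graph case, so $\mathcal{QG}^\epsilon_{0,n}$ is a locally closed substack of a cone over a finite-type base, hence algebraic and of finite type once boundedness of the relevant numerical data is checked.

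\bigskip

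Second, I would prove the Deligne--Mumford and separatedness properties. For Deligne--Mumford it suffices to show automorphism groups are finite and reduced; this follows from the stability condition (part (1) of the $\epsilon$-stability definition in the graph-space version), since an automorphism must fix the map $\varphi$ up to the action on the distinguished component $C_0\cong\mathbb{P}^1$ and must preserve the ample line bundle $\omega_C(\sum x_i)\otimes\mathcal{L}^\epsilon\otimes\varphi^*(\omega_{\mathbb{P}^1}^{-1}\otimes\mathcal{M})$, leaving only finitely many choices, with no infinitesimal automorphisms because sections of the relevant negative bundle vanish. Separatedness I would get from the valuative criterion: given a family over a punctured curve (DVR spectrum), one extends the prestable curve, then the bundle $\mathcal{P}$, then the section $u$, using properness of $\mathfrak{Bun}_\mathbf{G}$-type extension statements and the fact that the length/ampleness conditions are closed, with uniqueness of the extension coming from the stability normalization exactly as in Ciocan-Fontanine--Kim--Maulik; the presence of $\varphi$ only rigidifies further and does not obstruct this.

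\bigskip

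Third, and this is where the real content lies, I would construct the relative obstruction theories and prove perfectness. For $\nu$, the universal Euler sequence $0\to\mathfrak{P}\times_\mathbf{G}\mathfrak{g}\to\mathfrak{P}\times_\mathbf{G}V\to\mathcal{F}\to 0$ on the universal curve $\pi:\mathfrak{C}^\epsilon\to\mathcal{QG}^\epsilon_{0,n}$ shows that deformations/obstructions of $u$ relative to $(\text{curve},\mathcal{P})$ are governed by $R^\bullet\pi_*(\mathfrak{P}\times_\mathbf{G}V)$; since $u$ is a section of an affine-type bundle over a curve, this complex has amplitude $[0,1]$, so $(R^\bullet\pi_*(\mathfrak{P}\times_\mathbf{G}V))^\vee$ is perfect of amplitude $[-1,0]$ for every $\epsilon$ --- the graph structure plays no role here because $\varphi$ is part of the base data for $\nu$. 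For $\mu$, the fiber direction instead carries the bundle $\mathcal{P}$ and the section $u$ together (the curve and points being fixed by $\widetilde{\mathbb{P}^1[n]}$), and the relevant complex is $(R^\bullet\pi_*\mathcal{F})^\vee$. The main obstacle is precisely showing this is perfect of the right amplitude only when $\epsilon=0^+$: one needs $R^{\ge 1}\pi_*\mathcal{F}$ to behave well, i.e.\ $H^1$ of the deformation complex to be captured in degree $1$, and this can fail on the distinguished component $C_0$ for larger $\epsilon$ where extra base points/length phenomena and the interplay with $\varphi^*$ twists spoil the two-term structure. I would handle this by the local-to-global spectral sequence on the curve, reducing to vanishing of $R^2$ (automatic for curves) and controlling $R^1$ via the stability condition at $\epsilon=0^+$, citing the detailed analysis in \cite{2013arXiv1304.7056C} and \cite{2016arXiv160206494T}; the upshot is compatibility of the $\mu$- and $\nu$-theories with a common absolute obstruction theory, which is what downstream virtual-class and localization computations require.
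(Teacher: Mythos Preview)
The paper does not prove this theorem: it is quoted from \cite{2013arXiv1304.7056C} and \cite{2016arXiv160206494T} with no accompanying argument, so your outline already goes well beyond what the paper offers. The three-stage architecture you propose (algebraicity and boundedness via the map to $\mathfrak{Bun}_\mathbf{G}$ and the section space; Deligne--Mumford and separatedness from the stability condition and the valuative criterion; then the two relative obstruction theories) matches how the cited references actually proceed.

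One correction on the third stage: your diagnosis of why the $\mu$-relative theory is only known to be perfect at $\epsilon=0^+$ is not quite right. The amplitude of $(R^\bullet\pi_*\mathcal{F})^\vee$ is never the problem --- $\mathcal{F}$ lives on a family of nodal curves, so $R^{\ge 2}\pi_*$ vanishes automatically and the complex is always two-term, regardless of base points or $\varphi^*$-twists on the distinguished component. The actual constraint is on the \emph{base} of $\mu$: for $\epsilon>0^+$ the stability condition permits rational components carrying a single special point (a node) and positive degree, so the underlying pointed curve is not semistable and the forgetful map $\mu$ to the smooth Artin stack $\widetilde{\mathbb{P}^1[n]}$ of semistable parametrized configurations is not available in the form needed to compare $(R^\bullet\pi_*\mathcal{F})^\vee$ with the relative cotangent complex. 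At $\epsilon=0^+$ every component has at least two special points, the domain is semistable, $\mu$ lands in a smooth base, and the standard argument identifying $(R^\bullet\pi_*\mathcal{F})^\vee$ with a perfect relative obstruction theory goes through.
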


\subsection{Fixed loci of $\mathbb{C}^*$-action on graph spaces}
There is a natural $\mathbb{C}^*$ action on $ \mathcal { QG }^{\epsilon} _ { 0 , n } ( V // \mathbf { G } , \beta ) $, i.e. the $\mathbb{C}^*$ action on the distinguish component $\mathbb{P}^1$ by $t \left[ x _ { 0 } , x _ { 1 } \right] = \left[ t x _ { 0 } , x _ { 1 } \right] , \forall t \in \mathbb { C } ^ { * }$, let $0=[1,0]$, denoted by $q$ the weight of cotangent bundle over $0$. So the $ \mathbb{C}^* $ fixed loci should be the $\epsilon$-stable parametrized quasimaps that all the base points, markings, nodes and degree $ \epsilon $ should support on $0$ and $ \infty $, and the section $u$ send $ \mathbb{P}^1 $ to a point in $ V//\mathbf{G} $, i.e.
\begin{align*}
\left( \mathcal{Q G} _ { 0 , n , \beta } ^ { \epsilon } ( V / / \mathbf { G } ) \right) ^ { \mathbb { C } ^ { * } } = \bigsqcup F _ { 0 , n _ { 2 } , \beta _ { 2 } } ^ { 0 , n _ { 1 } , \beta _ { 1 } }
\end{align*}
the union over all possible splittings
\begin{align*}
n = n _ { 1 } + n _ { 2 } , \quad \beta = \beta _ { 1 } + \beta _ { 2 }
\end{align*}
where 
\begin{align*}
F _ { 0 , n _ { 2 } , \beta _ { 2 } } ^ { 0 , n _ { 1 } , \beta _ { 1 } } \cong  \mathcal{Q} _ { 0 , n_1+{\bullet} , \beta_{1} } ^ { \epsilon } ( V / / \mathbf { G } )\times_{V//\mathbf{G}} \mathcal{Q} _ { 0 , n_2+{\bullet} , \beta_{2} } ^ { \epsilon } ( V / / \mathbf { G } )
\end{align*}
is the fiber product over the evaluation map $ ev_{\bullet} $ at the special point $ \bullet $ when the moduli spaces are meaningful. Under this identification, the inclusion 
\begin{align*}
i: F _ { 0 , n _ { 2 } , \beta _ { 2 } } ^ { 0 , n _ { 1 } , \beta _ { 1 } } \rightarrow \mathcal{Q G} _ { 0 , n , \beta } ^ { \epsilon } ( V / / \mathbf { G } )
\end{align*}
could be described as follows, we glue two $ \epsilon $-stable quasimaps at $0$ and $ \infty $ to the constant map $ \mathbb{P}^1 \rightarrow p=ev_{\bullet}(\bullet) \in V//\mathbf{G} $.

Since $ \mathcal{Q G} _ { 0,1,0 } \left( V //( \mathbf { G } ) \cong V / / \mathbf { G } \times \mathbb { P } ^ { 1 }\right) $ with fixed point loci $V//\mathbf{G} \times \{0 \}$ and $V//\mathbf{G} \times \{ \infty \}$ and $ \operatorname{Q G} _ { 0,0,0 } ( V / / \mathbf { G } ) \cong V / / \mathbf { G } $ with trivial $\mathbb{C}^*$ action, then the unstable conditions $(0,n_1,\beta_1) = (0,1,0)$ or $(0,n_1,\beta_1) = (0,0,0)$ (similar for $(0,n_2,\beta_2)$) when $ \epsilon = \infty $ is included by
\begin{align*}
\mathcal{Q} _ { 0,0 + \bullet } ^ { \epsilon } ( V / / \mathbf { G } , 0 ) : = V / / \mathbf { G },\ \ \mathcal{Q} _ { 0,1 + \bullet } ^ { \epsilon } ( V / / \mathbf { G } , 0 ) : = V / / \mathbf { G }, \ \ ev_{\bullet} := id_{V//\mathbf{G}}
\end{align*}
However, there are more unstable conditions when $ \epsilon=0^+ $, since the stability conditions require the rational component having at least two marked points in this case, so condition $  (0,n_1,\beta_1)=(0,0,\beta_1) $ and $ \epsilon \beta_1 \leq 1 $ is unstable, we denote by
\begin{align*}
\operatorname{F}_{0, \beta} := \mathcal{Q} _ { 0,0 + \bullet } ( V / / \mathbf { G } , \beta ) _ { 0 }
\end{align*}
the moduli space parametrizing the quasimaps of class $\beta$
\begin{align*}
\left( \mathbb { P } ^ { 1 } , P , u \right)
\end{align*}
with $P$ a principal $\mathbf{G}$-bundle on $\mathbb{P}^1$, $u : \mathbb{P}^1 \rightarrow P \times_{\mathbf{G}} V $ a section such that $u(x) \in V^s$ for $x \neq 0 \in \mathbb{P}^1$ and $ 0 \in \mathbb{P}^1$ is a base point of length $\beta(L_\theta)$. Similarly, we have $ \operatorname{Q} _ { 0,0 + \bullet } ( V / / \mathbf { G } , \beta ) _ { \infty }$. And we define the 
\begin{align*}
e v _ { \bullet } \left( \left( \mathbb { P } ^ { 1 } , P , u \right) \right) = u _ { r e g } \left( \mathbb { P } ^ { 1 } \right)
\end{align*}
where the $ u_{reg} $ is the constant map from $ \mathbb{P}^1 \setminus \{0 \} $ to $ V//\mathbf{G} $. Then
we conclude

When $ k \geq 1 $ and $ \epsilon \leq \frac{1}{\beta_{1}(L_{\theta})} $,
\begin{align*}
F _ { 0 , n , \beta _ { 2 } } ^ { 0,0 , \beta _ { 1 } } \cong \mathcal { Q } _ { 0,0 + \bullet }  \left( V / / \mathbf { G } , \beta _ { 1 } \right) _ { 0 } \times _ { V // \mathbf { G } } \mathcal { Q } _ { 0 , n + \bullet } ^ { \epsilon } \left( V / / \mathbf { G } , \beta _ { 2 } \right)
\end{align*}

When $ k \geq 1 $ and $ \epsilon \leq \frac{1}{\beta_{2}(L_{\theta})} $,
\begin{align*}
F ^ { 0 , n , \beta _ { 1 } } _ { 0,0 , \beta _ { 2 } } \cong \mathcal { Q } _ { 0 , n + \bullet } ^ { \epsilon } \left( V / / \mathbf { G } , \beta _ { 1 } \right)
  \times _ { V // \mathbf { G } } \mathcal { Q } _ { 0,0 + \bullet }  \left( V / / \mathbf { G } , \beta _ { 2 } \right) _ { \infty }
\end{align*}

When $ k=g=0 $ and $ \epsilon \leq \min \left\{ \frac { 1 } { \beta _ { 1 } \left( L _ { \theta } \right) } , \frac { 1 } { \beta _ { 2 } \left( L _ { \theta } \right) } \right\} $,
\begin{align*}
F _ { 0 , 0 , \beta _ { 2 } } ^ { 0,0 , \beta _ { 1 } } \cong \mathcal { Q } _ { 0,0 + \bullet }  \left( V / / \mathbf { G } , \beta _ { 1 } \right) _ { 0 }  \times _ { V // \mathbf { G } } \mathcal { Q } _ { 0,0 + \bullet }  \left( V / / \mathbf { G } , \beta _ { 2 } \right) _ { \infty }
\end{align*}

\subsection{Level structure and $ \mathcal{J}^{R,l,\epsilon} $-function}
Here we follow the notation of \cite{2018arXiv180406552R}, first, recall the definition of the level structure and then use it to define the permutation-equivariant quasi-map K-theory invariants with level structure.

Let $\mathcal{X}$ be a Deligne-Mumford stack, and $ \mathcal{F}^{\bullet} $ be a complex of coherent sheaves on $\mathcal{X}$ which has a bounded locally free resolution, i.e., there exists a bounded complex of locally free, finitely generated $ \mathcal{O}_{\mathcal{X}} $ modules $ \mathcal{G}^{\bullet} $ and a quasi-isomorphism
\begin{align*}
\mathcal{G}^{\bullet} \rightarrow \mathcal{F}^{\bullet}
\end{align*}
the determinant line bundle associated to $ \mathcal{F}^{\bullet} $ is 
\begin{align*}
\operatorname{det} := \otimes_n \operatorname{det}(\mathcal{G}^n)^{(-1)^n}
\end{align*}
where $ \operatorname { det } ( \mathcal { E } ) : = \bigwedge ^ { \operatorname { rank } ( \mathcal { E } ) } \mathcal { E } $ for locally free sheaf $ \mathcal{E} $.
\begin{definition}
Given a finite dimensional representation $R$ of $\mathbf{G}$, we define the level $l$ determinant line bundle over  $ \operatorname { Q }^{ \epsilon } _ { g , n } ( V / / \mathbf { G } , \beta ) $ as
\begin{align*}
\mathcal { D } ^ { R , l } : = \operatorname { det } ^ { - l } R^{\bullet} \pi _ { * } \left( \mathfrak { P } \times _ { \mathbf{G} } R \right)
\end{align*}
here $ \pi : \mathcal { C } ^ { \epsilon } \rightarrow \mathrm { Q } _ { g , n } ^ { \epsilon } ( V / / \mathbf { G } , \beta )  $ and $ \operatorname { det } ^ { - l } ( \cdot ) : = \operatorname { det } ( \cdot ) ^ { - l } $ denotes the l-th power of the inverse of the determinant line bundle. Similarly, we can define level $ l $ determinant line bundle over  $ \mathcal { QG }^{ \epsilon } _ { g , n } ( V / / \mathbf { G } , \beta ) $, also denoted by $ \mathcal { D } ^ { R , l } $.
\end{definition}

Let $\Lambda$ be a $\lambda$-algebra, an algebra over $\mathbb{Q}$ with abstract Adams operator $\Psi^m : \Lambda \rightarrow \Lambda $, $m=1,2,\cdots$, such that, $\Psi ^ { r } \Psi ^ { s } = \Psi ^ { r s }$ and $\Psi^1=id$, we will assume $\Lambda$ containing Novikov variables. Define the loop space 
\begin{align*}
\mathcal { K } : = \left[ K ^ { 0 } ( V//\mathbf{G}) \otimes \Lambda \right] \otimes \mathbb { C } ( q ) 
\end{align*} 
there is a symplectic form $\Omega $ on $\mathcal{K}$ given by
\begin{align*}
\Omega ( f , g ) : = \left[ \operatorname { Res } _ { q = 0 } + \operatorname { Res } _ { q = \infty } \right] \left( f ( q ) , g \left( q ^ { - 1 } \right) \right) ^ { R , l } \frac { d q } { q }
\end{align*}
where $f , q \in \mathcal { K }$, and the pairing $( \ \ , \ \ )^{R,l}$ is the twisted pairing [7]. With respect to $\Omega$, there is a Lagrangian polarization  
\begin{align*}
\mathcal { K } = \mathcal { K } _ { + } \oplus \mathcal { K } _ { - }
\end{align*}
where
\begin{align*}
& \mathcal { K } _ { + } = \left[ K^0 ( V / / \mathbf{G} ) \otimes \Lambda \right] \otimes \mathbb { C } \left[ q , q ^ { - 1 } \right]  \\
& \mathcal { K } _ { - } = \{ f \in \mathcal { K } \ \ |\ \ f ( 0 ) \neq \infty , f ( \infty ) = 0 \}
\end{align*}

By Theorem 3.2, the $ \nu $-rellative obstruction theory is perfect, then from \cite{lee2004quantum} we have a virtual structure sheaf, denoted by $ \mathcal { O }^ { \mathrm { vir } } _ { \mathcal{Q} _ { g , n } ^ { \epsilon } ( W / / \mathbf{G} , \beta )} $. Consider a natural $ S_n $ action on $ \mathcal { Q }^{ \epsilon } _ { g , n } ( V / / \mathbf { G } , \beta ) $ by permuting the n marked points, the virtual structure sheaf and the determinant line bundle are invariant under this action. Then we have a $ S_n $-module
\begin{align*}
[ \mathbf { t } ( L ) , \ldots , \mathbf { t } ( L ) ] _ { g , k , \beta } : = \sum _ { m } ( - 1 ) ^ { m } {H} ^ { m } \left( \mathcal { O }^ { \mathrm { vir } } _ { \mathcal { Q } _ { g , n } ^ { \epsilon }   ( V / / \mathbf{G} , \beta )} \otimes \mathcal { D } ^ { R , l } \otimes _ { i = 1 } ^ { k } \mathbf { t } \left( L _ { i } \right) \right)
\end{align*}
where $ \mathbf { t } ( q ) \in \mathcal { K } _ { + } $. 

The correlators of the permutation-equivariant quasimap K-theory of level
$l$ are defined as
\begin{align*}
\langle \mathbf { t } ( L ) , \ldots , \mathbf { t } ( L ) \rangle _ { g , n , \beta } ^ { R , l , \epsilon , S _ { n } } : = pt _ { * } \left( \mathcal { O } _ { \mathcal{Q} _ { g , n } ^ { \epsilon } ( V // \mathbf{G}, \beta ) } ^ { \mathrm { vir } } \otimes \mathcal { D } ^ { R , l } \otimes _ { i = 1 } ^ { k } \mathbf { t } \left( L _ { i } \right) \right)
\end{align*}
where $ pt_* $ is the K-theoretic pushforward along the projection
\begin{align*}
pt : \left[ \mathcal{ Q } _ { g , n } ^ { \epsilon } ( V / / \mathbf{G} , \beta ) / S _ { n } \right] \rightarrow [ \mathrm { pt } ]
\end{align*}
by definition, extracting $S_n$-invariants from the $S_n$-module defined above.

\begin{definition}
The permutation-equivariant K-theoretic $\mathcal{J}^{R,l,\epsilon}$-function of $V // \mathbf{G}$ of level $l$ is defined as
\begin{align*}
\mathcal { J } _ { S _ { \infty } } ^ { R , l , \epsilon } ( \mathbf { t } ( q ) , Q ) 
 &:= \sum_{k \geq 0, \beta \in {\operatorname { Eff } ( V , \mathbf { G } , \theta )} } Q^{\beta} (ev_{\bullet})_{*} [\operatorname{Res}_{\operatorname{F}_{0,\beta}}( \mathcal{QG} _ { 0 , n } ^ { \epsilon } ( V / / \mathbf { G } , \beta )_{0})^{\mathrm{vir}} \otimes \mathcal { D } ^ { R , l } \otimes _ { i = 1 } ^ { n } \mathbf { t } ( L _ { i } ) ]^{S_n} \\
 &:= 1 + \frac { \mathbf { t } ( q ) } { 1 - q } +\sum _ { a } \sum _ { \beta \neq 0 } Q ^ { \beta } \chi \left( \operatorname{F} _ { 0 , \beta } , \mathcal { O } _ { \operatorname{F} _ { 0 , \beta } } ^ { \mathrm { vir } } \otimes { ev }_{\bullet} ^ { * } ( \phi _ { a } ) \otimes \left( \frac { \operatorname { t r } _ { \mathbb { C } ^ { * } } \mathcal { D } ^ { R , l } } { \lambda_{-1}^{\mathbb{C}^*}  N _ { \operatorname{F} _ { 0 , \beta } } ^ { \vee } } \right) \right) \phi ^ { a } \\
 &+ \sum_a \sum _ { n \geq 1 or \beta(L_\theta) \geq \frac{1}{\epsilon} \atop ( n , \beta ) \neq ( 1,0 ) } Q ^ { \beta } \left\langle \frac { \phi _ { a } } { ( 1 - q ) ( 1 - q L ) } , \mathbf { t } ( L ) , \ldots , \mathbf { t } ( L ) \right\rangle _ { 0 , n + 1 , \beta } ^ { R , l , \epsilon , S _ { n } } \phi ^ { a }
\end{align*}
where $\{ \phi_\alpha \}$ is a basis of $K^0(V//\mathbf{G})$ and $\{ \phi^\alpha \}$ is the dual basis with respect to twisted pairing  $( \ \ , \ \ )^{R,l}$ .
\end{definition}

\begin{definition}
The $\mathcal{J}^{R,l,\infty}$-function is a function 
\begin{align*}
(1-q)\mathcal{J}^{R,l,\infty} : \mathcal{K}_{+} \rightarrow \mathcal{K}
\end{align*}
we call $\mathcal { L } _ { S _ { \infty } }$ the range of $\mathcal{J}^{R,l,\infty}$-function, defined by
\begin{align*}
\mathcal { L } _ { S _ { \infty } } : = \bigcup _ { \mathfrak { t } ( q ) \in \mathcal { K } _ { + } } ( 1 - q ) \mathcal { J } _ { S _ { \infty } } ^ { R , l , \infty } ( \mathbf { t } ( q ) , Q ) \subset \mathcal { K }
\end{align*} 
\end{definition}

\begin{definition}
When taking $\epsilon$ small enough, denoted by $\epsilon=0^{+}$, we call $\mathcal{J}^{R,l,0^{+}}(0)$ the small $I$-function of level $l$, i.e,
\begin{align*}
{I}^{R,l}(q,Q):= \mathcal { J } _ { S _ { \infty } } ^ { R , l , 0^{+} } ( 0 , Q ) =  1 + \sum _ { \beta \geq 0 } Q ^ { \beta } (ev_{\bullet})_{*} \left(  \mathcal { O } _ { \operatorname{F} _ { 0 , \beta } } ^ { \mathrm { vir } } \otimes  \left( \frac { \operatorname { t r } _ { \mathbb { C } ^ { * } } \mathcal { D } ^ { R , l } } { \lambda_{-1}^{\mathbb{C}^*}  N _ { \operatorname{F} _ { 0 , \beta } } ^ { \vee } } \right) \right)  
\end{align*}
\end{definition}

\subsection{A Mirror theorem for partial flag variety}

Let $X$ be $F l _ { r _ { 1 } , \ldots , r _ { I } } \left( \mathbb { C } ^ { n } \right)= \{0 \subset \mathbb{C}^{r_1} \subset \cdots \subset \mathbb{C}^{r_I} \subset \mathbb{C}^n \}$ the manifold of partial flags in $\mathbb{C}^n$. It admits the an embedding into the product of Grassmannian 
\begin{align*}
\psi: X \hookrightarrow Y :=\operatorname { Gr } \left( r _ { 1 } , \mathbb{C}^n \right) \times \cdots \times G r \left( r _ { I } , \mathbb{C}^n \right) 
\end{align*}
together with Pl\"ucker embedding, we have 
\begin{align*}
j: Y \hookrightarrow \Pi := \prod_{i=1}^{I} \mathbb{P}^{n_i-1}, n_{i} :=\left( \begin{array}{c}{n} \\ {r_i}\end{array}\right)
\end{align*}
Let $(x : y)$ be homogeneous coordinates on $\mathbb{P}^1$. A degree $d$ holomorphic map $\mathbb{P}^1 \rightarrow \mathbb{P}^N$ is uniquely determined, up to a constant scalar factor, by $N + 1$ relatively prime degree d binary forms $ (f_0(x : y) : \ldots : f_N(x : y))$. Omitting the condition that the forms are relatively prime we compactify the space of degree
$d$ holomorphic maps  $\mathbb{P}^1 \rightarrow \mathbb{P}^N$ to a complex projective space of dimension $(N + 1)(d + 1) - 1$, denoted by $\mathbb{P}^N_d$. Similarly, we have $\Pi_d$. 

As we see, $\mathcal{QG}_{0,0}(F l _ { r _ { 1 } , \ldots , r _ { I } } \left( \mathbb { C } ^ { n } \right),d)$ is the hyper-quot scheme, parametrize sequence of bundles $E^{r_1} \rightarrow \ldots \rightarrow E^{r_I} \rightarrow E^{r_{I+1}} = \mathbb{C}^n$ on $\mathbb{P}^1$, the map $\nu$ is given by top exterior powers $\wedge^{i} E^{r_i} \rightarrow  \wedge^{i} \mathbb{C}^n$. A bi-degree $(1, d)$ rational curve in $\mathbb{P}^1 \times X$ projected to $\mathbb{P}^1 \times \mathbb{P}^{n_i-1}$ by the Pl\"ucker map consists of the graph $\Sigma_0$ of a degree $m_0 \leq d_i$ map $\mathbb{P}^1 \rightarrow \mathbb{P}^{n_i-1}$ and a few vertical curves $\Sigma_j$ of bi-degrees $(0,m_j)$ with $\sum m_j = d_i - m_0$, attached to the graph, see \cite{givental2003quantum} for details.  $\widebar{GM}_{0,0}(X,d)$ and $\mathcal{QG}_{0,0}(X,d)$ are rational desingularization of Drinfiled's compactification of the space of degree d maps from $\mathbb{P}^1$ to $X$, i.e. the closure of $\operatorname{Map}_d(\mathbb{P}^1,X)$ in $\Pi_d$\cite{givental2003quantum}. So we have following commutative diagram:
\begin{align}
\xymatrix{ \widebar{GM}_{0,0}(X,d) \ar[rr]^{\mu} & & \Pi _d  & & \ar[ll]^{\nu} \mathcal{QG}_{0,0}(X,d)  \\  & & \Pi\ar[u]^{\alpha_d}  &  & \\ \widebar{M}_{0,1}(X,d) \ar[uu]^{\alpha _d^X}  \ar[rr] \ar[rrd]^{ev}  &  & Y \ar[u]^{j}  & & F_0 \ar[ll]^{} \ar[uu]^{\alpha _F} \ar[llu]^{f} \ar[lld]^{\tilde{ev}  } \\ &  &  X \ar[u]^{\psi} &  &  }
\end{align}

By definition, the small $J$-function is defined by 
\begin{align*}
J_d := ev_* \left(  \frac{\mathcal{O}_{\widebar{M}_{0,1}(X,d)}}{\lambda_{-1}(N^{\vee}_{\widebar{M}_{0,1}(X,d)/\widebar{GM}_{0,0}(X,d)})} \right) = ev_* \left(  \frac{\mathcal{O}_{\widebar{M}_{0,1}(X,d)}}{(1-q)(1-qL)} \right)
\end{align*}
and small $I$-function is defined by
\begin{align*}
I_d := \tilde{ev}_* \left(  \frac{\mathcal{O}_{F_0}}{\lambda_{-1}(N^{\vee}_{F_0/\mathcal{QG}_{0,0}(X,d)})}  \right)
\end{align*}

\begin{lemma}{[Correspondence of residues \cite{taipale2013k}]} Let $X$, $Y$ be nonsingular schemes or smooth Deligne-Mumford stacks with a $T=(\mathbb{C}^*)^n$-action and let $g : X \rightarrow Y$ be a proper equivariant mrophism and $W= \{W_k \}$, $V$ be components in the torus-fixed loci of $X$ and $Y$ respectively, such that $g(W) \subset V $. We have
\begin{align*}
\xymatrix{ W \ar[rr]^{i} \ar[d]_{f}  & & X \ar[d]_{g} \\ 
V \ar[rr]^{j} & & Y }
\end{align*}
Then for a coherent sheaf $\mathcal{H}$ on $W$, let $\mathcal{F} = i_*(\mathcal{H})$, then 
\begin{align*}
\frac{j^* g_* (\mathcal{F})}{\lambda_{-1}(N^{\vee}_{V/Y})} = \sum _i  f_* \frac{ i^*(\mathcal{F})}{\lambda_{-1}(N^{\vee}_{W_i/X})}
\end{align*}
\end{lemma}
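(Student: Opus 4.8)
The plan is to reduce the statement to the classical localization/base-change identity for virtual structure sheaves in equivariant K-theory, applied to the commutative square of torus-fixed inclusions. First I would recall the virtual (Atiyah--Bott-type) localization formula in K-theory: for the proper equivariant morphism $g: X \to Y$ and the fixed component $V \subset Y$ with normal bundle $N_{V/Y}$, one has $j^* g_* (\mathcal{F}) = \sum_i f_*\bigl( i^*(\mathcal{F}) \otimes \lambda_{-1}(N^\vee_{W_i/X})^{-1} \otimes (\text{a correction})\bigr)$ only after dividing by $\lambda_{-1}(N^\vee_{V/Y})$, so that the class $\tfrac{1}{\lambda_{-1}(N^\vee_{V/Y})}$ that appears on the left-hand side is exactly what makes the two sides agree. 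Concretely, the argument is: localize on $Y$ to write $g_*(\mathcal{F}) = j_* \bigl( \tfrac{j^* g_*(\mathcal{F})}{\lambda_{-1}(N^\vee_{V/Y})} \bigr)$ plus contributions from other fixed components (which do not meet $V$), and localize on $X$ to write $\mathcal{F} = i_*(\mathcal{H})$ with $\mathcal{H}$ supported on $W = \bigsqcup_i W_i$, so $\mathcal{F} = \sum_i (i_i)_* \bigl( \tfrac{\mathcal{H}_i}{\lambda_{-1}(N^\vee_{W_i/X})} \cdot (\dots) \bigr)$; then push forward by $g$ and compare.

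The cleanest route is probably to avoid re-deriving localization from scratch and instead cite the K-theoretic virtual localization theorem (in the form used by Tseng--You, or Qu/Thomason), then argue purely diagrammatically. The key steps, in order, would be: (1) restrict attention to a neighborhood of $V$ in $Y$ and the corresponding neighborhood of $W$ in $X$, so that all pushforwards are honest and the only fixed locus in play maps into $V$; (2) apply functoriality $g_* i_* = j_* f_*$ for the commutative square, giving $g_* \mathcal{F} = g_* i_* \mathcal{H} = j_* f_* \mathcal{H}$; (3) on the target side, apply K-theoretic localization for the embedding $j: V \hookrightarrow Y$ to the class $g_*\mathcal{F}$, which is supported (after passing to the fixed locus) on $V$, obtaining $j^*(g_*\mathcal{F}) = \lambda_{-1}(N^\vee_{V/Y}) \cdot (\text{restriction of } f_*\mathcal{H})$ up to identifying the localized classes; (4) on the source side, decompose $\mathcal{H} = \sum_i (\iota_i)_* \mathcal{H}_i$ over the connected components $W_i$ and apply localization for each $\iota_i: W_i \hookrightarrow X$ to rewrite $f_*\mathcal{H} = \sum_i f_*\bigl( i^*\mathcal{F} / \lambda_{-1}(N^\vee_{W_i/X})\bigr)$; (5) combine (3) and (4) and divide by $\lambda_{-1}(N^\vee_{V/Y})$ to obtain the stated formula.

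The one genuine subtlety, and the step I would treat most carefully, is the compatibility of the normal bundle data: one needs that the excess/normal bundle of $W_i$ inside $X$ relative to the normal bundle of $V$ inside $Y$ is accounted for correctly, i.e. that $N_{W_i/X}$ fits into the short exact sequence $0 \to N_{W_i/V'} \to N_{W_i/X} \to f^* N_{V/Y}|_{W_i} \to 0$ (where $V'$ is the appropriate intermediate), so that the two appearances of $\lambda_{-1}$ multiply correctly under the exact-sequence multiplicativity of $\lambda_{-1}$. This is where the hypothesis that $X,Y$ are smooth (so the fixed loci are smooth and the normal bundles are honest vector bundles, split into moving parts) is used. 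In our later application $g$ will be a composition such as $\nu$ or $\alpha$ in the big diagram (1), and $\mathcal{F}$ will be a virtual structure sheaf twisted by the determinant line bundle $\mathcal{D}^{R,l}$; the lemma is exactly the bookkeeping that lets us transport the residue definition of the $I$-function across these maps. I expect the main obstacle to be purely notational — keeping track of which fixed component contributes and verifying the normal-bundle exact sequence — rather than conceptual, since the statement is essentially self-improving once K-theoretic virtual localization is granted.
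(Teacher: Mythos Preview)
Your proposal is correct and contains the essential ingredients, but it is packaged more heavily than necessary and misidentifies where the difficulty lies. The paper's proof is a two-line computation using only the self-intersection formula (no virtual localization, no excess-bundle comparison). From the commutativity $g_* i_* = j_* f_*$ one has
\[
j^* g_* \mathcal{F} \;=\; j^* g_* i_* \mathcal{H} \;=\; j^* j_* f_* \mathcal{H} \;=\; \lambda_{-1}(N^\vee_{V/Y})\cdot f_* \mathcal{H},
\]
so the left-hand side of the lemma is simply $f_*\mathcal{H}$. On the other side, $i^*\mathcal{F} = i^* i_* \mathcal{H} = \lambda_{-1}(N^\vee_{W/X})\cdot \mathcal{H}$ (componentwise on each $W_i$), so the right-hand side also collapses to $\sum_i f_* \mathcal{H}_i = f_*\mathcal{H}$. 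That is the entire argument.

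In particular, the normal-bundle compatibility you flag as the ``one genuine subtlety'' --- the exact sequence relating $N_{W_i/X}$ to $f^* N_{V/Y}$ --- is not used at all: the two $\lambda_{-1}$ factors never meet, since each one cancels independently on its own side of the equation. Your route would eventually arrive at the same place, but by invoking the full localization theorem you are reproving the self-intersection formula as a special case rather than just quoting it. The advantage of your framing is that it would generalize more readily to the virtual setting (where $X$ and $Y$ carry perfect obstruction theories rather than being smooth); the advantage of the paper's approach is that, under the smoothness hypotheses actually in force, nothing beyond $j^*j_* = \lambda_{-1}(N^\vee)\cdot(-)$ is required.
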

\begin{proof}
\begin{align*}
j^*g_*(i_* \mathcal{H}) = j^*j_*f_* \mathcal{H} = f_* \mathcal{H} \cdot \lambda_{-1}(N^{\vee}_{V/Y})
\end{align*}
and 
\begin{align*}
i^*i_* \mathcal{H} = \mathcal{H} \cdot \lambda_{-1}(N^{\vee}_{W/X})
\end{align*}
so we have
\begin{align*}
\frac{j^* g_* (\mathcal{F})}{\lambda_{-1}(N^{\vee}_{V/Y})} = \sum _i  f_* \frac{ i^*(\mathcal{F})}{\lambda_{-1}(N^{\vee}_{W_i/X})}
\end{align*}
\end{proof}

Applying $i := j \circ \psi $ to $J$-function, we get
\begin{align*}
i_* J &= i_* ev_* \left(  \frac{\mathcal{O}_{\bar{M}_{0,1}(X,d)}}{(1-q)(1-qL)} \right) = i_* ev_* \left(  \frac{(\alpha^X_d)^*\mathcal{O}_{\bar{GM}_{0,0}(X,d)}}{1-qL} \right) \\
      &= \frac{\alpha^*_d \mu_* \mathcal{O}_{\bar{GM}_{0,0}(X,d)}}{\lambda_{-1}(N^{\vee}_{\mathcal{P}^N/\mathcal{P}^N_d})} = \frac{\alpha^*_d \mathcal{O}_{\mathcal{P}^N_d}}{\lambda_{-1} (N^{\vee}_{\mathcal{P}^N/\mathcal{P}^N_d})}
\end{align*}
where the third equation comes from correspondence of residue and the fourth equation comes from the rational desingulariztion.
 
On the other hand, similarly,
\begin{align*}
i_* I : & = i_* \tilde{ev} _* \left( \frac{\alpha ^*_F \mathcal{O}_{\mathcal{QG}_{0,0}(X,d) }}{\lambda_{-1}(N^{\vee}_{F_0/\mathcal{QG}_{0,0}(X,d)})}  \right) = \frac{\alpha^*_d \nu^* \mathcal{O}_{\mathcal{QG}_{0,0}(X,d)}}{\lambda_{-1}(N^{\vee}_{\mathbb{P}^N/\mathbb{P}^N_d})} \\
        & = \frac{\alpha^*_d \mathcal{O}_{\mathbb{P}^N_d}}{\lambda_{-1}(N^{\vee}_{\mathbb{P}^N/\mathbb{P}^N_d})}
\end{align*}

\begin{lemma}
If we consider a big torus $\mathbb{T}=(\mathbb{C}^*)^n$-action on $\mathbb{C}^n$, then the push-forward of equivariant K groups
\begin{align*}
i_* : K_{\mathbb{T}}^{0}(F l _ { r _ { 1 } , \ldots , r _ { I } } \left( \mathbb { C } ^ { n } \right)) \rightarrow K_{\mathbb{T}}^0(\Pi)
\end{align*}
is injective.
\end{lemma}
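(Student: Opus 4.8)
The plan is to use the self-intersection formula together with fixed-point localization for the big torus $\mathbb{T}=(\mathbb{C}^*)^n$. Since $\psi$ and $j$ are closed immersions, $i=j\circ\psi$ is a regular closed embedding of smooth projective $\mathbb{T}$-varieties with normal bundle $N_{X/\Pi}$. For any $\mathcal{F}\in K^0_{\mathbb{T}}(X)$ the self-intersection formula gives $i^*i_*\mathcal{F}=\mathcal{F}\otimes\lambda_{-1}(N^\vee_{X/\Pi})$, so $\ker i_*$ is contained in the annihilator of the K-theoretic Euler class $e:=\lambda_{-1}(N^\vee_{X/\Pi})\in K^0_{\mathbb{T}}(X)$. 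Hence it suffices to prove that $e$ is a non-zero-divisor in $K^0_{\mathbb{T}}(X)$.

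To see this, I would recall that $X=Fl_{r_1,\dots,r_I}(\mathbb{C}^n)$ has finitely many $\mathbb{T}$-fixed points, namely the coordinate flags $\mathbb{C}^{S_1}\subset\dots\subset\mathbb{C}^{S_I}$ with $S_1\subseteq\dots\subseteq S_I$, and a $\mathbb{T}$-stable affine paving by Bruhat cells; consequently $K^0_{\mathbb{T}}(X)$ is a free $R(\mathbb{T})$-module and the restriction-to-fixed-points ring homomorphism $r:K^0_{\mathbb{T}}(X)\to\bigoplus_{p\in X^{\mathbb{T}}}K^0_{\mathbb{T}}(p)=\bigoplus_p R(\mathbb{T})$ is injective (equivariant formality / the GKM property for flag varieties). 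For a fixed point $p$ with image $q=i(p)$, the fibre $N_{X/\Pi}|_p$ is a $\mathbb{T}$-subquotient of the tangent representation $T_q\Pi$; because $q$ is an isolated fixed point of $\Pi=\prod_i\mathbb{P}^{n_i-1}$, every weight occurring in $T_q\Pi$, hence every weight $w$ occurring in $N_{X/\Pi}|_p$, is a nontrivial character of $\mathbb{T}$. Therefore $r_p(e)=\prod_w(1-w^{-1})$ is a nonzero element of the integral domain $R(\mathbb{T})\cong\mathbb{Z}[t_1^{\pm1},\dots,t_n^{\pm1}]$. Since in a finite product of nonzero integral domains an element is a non-zero-divisor precisely when all of its components are nonzero, $r(e)$ is a non-zero-divisor in $\bigoplus_p R(\mathbb{T})$; combined with the injectivity of $r$ this shows $e$ is a non-zero-divisor, and so if $\mathcal{F}\cdot e=0$ then $r(\mathcal{F})\,r(e)=0$, forcing $r(\mathcal{F})=0$ and hence $\mathcal{F}=0$. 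Thus $i_*$ is injective.

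The only non-formal input is the injectivity of $r$ in the second step — equivalently, torsion-freeness of $K^0_{\mathbb{T}}(X)$ over $R(\mathbb{T})$, which is what promotes the (automatic) injectivity of $i_*$ after inverting the torus characters, via Thomason's localization theorem, to honest integral injectivity. For partial flag varieties this is classical (Bruhat paving; Kostant--Kumar), so I expect it to be the only point requiring care. The essential role of the big torus is precisely to make the Euler classes $r_p(e)$ nonzero; this fails for the trivial group, which is why the equivariant enhancement is necessary.
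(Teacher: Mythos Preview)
Your argument is correct and rests on the same essential input as the paper's: the big torus $\mathbb{T}$ has only isolated fixed points on $X=Fl_{r_1,\dots,r_I}(\mathbb{C}^n)$, the restriction map to fixed points is injective (equivariant formality via the Bruhat paving), and the relevant K-theoretic Euler classes at fixed points are products of $(1-w^{-1})$ with $w$ a nontrivial character, hence nonzero in $R(\mathbb{T})$.

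The packaging, however, differs. The paper localizes on \emph{both} sides: it invokes the localization isomorphism $K^0_{\mathbb{T}}(X)\hookrightarrow K^0_{\mathbb{T}}(X^{\mathbb{T}})$ for $X=Fl$ and for $X=\Pi$, observes that $i$ restricts to an injection of finite fixed-point sets, and then uses the correspondence of residues (Lemma~2.2 in the paper) to see that $i_*$, read through these two localization maps, is injective. You instead stay on the source: you apply the self-intersection formula $i^*i_*\mathcal{F}=\mathcal{F}\cdot\lambda_{-1}(N^\vee_{X/\Pi})$ and reduce to showing the Euler class is a non-zero-divisor in $K^0_{\mathbb{T}}(Fl)$, which you verify fibrewise at fixed points. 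Your route avoids invoking localization on $\Pi$ and makes the integrality issue (torsion-freeness over $R(\mathbb{T})$) explicit, which is a cleaner bookkeeping of exactly where the ``big torus'' hypothesis enters; the paper's route is shorter once one accepts the localization isomorphism and is more in the spirit of how the lemma is actually used (comparing residues on $Fl$ and $\Pi$). Both are standard and equivalent in strength.
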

\begin{proof}
This comes from the localization formula and Lemma 4.1, indeed, 
\begin{align*}
K_{\mathbb{T}}^0(X) \cong K_{\mathbb{T}}^0(X^{\mathbb{T}})
\end{align*}
where $X$ stands for $F l _ { r _ { 1 } , \ldots , r _ { I } } \left( \mathbb { C } ^ { n } \right)$ or $\Pi$, and in both cases, the $\mathbb{T}$-fixed loci $X^{\mathbb{T}}$ are fixed points, then by injective map from fixed points to fixed points and correspondence of residue, we get the conclusion. 
\end{proof}

From the above discussion and Lemma 2.3, we arrive at the following theorem by taking the non-equivariant limit,

\begin{theorem}{\cite{taipale2013k}}
The small $I$-function equals to the small $J$-function.
\end{theorem}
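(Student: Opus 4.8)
The plan is to push both the $I$-function and the $J$-function forward along the composite embedding $i = j\circ\psi$ into the equivariant K-theory of the ambient product of projective spaces $\Pi = \prod_{i=1}^{I}\mathbb{P}^{n_i-1}$, to check that the two images agree, and then to descend by injectivity of $i_*$. I would work $\mathbb{T}=(\mathbb{C}^*)^n$-equivariantly for the standard torus acting on $\mathbb{C}^n$, so that $X$ and $\Pi$ both carry only isolated fixed points; this equivariance is needed both to make the residue classes $\lambda_{-1}(N^\vee_{\Pi/\Pi_d})^{-1}$ meaningful and to get injectivity of $i_*$.

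First I would compute $i_*J_d$. Starting from $J_d = ev_*\bigl(\mathcal{O}_{\overline{M}_{0,1}(X,d)}/((1-q)(1-qL))\bigr)$ and the commutative diagram above, I would absorb the $\alpha_d^X$-contribution into the residue and apply the correspondence-of-residues lemma along $\mu$ and $\alpha_d$, obtaining
\[
i_* J_d \;=\; \frac{\alpha_d^*\,\mu_*\,\mathcal{O}_{\overline{GM}_{0,0}(X,d)}}{\lambda_{-1}\bigl(N^\vee_{\Pi/\Pi_d}\bigr)}.
\]
The essential geometric point is that $\mu$ realizes $\overline{GM}_{0,0}(X,d)$ as a rational desingularization of the Drinfeld compactification of $\operatorname{Map}_d(\mathbb{P}^1,X)$ inside $\Pi_d$, so $\mu_*\mathcal{O}_{\overline{GM}_{0,0}(X,d)} = \mathcal{O}_{\Pi_d}$, whence $i_*J_d = \alpha_d^*\mathcal{O}_{\Pi_d}/\lambda_{-1}(N^\vee_{\Pi/\Pi_d})$.

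Next I would run the identical argument on the quasimap side, applying correspondence of residues along $\nu$ and $\alpha_d$ and using that the hyperquot scheme $\mathcal{QG}_{0,0}(X,d)$ is also a rational desingularization of the same Drinfeld compactification, so that $\nu_*\mathcal{O}_{\mathcal{QG}_{0,0}(X,d)} = \mathcal{O}_{\Pi_d}$. This yields $i_*I_d = \alpha_d^*\mathcal{O}_{\Pi_d}/\lambda_{-1}(N^\vee_{\Pi/\Pi_d})$, the very same class, so $i_*I_d = i_*J_d$ in $K^0_{\mathbb{T}}(\Pi)$ for every $d$. Since $i_*\colon K^0_{\mathbb{T}}(X)\to K^0_{\mathbb{T}}(\Pi)$ is injective — this follows from $\mathbb{T}$-localization and correspondence of residues, the fixed loci on both sides being finite sets of points — I conclude $I_d = J_d$ in $K^0_{\mathbb{T}}(X)$ for all $d$; summing over $d$ and passing to the non-equivariant limit (legitimate, since $I_d$ and $J_d$ are defined without reference to $\mathbb{T}$) gives $I = J$.

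The hard part will be the geometric claim, invoked twice above, that $\overline{GM}_{0,0}(X,d)$ via $\mu$ and $\mathcal{QG}_{0,0}(X,d)$ via $\nu$ are both rational desingularizations of the same Drinfeld space inside $\Pi_d$, so that in each case the pushforward of the structure sheaf is again the structure sheaf. Establishing this requires the explicit description from Section 2.6: a bidegree $(1,d)$ stable map to $\mathbb{P}^1\times X$ has Plücker image in each factor $\mathbb{P}^1\times\mathbb{P}^{n_i-1}$ equal to the graph of a degree-$m_0$ map with $m_0\le d_i$ together with vertical bubbles of total degree $d_i-m_0$, and one must match this stratification with the one on the hyperquot side and with Drinfeld's compactification. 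Once both compactifications are identified birationally with the Drinfeld space, everything else is bookkeeping with the projection formula and $\lambda_{-1}$-classes, exactly as in the computations above.
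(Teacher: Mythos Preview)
Your proposal is correct and follows essentially the same route as the paper: push both $I_d$ and $J_d$ forward along $i=j\circ\psi$ to $K^0_{\mathbb{T}}(\Pi)$, apply the correspondence-of-residues lemma together with the fact that $\overline{GM}_{0,0}(X,d)$ and $\mathcal{QG}_{0,0}(X,d)$ are both rational desingularizations of the same Drinfeld compactification in $\Pi_d$, conclude $i_*I_d=i_*J_d$, and descend via injectivity of $i_*$ (proved by $\mathbb{T}$-localization to isolated fixed points) before taking the non-equivariant limit. One small slip: the pushforwards $\mu_*\mathcal{O}$ and $\nu_*\mathcal{O}$ equal the structure sheaf of the Drinfeld image, not $\mathcal{O}_{\Pi_d}$; the paper writes it the same loose way, and since the point is only that the two pushforwards agree, the argument is unaffected.
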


\section{K theoretic abelian and non-abelian correspondence for I-function with level structures}
\subsection{Identify the fixed locus and evaluation map}
From Definition 2.11, we know it's important to figure out evaluation map $ev_{\bullet}$ and fixed loci $F_{0,\beta}$. Here we follow \cite{webb2018abelian}, let $ V $ is a vector space, $ \mathbf{G} $ is a connected reductive group, and $ \mathbf{T} \subset \mathbf{G} $ is the maximal torus. Let $\rho : \mathbf{G} \rightarrow GL(V) $ be the representation. Let $\mathbf{S}$ be a torus in $GL(V)$ that commutes with $\rho(\mathbf{G})$. In the following, we consider GIT quotient $ V//_{\theta} \mathbf{G} $.
\begin{definition}
The $\mathbf{T}$-degree $W \tilde{\beta}$ of a quasimap $(\mathcal{P},u)$ is the $W$-orbit of the homomorphism $\tilde{\beta} \in \mathrm{Hom}(\chi(\mathbf{T}),\mathbb{Z})$ given by $\tilde{\beta}(\xi)=\rm{deg}_{\mathbb{P}^1}(\mathcal{J}\times_{\mathbf{T}}\mathbb{C}_{\xi})$, where $\mathcal{J}$ is a principal $\mathbf{T}$-bundle associated to $\mathcal{P}$.
\end{definition}
The $\mathbf{T}$-degree of a principal $\mathbf{G}$-bundle $\mathcal{P}$ has the following properties:

$\bullet$ The $\mathbf{T}$-degree of $\mathcal{P}$  determines $\mathcal{P}$  up to isomorphism.

$\bullet$ The natural map
$\tau : \operatorname { Hom } ( \chi ( \mathbf{T} ) , \mathbb { Z } ) \rightarrow \operatorname { Hom } ( \chi ( \mathbf{G} ) , \mathbb { Z } )$
sends a representative of the $\mathbf{T}$-degree of $\mathcal{P}$ to the degree of $\mathcal{P}$.

Since {$\mathbf{S}$ commutes with $\mathbf{G}$, then the group generated by $\mathbf{S}$ and $\rho(\mathbf{T})$ is a torus. Choose a basis of V that diagonalizes this torus, and denote the associated weights of the $\mathbf{T}$-action by $\xi_1,\cdots,\xi_n \in \chi(\mathbf{T})$. Let $\mathcal{P} \rightarrow \mathbb{P}^1$ be the principal $\mathbf{G}$-bundle of $\mathbf{T}$-degree $W \tilde{\beta}$ such that $\mathcal{P} \times_{\mathbf{G}} V=\oplus_{j=1}^{n} \mathcal{O}(\tilde{\beta}(\xi_j))$.

From \cite{webb2018abelian}, we can get the following description of fixed loci.

\begin{proposition}\cite{webb2018abelian}
For any $\tilde{\beta} \in \mathrm{Hom}(\chi(\mathbf{T} ), \mathbb{Z})$, there is \\
$\bullet$ a parabolic subgroup $P_{\tilde{\beta}} \subset \mathbf{G}$, \\
$\bullet$ a subspace $V_{\tilde{\beta}} \subset  V$ stable under $P_{\tilde{\beta}}$, and \\
$\bullet$ a morphism $\Xi_{\tilde{\beta}} : \left( V _ { \tilde { \beta } } \cap V ^ { s } ( \mathbf{G} ) \right) / P _ { \tilde { \beta } } \rightarrow F _ { \beta }$ that is an isomorphism onto a connected component of $F _ { \beta }$\\
The image of $\Xi_{\tilde{\beta}}$ depends only on the $W$-class of $ \tilde{\beta} $, and if we denote the image $ F _ { W \tilde { \beta } } $, then we have an $S$-equivariant commuting diagram
\begin{align*}
\xymatrix{ \left( V _ { \tilde { \beta } } \bigcap V ^ { s } ( \mathbf{G} ) \right) / \mathbf{P} _ { \tilde { \beta } } \ar[r]^-{\Xi_{\tilde{\beta}}}\ar[d]_{i} & F _ { W \tilde { \beta } } \ar[d]_{ev_\bullet} \\  V ^ { s } ( \mathbf{G} ) / \mathbf{P} _ { \tilde { \beta } } \ar[r]^{h} &  V ^ { s } ( \mathbf{G} ) / \mathbf{G} }
\end{align*}
\end{proposition}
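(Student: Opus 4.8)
The plan is to identify the fixed locus $F_\beta$ (the locus $F_{0,\beta}$ of Section 2.4) moduli-theoretically with an explicit quotient attached to $\tilde\beta$, and then read off all the stated structure from that identification. The first step is to break $F_\beta$ up according to the isomorphism type of the underlying principal bundle. Over $\mathbb{P}^1$ every principal $\mathbf{G}$-bundle reduces to the maximal torus $\mathbf{T}$ and is classified up to isomorphism by a cocharacter of $\mathbf{T}$ modulo the Weyl group, i.e. by the $\mathbf{T}$-degree $W\tilde\beta \in \mathrm{Hom}(\chi(\mathbf{T}),\mathbb{Z})/W$ of Definition 3.1, with $\tau$ recovering the class $\beta$. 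Since the $\mathbf{T}$-degree is locally constant in families, $F_\beta = \bigsqcup_{\tau(\tilde\beta)=\beta} F_{W\tilde\beta}$, and it suffices to treat one class $W\tilde\beta$ at a time. Fix a representative $\tilde\beta$ together with the bundle $\mathcal{P}\to\mathbb{P}^1$ satisfying $\mathcal{P}\times_{\mathbf{G}}V = \bigoplus_{j=1}^{n}\mathcal{O}(\tilde\beta(\xi_j))$ coming from the basis $\xi_1,\dots,\xi_n$ that simultaneously diagonalizes $\mathbf{S}$ and $\rho(\mathbf{T})$.

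Next I would define $P_{\tilde\beta}$ and $V_{\tilde\beta}$ by the standard parabolic-from-a-cocharacter recipe: $P_{\tilde\beta} = \{g\in\mathbf{G} : \lim_{t\to 0}\tilde\beta(t)\,g\,\tilde\beta(t)^{-1}\ \text{exists in}\ \mathbf{G}\}$, which is a parabolic, and $V_{\tilde\beta}$ the sum of those weight spaces $V_{\xi_j}$ on which a suitable normalization of $\tilde\beta$ acts with non-negative weights, which is $P_{\tilde\beta}$-stable. The geometric input is the local structure of a $\mathbb{C}^*$-fixed quasimap with base point concentrated at $0$: on $\mathbb{P}^1\setminus\{0\}$ the induced map to $V//\mathbf{G}$ is the constant $p = ev_\bullet(\bullet)$, so after trivializing $\mathcal{P}$ there the section has the form $s\mapsto\tilde\beta(s)\cdot v_0$ for a representative $v_0\in V^s(\mathbf{G})$ of $p$; requiring that the section extend regularly over $\infty$ with no base point there forces $\lim_{s\to 0}\tilde\beta(s)\cdot v_0$ to exist, i.e. $v_0\in V_{\tilde\beta}$; and the remaining ambiguity in the reduction is exactly left multiplication by $P_{\tilde\beta}$. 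This produces the morphism $\Xi_{\tilde\beta}\colon (V_{\tilde\beta}\cap V^s(\mathbf{G}))/P_{\tilde\beta}\to F_{W\tilde\beta}$, which I would promote to a morphism of moduli functors by running the construction over an arbitrary base $S$, using local constancy of the $\mathbf{T}$-degree and the fact that in this situation the relevant $R^\bullet\pi_*$ are locally free.

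I would then build the inverse: from a $\mathbb{C}^*$-fixed quasimap of $\mathbf{T}$-degree $W\tilde\beta$, extract the reduction to $\mathbf{T}$, its value away from $0$, and reduce modulo the residual gauge group, checking this is inverse to $\Xi_{\tilde\beta}$; since source and target are smooth and the maps are mutually inverse on $S$-points compatibly with scheme structure, $\Xi_{\tilde\beta}$ is an isomorphism onto the component $F_{W\tilde\beta}$. Replacing $\tilde\beta$ by $w\tilde\beta$ conjugates $P_{\tilde\beta}$ and carries $V_{\tilde\beta}$ by $w$, yielding an isomorphic quotient and the same image, so $F_{W\tilde\beta}$ depends only on the $W$-class. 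For the square, $ev_\bullet$ of a fixed quasimap is $u_{\mathrm{reg}}(\mathbb{P}^1) = [v_0]\in V^s(\mathbf{G})/\mathbf{G}$, the image of $[v_0]\in V^s(\mathbf{G})/P_{\tilde\beta}$ under the natural projection $h$, while $i$ is the inclusion $(V_{\tilde\beta}\cap V^s(\mathbf{G}))/P_{\tilde\beta}\hookrightarrow V^s(\mathbf{G})/P_{\tilde\beta}$; commutativity is then immediate. Finally, because $\mathbf{S}$ commutes with $\rho(\mathbf{G})$ and the weights $\xi_1,\dots,\xi_n$ were chosen $\mathbf{S}$-equivariantly, every object produced — $\mathcal{P}$, $V_{\tilde\beta}$, $P_{\tilde\beta}$, $\Xi_{\tilde\beta}$, $h$, $i$ — is $\mathbf{S}$-equivariant.

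The main obstacle is the combination of the last two steps: making the dictionary "$\mathbb{C}^*$-fixed quasimap of $\mathbf{T}$-degree $W\tilde\beta$" $\leftrightarrow$ "point of $(V_{\tilde\beta}\cap V^s(\mathbf{G}))/P_{\tilde\beta}$" work in families, so that $\Xi_{\tilde\beta}$ is a genuine isomorphism of stacks rather than merely a bijection on closed points, and pinning down that the residual gauge symmetry of the standard $\mathbb{C}^*$-fixed configuration is precisely the parabolic $P_{\tilde\beta}$ — neither larger nor smaller. This is exactly the delicate analysis carried out in \cite{webb2018abelian}, and I would follow it.
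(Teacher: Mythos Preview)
The paper does not give its own proof of this proposition: it is quoted from \cite{webb2018abelian}, and the paper only supplements the statement with a short description of the objects, namely that $V_{\tilde\beta}$ is the image under $ev_\bullet$ of the space $\Gamma_{\tilde\beta}$ of sections whose polynomial representation depends only on $x_0$, and that $P_{\tilde\beta}$ is the image of $\operatorname{Aut}(\mathcal P)\to\operatorname{Aut}(\mathcal P\times_{\mathbf G}V)$, so that the content is the identification $\Gamma_{\tilde\beta}^s/\operatorname{Aut}_{\tilde\beta}\cong V_{\tilde\beta}^s/P_{\tilde\beta}$. Your sketch is a correct outline of Webb's argument and matches this description; your cocharacter-limit definition of $P_{\tilde\beta}$ and your ``non-negative weight'' description of $V_{\tilde\beta}$ are exactly the standard reformulations of the paper's $\operatorname{Aut}(\mathcal P)$ and $\Gamma_{\tilde\beta}$ descriptions, so there is no substantive difference in approach.
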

more precisely, let $\Gamma_{\tilde{\beta}}$ be the subspace of $\Gamma(\mathcal{P} \times_{\mathbf{G}} V )$ of sections whose polynomial representations depend only on $x_0$, $V_{\tilde{\beta}}$ is the image of $\Gamma_{\tilde{\beta}}$ under the evaluation map $ev_{\bullet}$, let $\rm{Aut}_{\tilde{\beta}}$ be the image of $\operatorname { Aut } ( \mathcal { P } ) \rightarrow \operatorname { Aut } \left( \mathcal { P } \times _ \mathbf{ G } V \right)$, acts on $\Gamma_{\tilde{\beta}}$ by left multiplication, and $ev_{\bullet}$ identifies $\rm{Aut}_{\tilde{\beta}}$ with a parabolic subgroup of $\mathbf{G}$, denoted by $P_{\tilde{\beta}}$. Then the above proposition just says $\Gamma _ { \tilde { \beta } } ^ { s } / \mathrm { Aut } _ { \tilde { \beta } } \cong V _ { \tilde { \beta } } ^ { s } / P$.

\subsection{Push-forward formula of flag bundles}
We will need this push-forward formula of flag bundles in the following subsections. Let's first recall some properties about line bundles associated with characters \cite{tu2010computing}. Suppose the maximal torus $\mathbf{T}$ acts freely on the right on a topological space $X$ so that $X \rightarrow X/\mathbf{T}$ is a principal $\mathbf{T}$ -bundle. As in 2.1, for $\theta \in \chi(\mathbf{T})$, we can associate a line bundle on $X/\mathbf{T}$ by
\begin{align*}
L_{\theta} := X \times_{\mathbf{T}} \mathbb{C}_{\theta}
\end{align*}
The Weyl group of a maximal torus $\mathbf{T}$ in the compact, connected Lie group $\mathbf{G}$ is $W = N_\mathbf{G}(\mathbf{T} )/\mathbf{T}$, where $N_\mathbf{G}(\mathbf{T} )/\mathbf{T}$ is the normalizer of $\mathbf{T}$ in $\mathbf{G}$. The Weyl group is a finite reflection group. And the Weyl group $W$ acts on the character of $\mathbf{T}$ by
\begin{align*}
( w \cdot \theta ) ( t ) = \theta \left( w ^ { - 1 } t w \right)
\end{align*}
acts on the right on $X/\mathbf{T}$ by
\begin{align*}
r _ { w } ( x \mathbf{T} ) = ( x \mathbf{T} ) w = x w \mathbf{T} 
\end{align*}
Then we have
\begin{proposition}\cite{tu2010computing} 
The action of the Weyl group $W$ on the associated line bundles over $X/\mathbf{T}$ is compatible with its action on the characters of $\mathbf{T}$; more precisely, for $w \in W$ and $\theta \in  \chi(\mathbf{T})$,
\begin{align*}
w \cdot L_{\theta} = r _ { w } ^ { * } L_{\theta} \cong L_{w \cdot \theta}
\end{align*}
\end{proposition}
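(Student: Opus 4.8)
The plan is to verify the claimed isomorphism fiberwise after writing the associated-bundle construction out in coordinates; the only substantive input is the definition of the Weyl action on $\chi(\mathbf{T})$ recalled just above. Write $L_\theta=(X\times\mathbb{C})/\mathbf{T}$, where $t\in\mathbf{T}$ acts by $(x,z)\cdot t=(xt,\theta(t)^{-1}z)$, so that a point of $L_\theta$ over $x\mathbf{T}$ is a class $[x,z]$ subject to $[xt,\theta(t)^{-1}z]=[x,z]$, with bundle projection $[x,z]\mapsto x\mathbf{T}$. Fix a representative $n\in N_{\mathbf{G}}(\mathbf{T})$ of $w$; since $n$ normalises $\mathbf{T}$, the assignment $r_w(x\mathbf{T})=xn\mathbf{T}$ descends to a well-defined map $X/\mathbf{T}\to X/\mathbf{T}$, and by definition $r_w^{*}L_\theta$ has fiber $(L_\theta)_{r_w(p)}$ over $p$.

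First I would write down the candidate morphism
\[
\Phi\colon r_w^{*}L_\theta\longrightarrow L_{w\cdot\theta},\qquad \Phi\bigl(x\mathbf{T},\,[xn,z]\bigr)=[x,z]_{w\cdot\theta},
\]
where $[\ ,\ ]_{w\cdot\theta}$ denotes classes in $L_{w\cdot\theta}$. This covers the identity of $X/\mathbf{T}$ and is manifestly linear on the one-dimensional fibers, so it remains only to check that it is well defined and bijective; being assembled from the continuous (resp. holomorphic) structure maps, it is then automatically an isomorphism of line bundles.

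Then I would check well-definedness. Replacing the chosen representative $x$ of $p$ by $xt$ with $t\in\mathbf{T}$, a given element of $(L_\theta)_{r_w(p)}$ is rewritten as $[xtn,z']=[xn\,(n^{-1}tn),z']=[xn,\theta(n^{-1}tn)z']$, so the recipe applied with the anchor $x$ returns $[x,\theta(n^{-1}tn)z']_{w\cdot\theta}$, while applied with the anchor $xt$ it returns $[xt,z']_{w\cdot\theta}=[x,(w\cdot\theta)(t)z']_{w\cdot\theta}$. These agree for all $z'$ precisely because $(w\cdot\theta)(t)=\theta(n^{-1}tn)$, which is the definition of the Weyl action $w\cdot\theta$. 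The same identity shows that replacing $n$ by $ns$ ($s\in\mathbf{T}$) alters $\Phi$ only by an overall nonzero scalar, hence not the isomorphism class, and that the evident candidate inverse $[x,z]_{w\cdot\theta}\mapsto(x\mathbf{T},[xn,z])$ is also well defined. Therefore $\Phi$ is an isomorphism $r_w^{*}L_\theta\xrightarrow{\ \sim\ }L_{w\cdot\theta}$ over $X/\mathbf{T}$.

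I do not anticipate a genuine obstacle: the whole argument is bookkeeping of the right $\mathbf{T}$-action on $X$ and the twist by $\theta^{-1}$ in the Borel construction, and the one point that really needs checking — matching the two equivalence relations under a change of local representative — is resolved verbatim by the formula $(w\cdot\theta)(t)=\theta(w^{-1}tw)$. Equivalently, and perhaps more transparently, one can argue without coordinates: the lift $\widetilde r_w\colon X\to X$, $x\mapsto xn$, intertwines the right $\mathbf{T}$-action on the domain with the $\mathbf{T}$-action on $r_w^{*}X$ through the automorphism $s\mapsto n^{-1}sn$ of $\mathbf{T}$, so $r_w^{*}X$ is $X$ with its structure-group action reparametrised by that automorphism; associating $\mathbb{C}_\theta$ then replaces the character $\theta$ by $s\mapsto\theta(n^{-1}sn)=w\cdot\theta$, giving $r_w^{*}L_\theta\cong L_{w\cdot\theta}$ with no computation.
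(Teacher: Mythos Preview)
Your argument is correct. The paper does not supply its own proof of this proposition; it is quoted from \cite{tu2010computing} as background for the push-forward formula in Lemma~3.2, so there is nothing to compare against beyond noting that your direct fiberwise verification via the Borel construction is exactly the standard proof one finds in that reference. The one computation that matters---matching the two equivalence relations after changing the local representative $x\mapsto xt$---reduces, as you say, to the defining identity $(w\cdot\theta)(t)=\theta(n^{-1}tn)$, and your coordinate-free rephrasing at the end (pulling back the principal bundle reparametrises the structure-group action by conjugation, hence twists the associated character) is the cleanest way to see it.
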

Consider the action of $\mathbf{T}$ on $\mathbf{G}/\mathbf{H}$ by left multiplication, where $\mathbf{H}$ is a closed subgroup of $\mathbf{G}$ containing $\mathbf{T}$, we have 
\begin{proposition}\cite{tu2010computing}
The fixed point set $F$ of the action is $W_{\mathbf{G}}/W_{\mathbf{H}} = N_{\mathbf{G}}(\mathbf{T})/N_{\mathbf{H}}(\mathbf{T})$.
\end{proposition}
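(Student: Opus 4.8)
The plan is to unwind the definition of a fixed point and then reduce to conjugacy of maximal tori inside $\mathbf{H}$. First I would observe that a coset $g\mathbf{H}$ is fixed by the left $\mathbf{T}$-action iff $tg\mathbf{H}=g\mathbf{H}$ for every $t\in\mathbf{T}$, which is equivalent to the containment $g^{-1}\mathbf{T}g\subseteq\mathbf{H}$. This converts the problem into classifying those $g$ for which the conjugate torus $g^{-1}\mathbf{T}g$ lands inside $\mathbf{H}$.

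Next I would note that $g^{-1}\mathbf{T}g$ is a maximal torus of $\mathbf{G}$, so if it lies in $\mathbf{H}$ it is in particular a maximal torus of $\mathbf{H}$; and $\mathbf{T}$ itself, being maximal in $\mathbf{G}$ and contained in $\mathbf{H}$, is a maximal torus of $\mathbf{H}$. By conjugacy of maximal tori in $\mathbf{H}$ there is $h\in\mathbf{H}$ with $(gh)^{-1}\mathbf{T}(gh)=h^{-1}g^{-1}\mathbf{T}gh=\mathbf{T}$, i.e.\ $gh\in N_{\mathbf{G}}(\mathbf{T})$, and $g\mathbf{H}=(gh)\mathbf{H}$. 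Conversely any $n\mathbf{H}$ with $n\in N_{\mathbf{G}}(\mathbf{T})$ is fixed, since $n^{-1}\mathbf{T}n=\mathbf{T}\subseteq\mathbf{H}$. Hence the fixed locus $F$ is exactly the image of $N_{\mathbf{G}}(\mathbf{T})$ under the projection $\mathbf{G}\to\mathbf{G}/\mathbf{H}$.

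Finally I would identify this image as a homogeneous space: the map $N_{\mathbf{G}}(\mathbf{T})\to\mathbf{G}/\mathbf{H}$ has fibers the cosets of $N_{\mathbf{G}}(\mathbf{T})\cap\mathbf{H}=N_{\mathbf{H}}(\mathbf{T})$, so $F\cong N_{\mathbf{G}}(\mathbf{T})/N_{\mathbf{H}}(\mathbf{T})$; dividing numerator and denominator by $\mathbf{T}$ (which lies in both normalizers) gives $F\cong W_{\mathbf{G}}/W_{\mathbf{H}}$. The one delicate point — essentially the only obstacle — is the appeal to conjugacy of maximal tori \emph{inside} $\mathbf{H}$; this is the standard theorem when $\mathbf{H}$ is connected, which is the case in all our applications (where $\mathbf{H}$ is a parabolic or its Levi factor), so I would either assume $\mathbf{H}$ connected outright or, in the disconnected case, run the argument component-by-component using that $\mathbf{H}^{\circ}$ already contains a maximal torus of $\mathbf{H}$.
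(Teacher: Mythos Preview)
Your argument is correct and is the standard proof of this fact. Note that the paper does not actually supply a proof of this proposition: it is simply quoted from \cite{tu2010computing}, so there is nothing in the paper to compare against beyond the bare citation. Your write-up matches the argument Tu gives (translating the fixed-point condition into $g^{-1}\mathbf{T}g\subset\mathbf{H}$, then invoking conjugacy of maximal tori in $\mathbf{H}$), and your caveat about connectedness of $\mathbf{H}$ is appropriate and handled correctly for the applications at hand.
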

and from the following lemma, we know the restriction of a character line bundle to the fixed point $w \in \mathbf{G}/\mathbf{H}$
\begin{lemma}\cite{tu2010computing}
At the fixed point $w = x\mathbf{H} \in W_{\mathbf{G}}/W_{\mathbf{H}}$, the torus $\mathbf{T}$ acts on the fiber of the line bundle $L_{\theta}$ as the representation $w \cdot \theta $, i.e., $(L_{\theta} )_{w} = \mathbb{C}_{w \cdot \theta}$.
\end{lemma}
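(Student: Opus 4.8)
The plan is to read off the $\mathbf{T}$-weight of the fiber $(L_{\theta})_{w}$ directly from the balanced-product description of $L_{\theta}$, after choosing a representative of the fixed point $w$ inside the normalizer $N_{\mathbf{G}}(\mathbf{T})$. First, by Proposition 3.3 — identifying the fixed locus of the left $\mathbf{T}$-action on $\mathbf{G}/\mathbf{H}$ with $W_{\mathbf{G}}/W_{\mathbf{H}} = N_{\mathbf{G}}(\mathbf{T})/N_{\mathbf{H}}(\mathbf{T})$ — I may write a fixed point as $w = \dot{w}\mathbf{H}$ with $\dot{w} \in N_{\mathbf{G}}(\mathbf{T})$ a lift of the Weyl coset $w$. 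Next I would unwind the definition of the line bundle: writing $L_{\theta} = \mathbf{G} \times_{\mathbf{H}} \mathbb{C}_{\theta} = (\mathbf{G} \times \mathbb{C})/\!\sim$ with $(g,z) \sim (gh,\,\theta(h)^{-1}z)$ and projection $[g,z] \mapsto g\mathbf{H}$, the fiber over $w$ is $\{\,[\dot{w},z] : z \in \mathbb{C}\,\} \cong \mathbb{C}$, while the residual left $\mathbf{T}$-action on $\mathbf{G}/\mathbf{H}$ lifts to $s \cdot [g,z] = [sg,z]$ for $s \in \mathbf{T}$. (Since $\theta$ is only ever evaluated on elements of $\mathbf{T} \subseteq \mathbf{H}$, the computation makes sense already for $\theta \in \chi(\mathbf{T})$.)

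The key step is then a one-line computation on the fiber: for $s \in \mathbf{T}$,
\[
 s \cdot [\dot{w}, z] = [s\dot{w}, z] = \bigl[\dot{w}\,(\dot{w}^{-1}s\dot{w}),\, z\bigr] = \bigl[\dot{w},\ \theta(\dot{w}^{-1}s\dot{w})\,z\bigr],
\]
where the middle equality is trivial and the last uses $\dot{w}^{-1}s\dot{w} \in \mathbf{T} \subseteq \mathbf{H}$ (because $\dot{w}$ normalizes $\mathbf{T}$) together with the equivalence relation. By the definition $(w \cdot \theta)(t) = \theta(w^{-1}tw)$ of the Weyl action on characters recalled above, $\theta(\dot{w}^{-1}s\dot{w}) = (w \cdot \theta)(s)$, so $\mathbf{T}$ scales $(L_{\theta})_{w}$ by the character $w \cdot \theta$, i.e. $(L_{\theta})_{w} = \mathbb{C}_{w \cdot \theta}$. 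To close the argument I would check independence of the lift: replacing $\dot{w}$ by $\dot{w}t_{0}$ with $t_{0} \in \mathbf{T}$ leaves $\dot{w}^{-1}s\dot{w}$ unchanged since $\mathbf{T}$ is abelian, and replacing $\dot{w}$ by $\dot{w}n_{0}$ with $n_{0} \in N_{\mathbf{H}}(\mathbf{T})$ alters $w \cdot \theta$ only by the $W_{\mathbf{H}}$-action, which fixes $\theta$ (as $\theta$ extends to a character of $\mathbf{H}$, equivalently is $W_{\mathbf{H}}$-invariant); hence $w \cdot \theta$ depends only on the coset $w \in W_{\mathbf{G}}/W_{\mathbf{H}}$, that is, only on the fixed point.

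I do not expect any substantial obstacle here: the whole content is the bookkeeping of the two distinct $\mathbf{T}$-actions (the right action used to build $L_{\theta}$ versus the left action on $\mathbf{G}/\mathbf{H}$) and of the sign conventions. The one place that genuinely requires care is matching the inverse in $(g,z) \sim (gh,\,\theta(h)^{-1}z)$ against the convention $(w \cdot \theta)(t) = \theta(w^{-1}tw)$, so that the Weyl element comes out as $w$ rather than $w^{-1}$; an inconsistent choice would flip the Weyl element and then propagate incorrectly into the Weyl-symmetrization (push-forward) formulas used in the remainder of this subsection.
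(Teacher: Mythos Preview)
Your argument is correct and is exactly the standard computation: pick a lift $\dot w\in N_{\mathbf G}(\mathbf T)$, use $\dot w^{-1}s\dot w\in\mathbf T\subseteq\mathbf H$ to pass through the balanced-product relation, and read off the weight as $\theta(\dot w^{-1}s\dot w)=(w\cdot\theta)(s)$. The well-definedness check for the coset in $W_{\mathbf G}/W_{\mathbf H}$ is also handled correctly.

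There is nothing to compare against in the paper itself: the lemma is simply quoted from \cite{tu2010computing} with no proof given here. Your write-up is essentially the argument one finds in that reference. One small clarification worth making explicit: for $L_\theta$ to be a line bundle on $\mathbf G/\mathbf H$ in the first place, $\theta$ must be (or extend to) a character of $\mathbf H$, not merely of $\mathbf T$; your parenthetical remark that ``$\theta$ is only ever evaluated on elements of $\mathbf T$'' is true for the fiber computation but slightly undersells the hypothesis needed for $L_\theta=\mathbf G\times_{\mathbf H}\mathbb C_\theta$ to be defined. You do use this later when invoking $W_{\mathbf H}$-invariance of $\theta$, so the logic is fine; just state the hypothesis up front.
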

Suppose the adjoint representation of $\mathbf{T}$ on $\mathfrak{g}$ decomposes $\mathfrak{g}$ into a direct sum
\begin{align*}
\mathfrak { g } = \mathfrak { t } \oplus \left( \bigoplus _ { \alpha \in R ^ { + } } \mathbb { C } _ { \alpha } \right)
\end{align*}
where $ R ^ { + }$ is a choice of positive roots, so by the above proposition the tangent bundle of $\mathbf{G}/\mathbf{T}$ is 
\begin{align*}
T ( \mathbf{G}/\mathbf{T} ) \simeq \mathbf{G} \times _ { \mathbf{T} } ( \mathfrak { g } / \mathfrak { t } ) \simeq \mathbf{G} \times _ { \mathbf{T} } \left( \bigoplus _ { \alpha \in R ^ + } \mathbb { C } _ { \alpha } \right) \simeq \bigoplus _ { \alpha \in R ^ { + } } L _ { \alpha }
\end{align*}
Since $\mathbf{T} \subset \mathbf{H}$, the representation of $\mathbf{H}$ can restrict to a representation of $ \mathbf{T} $, and let $ \sigma : \mathbf{G}/\mathbf{T} \rightarrow \mathbf{G}/\mathbf{H} $ be the projection, then
\begin{proposition}\cite{tu2010computing}
Under $\sigma$ the associated bundle $\mathbf{G} \times_{\mathbf{H}} V$ pulls back to $\mathbf{G} \times_{\mathbf{T}} V$:
\begin{align*}
\sigma ^ { * } \left( \mathbf{G} \times _ { \mathbf{H} } V \right) \simeq \mathbf{G} \times _ { \mathbf{T} } V
\end{align*}
\end{proposition}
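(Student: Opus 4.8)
The plan is to verify the asserted isomorphism directly from the definitions of associated and pullback bundles, and then to observe that the map constructed is algebraic and covers the identity, so that it is an isomorphism of bundles over $\mathbf{G}/\mathbf{T}$. Recall that $\sigma^{*}(\mathbf{G}\times_{\mathbf{H}}V)$ is by definition the fiber product $(\mathbf{G}/\mathbf{T})\times_{\mathbf{G}/\mathbf{H}}(\mathbf{G}\times_{\mathbf{H}}V)$, whose points are pairs $(g\mathbf{T},\xi)$ with $\xi\in\mathbf{G}\times_{\mathbf{H}}V$ lying over $\sigma(g\mathbf{T})=g\mathbf{H}$, while $\mathbf{G}\times_{\mathbf{T}}V$ consists of classes $[g,v]_{\mathbf{T}}$ under $(gt,t^{-1}v)\sim(g,v)$ for $t\in\mathbf{T}$.

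First I would write down the natural map $\Phi:\mathbf{G}\times_{\mathbf{T}}V\to\sigma^{*}(\mathbf{G}\times_{\mathbf{H}}V)$ by $[g,v]_{\mathbf{T}}\mapsto(g\mathbf{T},[g,v]_{\mathbf{H}})$. Because $\mathbf{T}\subset\mathbf{H}$, replacing $(g,v)$ by $(gt,t^{-1}v)$ with $t\in\mathbf{T}$ changes neither the coset $g\mathbf{T}$ nor the class $[g,v]_{\mathbf{H}}$, so $\Phi$ is well defined; moreover $(g\mathbf{T},[g,v]_{\mathbf{H}})$ really lies in the fiber product since $[g,v]_{\mathbf{H}}$ projects to $g\mathbf{H}=\sigma(g\mathbf{T})$.

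Next I would construct the inverse $\Psi$. Given a point $(g\mathbf{T},\xi)$ of the fiber product, pick the representative $g$ of the coset; since $\xi$ lies over $g\mathbf{H}$ we may write $\xi=[g,v]_{\mathbf{H}}$ for a unique $v\in V$ (uniqueness because $gh=g$ forces $h=e$), and I set $\Psi(g\mathbf{T},\xi)=[g,v]_{\mathbf{T}}$. The one thing to check is independence of the choice of representative: replacing $g$ by $gt$, $t\in\mathbf{T}$, forces the new $v'$ to satisfy $[gt,v']_{\mathbf{H}}=[g,tv']_{\mathbf{H}}=[g,v]_{\mathbf{H}}$, hence $v'=t^{-1}v$, and $(g,v)\sim(gt,t^{-1}v)$ in $\mathbf{G}\times_{\mathbf{T}}V$, so $\Psi$ is well defined. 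One then checks that $\Phi$ and $\Psi$ are mutually inverse, that both cover the identity on $\mathbf{G}/\mathbf{T}$, are equivariant for the relevant torus action, and are morphisms of varieties, which gives the claimed isomorphism.

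A more conceptual variant uses the functoriality of associated bundles: pullback commutes with forming associated bundles, so $\sigma^{*}(\mathbf{G}\times_{\mathbf{H}}V)\cong(\sigma^{*}\mathbf{G})\times_{\mathbf{H}}V$; and since $\mathbf{T}\subset\mathbf{H}$, the pulled-back principal $\mathbf{H}$-bundle $\sigma^{*}\mathbf{G}$ over $\mathbf{G}/\mathbf{T}$ is exactly the structure-group extension $\mathbf{G}\times_{\mathbf{T}}\mathbf{H}$ of the principal $\mathbf{T}$-bundle $\mathbf{G}\to\mathbf{G}/\mathbf{T}$, whence $(\sigma^{*}\mathbf{G})\times_{\mathbf{H}}V\cong(\mathbf{G}\times_{\mathbf{T}}\mathbf{H})\times_{\mathbf{H}}V\cong\mathbf{G}\times_{\mathbf{T}}V$ by the usual cancellation. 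Either way there is no genuine obstacle here; the content is purely bookkeeping with the two equivalence relations, and the only place requiring care—the well-definedness of the inverse $\Psi$ under the choice of coset representative—is precisely where the hypothesis $\mathbf{T}\subset\mathbf{H}$ enters.
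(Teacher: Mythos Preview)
Your argument is correct: the explicit mutually inverse maps $\Phi$ and $\Psi$ are well defined precisely because $\mathbf{T}\subset\mathbf{H}$, and the conceptual variant via extension of structure group is the standard way to package the same computation. There is nothing to compare against here, since the paper does not supply its own proof but simply cites the result from \cite{tu2010computing}; your direct verification is exactly the kind of argument one finds in that reference.
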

\begin{remark}
The same for pullback $\sigma: X/\mathbf{T} \rightarrow X/\mathbf{H}$.
\end{remark}
So by the Proposition 3.5 and 3.6 we can get the pulling back the tangent bundle of $\mathbf{G}/\mathbf{H}$ to $\mathbf{G}/\mathbf{T}$. 
\begin{proposition}\cite{tu2010computing}
Under the natural projection $ \sigma : \mathbf{G}/\mathbf{T} \rightarrow \mathbf{G}/\mathbf{H} $, the tangent bundle $T(\mathbf{G}/\mathbf{H})$ pulls back to a sum of associated line bundles:
\begin{align*}
\sigma ^ { * } T ( \mathbf{G}/\mathbf{H} ) \simeq \bigoplus _ { \alpha \in R^{+} \backslash R ^ { + } ( \mathbf{H} ) } L _ { \alpha }
\end{align*}
\end{proposition}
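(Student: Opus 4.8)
The plan is to reduce the statement to the fibre over the identity coset and then transport the root-space decomposition of $\mathfrak{g}$ through the associated-bundle constructions that are already in place. First I would write the tangent bundle of the homogeneous space in the standard form $T(\mathbf{G}/\mathbf{H}) \simeq \mathbf{G}\times_{\mathbf{H}}(\mathfrak{g}/\mathfrak{h})$, where $\mathbf{H}$ acts on $\mathfrak{g}/\mathfrak{h}$ through the adjoint representation. Since $\mathbf{T}\subset\mathbf{H}$, the subalgebra $\mathfrak{h}$ is stable under $\operatorname{Ad}(\mathbf{T})$ and contains $\mathfrak{t}$, so it is the direct sum of $\mathfrak{t}$ with a union of root spaces; writing $R^{+}(\mathbf{H})$ for the positive roots of $\mathbf{H}$ (relative to $\mathbf{T}$ and the fixed ordering), this gives $\mathfrak{h}/\mathfrak{t}\simeq \bigoplus_{\alpha\in R^{+}(\mathbf{H})}\mathbb{C}_{\alpha}$ as $\mathbf{T}$-representations, and hence $\mathfrak{g}/\mathfrak{h}\simeq\bigoplus_{\alpha\in R^{+}\setminus R^{+}(\mathbf{H})}\mathbb{C}_{\alpha}$ as $\mathbf{T}$-representations, using the decomposition of $\mathfrak{g}/\mathfrak{t}$ recalled above in the computation of $T(\mathbf{G}/\mathbf{T})$.

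Next I would invoke the pullback formula for associated bundles already established above: under $\sigma:\mathbf{G}/\mathbf{T}\rightarrow\mathbf{G}/\mathbf{H}$ one has $\sigma^{*}(\mathbf{G}\times_{\mathbf{H}}W)\simeq\mathbf{G}\times_{\mathbf{T}}W$ for any $\mathbf{H}$-representation $W$, where on the right only the restriction of the $\mathbf{H}$-action to $\mathbf{T}$ is used. Applying this with $W=\mathfrak{g}/\mathfrak{h}$ yields $\sigma^{*}T(\mathbf{G}/\mathbf{H})\simeq\mathbf{G}\times_{\mathbf{T}}(\mathfrak{g}/\mathfrak{h})$. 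Since the functor $\mathbf{G}\times_{\mathbf{T}}(-)$ is additive in the representation, the decomposition of the previous paragraph splits this as $\bigoplus_{\alpha\in R^{+}\setminus R^{+}(\mathbf{H})}\mathbf{G}\times_{\mathbf{T}}\mathbb{C}_{\alpha}$, and each summand $\mathbf{G}\times_{\mathbf{T}}\mathbb{C}_{\alpha}$ is by definition the character line bundle $L_{\alpha}$ on $\mathbf{G}/\mathbf{T}$ --- the same associated-line-bundle construction $X\times_{\mathbf{T}}\mathbb{C}_{\theta}$ used throughout, with $X=\mathbf{G}$. Assembling the pieces gives $\sigma^{*}T(\mathbf{G}/\mathbf{H})\simeq\bigoplus_{\alpha\in R^{+}\setminus R^{+}(\mathbf{H})}L_{\alpha}$, which is the claim.

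The only genuinely non-formal point, and the one I would treat with care, is the bookkeeping of the index set: one must check that the positive roots of the subgroup $\mathbf{H}$ are exactly $R(\mathbf{H})\cap R^{+}$, so that their complement inside $R^{+}$ is $R^{+}\setminus R^{+}(\mathbf{H})$, and that after fixing compatible invariant complex structures on $\mathbf{G}/\mathbf{T}$ and $\mathbf{G}/\mathbf{H}$ the quotient $\mathfrak{g}/\mathfrak{h}$ really is the span of those ``missing'' positive root spaces (and not of the negative ones, nor a twisted version). This is where one uses that only the identity component of $\mathbf{H}$ matters for $T(\mathbf{G}/\mathbf{H})$, so $\mathbf{H}$ may be taken connected, together with the compatibility of the chosen ordering of $R$ with the complex structures. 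Once the index set is pinned down, the remainder is a formal consequence of the additivity of $\mathbf{G}\times_{\mathbf{T}}(-)$ and the already-proven identity $\sigma^{*}(\mathbf{G}\times_{\mathbf{H}}W)\simeq\mathbf{G}\times_{\mathbf{T}}W$.
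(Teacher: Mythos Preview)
Your proposal is correct and follows precisely the route the paper indicates: the paper does not write out a proof but simply remarks that the result follows from the previous proposition $\sigma^{*}(\mathbf{G}\times_{\mathbf{H}}V)\simeq\mathbf{G}\times_{\mathbf{T}}V$ together with the root-space decomposition of $\mathfrak{g}/\mathfrak{t}$ already displayed for $T(\mathbf{G}/\mathbf{T})$, which is exactly what you do. Your extra care about matching up $R^{+}(\mathbf{H})$ with $R(\mathbf{H})\cap R^{+}$ and about compatible complex structures is a welcome elaboration of a point the paper (and the cited source) leave implicit.
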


If we take $\mathbf{H}$ to be Borel subgroup $\mathbf{B}$ in $\mathbf{G}$, since $ \mathbf{T} \cong \mathbf{B}/[\mathbf{B},\mathbf{B}] $, then the character group of $\mathbf{B}$, denoted by $\chi(\mathbf{B})$, is isomorphic to the character group $\chi(\mathbf{T})$ of $\mathbf{T}$, and since $\mathbf{B}$ is solvable, by Lie-Kolchin theorem all irreducible representations of $\mathbf{B}$ are one-dimensional, therefore, the representation rings of $\mathbf{B}$ and $\mathbf{T}$ are the same, i.e. $R(\mathbf{B}) \cong R(\mathbf{T})$. 

As above, we denote by $R$ the root system of $(\mathbf{G},\mathbf{T})$ and by $R^+$ the set of positive roots in an unusual choice of positive roots for $(\mathbf{G},\mathbf{T})$ by declaring the weights of the adjoint $\mathbf{T}$-action on Lie algebra $\mathfrak{b}$ of $\mathbf{B}$ to be the negative roots, this unusual choice is called the geometric choice, see 6.1.3\cite{chriss2009representation} for details. 

Let $\mathbf{P} \supset \mathbf{B} $ be a parabolic subgroup of $\mathbf{G}$ and let $L$ be the Levi subgroup of $\mathbf{P}$ containing $\mathbf{T}$, with root system $R_L$ and Weyl group $W_L$. We have the push forward and pullback formula from the following diagram

\begin{center}
\begin{tikzcd} 
 & V^s(\mathbf{G})/\mathbf{T} \ar{ldd}[swap]{\bar{\psi}} \ar{rdd}{\psi} &\\
 & & & \\
V^s(\mathbf{G})/\mathbf{P} \ar{rr}{h}
&
& V^s(\mathbf{G})/\mathbf{G}
\end{tikzcd}
\end{center}

\begin{lemma}
For any $V \in R(\mathbf{P})$, generated by positive roots, we have
\begin{align*}
\psi ^ { * } h _ { * } \left( E _ { V } \right) = \sum _ { w \in W / W _ { L } } w \left[ \frac { \bar{\psi} ^ { * } \left( E _ { V } \right) } { \prod _ { \alpha \in R ^ { + } \backslash R _ { L } } \left( 1 - L _ { \alpha } ^ { \vee } \right) } \right]
\end{align*}
\end{lemma}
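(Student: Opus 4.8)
The plan is to reduce the formula to an instance of the K-theoretic localization formula (the correspondence of residues, Lemma 2.3) applied to a \emph{fibrewise} torus action, using exactly the geometry of $\mathbf{T}$ acting on $\mathbf{G}/\mathbf{P}$ by left multiplication that was set up in the preceding propositions.

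First I would pull back the $\mathbf{G}/\mathbf{P}$-bundle $h\colon V^{s}(\mathbf{G})/\mathbf{P}\to V^{s}(\mathbf{G})/\mathbf{G}$ along $\psi$, i.e.\ form the fibre product $Z:=\bigl(V^{s}(\mathbf{G})/\mathbf{T}\bigr)\times_{V^{s}(\mathbf{G})/\mathbf{G}}\bigl(V^{s}(\mathbf{G})/\mathbf{P}\bigr)$ with projections $p_{1}\colon Z\to V^{s}(\mathbf{G})/\mathbf{T}$ and $p_{2}\colon Z\to V^{s}(\mathbf{G})/\mathbf{P}$. Since $\psi$ is smooth (a flag bundle) and $h$ is proper, flat base change gives $\psi^{*}h_{*}(E_{V})=(p_{1})_{*}p_{2}^{*}(E_{V})$. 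A direct computation with the quotient presentations identifies $Z\cong V^{s}(\mathbf{G})\times_{\mathbf{T}}(\mathbf{G}/\mathbf{P})$, where $\mathbf{T}$ acts on $\mathbf{G}/\mathbf{P}$ by left multiplication; because $\mathbf{T}$ is abelian this left action commutes with the structure group and hence descends to a genuine $\mathbf{T}$-action on $Z$ covering the trivial action on $V^{s}(\mathbf{G})/\mathbf{T}$.

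Next I would describe the $\mathbf{T}$-fixed locus of $Z$ together with the data attached to it. By the fixed-point computation for $\mathbf{T}$ acting on $\mathbf{G}/\mathbf{P}$ by left multiplication, $(\mathbf{G}/\mathbf{P})^{\mathbf{T}}=W_{\mathbf{G}}/W_{\mathbf{P}}=W/W_{L}$, so $Z^{\mathbf{T}}=\coprod_{w\in W/W_{L}}s_{w}\bigl(V^{s}(\mathbf{G})/\mathbf{T}\bigr)$, where for a lift $\dot w\in N_{\mathbf{G}}(\mathbf{T})$ the section is $s_{w}\colon x\mathbf{T}\mapsto[x,\dot w\mathbf{P}]$. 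One checks that $p_{2}\circ s_{w}=\bar\psi\circ r_{w}$, where $r_{w}$ is the right Weyl-group action on $V^{s}(\mathbf{G})/\mathbf{T}$, so $s_{w}^{*}p_{2}^{*}(E_{V})=r_{w}^{*}\bar\psi^{*}(E_{V})=w[\bar\psi^{*}(E_{V})]$. The relative normal bundle of $s_{w}$ in $Z/\bigl(V^{s}(\mathbf{G})/\mathbf{T}\bigr)$ is $V^{s}(\mathbf{G})\times_{\mathbf{T}}T_{\dot w\mathbf{P}}(\mathbf{G}/\mathbf{P})$; by the tangent-bundle pull-back formula, $\sigma^{*}T(\mathbf{G}/\mathbf{P})\simeq\bigoplus_{\alpha\in R^{+}\setminus R_{L}}L_{\alpha}$ (with the geometric choice of positive roots), so $T_{\dot w\mathbf{P}}(\mathbf{G}/\mathbf{P})$ has $\mathbf{T}$-weights $w(R^{+}\setminus R_{L})$, the normal bundle is $\bigoplus_{\alpha\in R^{+}\setminus R_{L}}L_{w\alpha}=r_{w}^{*}\bigl(\bigoplus_{\alpha\in R^{+}\setminus R_{L}}L_{\alpha}\bigr)$, and $\lambda_{-1}\bigl(N_{w}^{\vee}\bigr)=w\bigl[\prod_{\alpha\in R^{+}\setminus R_{L}}(1-L_{\alpha}^{\vee})\bigr]$. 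Feeding all of this into the K-theoretic Atiyah--Bott formula (the general form of the correspondence of residues, for the fibrewise $\mathbf{T}$-action on $Z$ whose base carries the trivial action) applied to $\mathcal{F}=p_{2}^{*}(E_{V})$ yields
\begin{align*}
\psi^{*}h_{*}(E_{V})=(p_{1})_{*}p_{2}^{*}(E_{V})=\sum_{w\in W/W_{L}}\frac{s_{w}^{*}p_{2}^{*}(E_{V})}{\lambda_{-1}(N_{w}^{\vee})}=\sum_{w\in W/W_{L}}w\left[\frac{\bar\psi^{*}(E_{V})}{\prod_{\alpha\in R^{+}\setminus R_{L}}(1-L_{\alpha}^{\vee})}\right].
\end{align*}

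The main obstacle is the legitimacy of the localization step: the $\mathbf{T}$-action on $Z$ is trivial on the base, so a priori it only produces an identity of $\mathbf{T}$-equivariant classes, and one must argue it descends to the non-equivariant identity in the localization of $K^{0}\bigl(V^{s}(\mathbf{G})/\mathbf{T}\bigr)$ at the classes $1-L_{\alpha}^{\vee}$ --- this is precisely where the reduction of structure group to the abelian torus $\mathbf{T}$ is essential, since it identifies the equivariant parameters of $R(\mathbf{T})$ with the line bundles $L_{\alpha}$. If a fully self-contained argument is wanted I would instead reduce the whole statement, via the same base change and the identity $h_{*}(E_{V})=\sum_{i}(-1)^{i}\,V^{s}(\mathbf{G})\times_{\mathbf{G}}H^{i}(\mathbf{G}/\mathbf{P},\mathcal{V})$, to the representation-ring identity $\mathrm{res}^{\mathbf{G}}_{\mathbf{T}}\mathrm{ind}^{\mathbf{G}}_{\mathbf{P}}(V)=\sum_{w\in W/W_{L}}w\bigl[\mathrm{res}^{\mathbf{P}}_{\mathbf{T}}(V)/\prod_{\alpha\in R^{+}\setminus R_{L}}(1-e^{-\alpha})\bigr]$ in the localized ring $R(\mathbf{T})$; this follows by writing $\mathrm{ind}^{\mathbf{G}}_{\mathbf{P}}=\mathrm{ind}^{\mathbf{G}}_{\mathbf{B}}\circ\mathrm{res}^{\mathbf{P}}_{\mathbf{B}}$, applying Weyl-character/Atiyah--Bott localization on $\mathbf{G}/\mathbf{B}$ (fixed points $W$, normal weights $R^{+}$), and regrouping the sum over $W$ into $W/W_{L}$-cosets, pulling out the $W_{L}$-invariant factor $\prod_{\alpha\in R^{+}\setminus R_{L}}(1-e^{-\alpha})$ and recognising the remaining $W_{L}$-sum as $\mathrm{ind}^{L}_{\mathbf{B}_{L}}\mathrm{res}^{\mathbf{P}}_{\mathbf{B}_{L}}(V)=\mathrm{res}^{\mathbf{P}}_{L}(V)$. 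The hypothesis that $V$ is generated by positive roots is used only to ensure $E_{V}$ is an honest bundle, a sum of the line bundles $L_{\alpha}$, so that these manipulations remain inside the relevant subring.
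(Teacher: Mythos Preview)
Your proposal is correct, and in fact your fallback argument is exactly the paper's proof: the paper observes that $\psi^{*}h_{*}(E_{V})$ is the bundle on $V^{s}(\mathbf{G})/\mathbf{T}$ associated to the $\mathbf{T}$-module $\chi(\mathbf{G}/\mathbf{P},\mathbf{G}\times_{\mathbf{P}}V)$, computes that character by Atiyah--Bott localization for the left $\mathbf{T}$-action on $\mathbf{G}/\mathbf{P}$ (fixed points $W/W_{L}$, normal weights $R^{+}\setminus R_{L}$, fibre restrictions given by the Weyl action on $V$), and then passes back to associated bundles on $V^{s}(\mathbf{G})/\mathbf{T}$.

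Your primary route---base-changing to $Z\cong V^{s}(\mathbf{G})\times_{\mathbf{T}}(\mathbf{G}/\mathbf{P})$ and running localization fibrewise---is a more geometric packaging of the same computation. The gain is that it makes the Weyl action on line bundles and the identification $s_{w}^{*}p_{2}^{*}(E_{V})=w[\bar\psi^{*}(E_{V})]$ manifest; the cost is exactly the issue you flag, that the equivariant parameters must be matched with the tautological line bundles $L_{\alpha}$ on the base. The paper sidesteps this by working purely in $R(\mathbf{T})$ first and only afterwards taking associated bundles, which is why its proof is so short. Either way the content is the same Weyl-type integration formula, and your discussion of regrouping the $W$-sum over $W_{L}$-cosets is more detail than the paper gives.
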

\begin{proof}
We will state the proof from [XX]. First, notice that $\psi ^ { * } h _ { * } \left( E _ { V } \right)$ is an associated $\mathbf{T}$-module $H^0(\mathbf{G}/\mathbf{P}, \mathbf{G} \times_{\mathbf{P}} V)$. Next, consider left $\mathbf{T}$-action on $\mathbf{G}/\mathbf{T}$, by Prop 3.2, we know the fixed points are characterized by $W/W_L=W_{\mathbf{G}}/W_{\mathbf{P}}$, then together Prop 3.3 and 3.5, the localization formula says
\begin{align*}
\mathrm{Tr_t}( H^0(\mathbf{G} \times_{\mathbf{P}} V) )
&= \sum _ { w \in W / W _ { L } } \frac { i _ { w } ^ { * } \left( \mathbf{G} \times _ \mathbf{ P } V \right) } { \lambda _ { - 1 } \left( T _ { p _ { w } } ^ { \vee } \right) } \\
&= \sum _ { w \in W / W _ { L } } w \left[ \frac { V } { \prod _ { \alpha \in R ^ { + } \left \backslash R _ { L } \right. } \left( 1 -  L^{\vee}_\alpha \right) } \right]
\end{align*} 
where $i_w: p_w \rightarrow \mathbf{G}/\mathbf{P}$ is the embedding, and $\mathrm{Tr_t}$ denotes the character of $\mathbf{T}$. Finally, by considering vector bundles on $V^{s}(\mathbf{G})/\mathbf{T}$ associated with the $\mathbf{T}$-modules on both sides of above equation, we obtain the push-forward and pull back formula. 
\end{proof}

\subsection{Abelian and non-abelian correspondence for I-function with level structures}
Recall small $\mathrm{I} $-function, $I^{R,l}(q,Q)$,
\begin{align*}
 {I}^{R,l}(q,Q) = 1 + \sum _ { \beta \neq 0 } Q ^ { \beta } (ev_{\bullet})_{*} \left(  \mathcal { O } _ { \operatorname{F} _ { 0 , \beta } } ^ { \mathrm { vir } } \otimes  \left( \frac { \operatorname { t r } _ { \mathbb { C } ^ { * } } \mathcal { D } ^ { R , l } } { \lambda_{-1}^{\mathbb{C}^*}  N _ { \operatorname{F} _ { 0 , \beta } } ^ { \vee } } \right) \right) 
\end{align*}
and
\begin{align*}
\operatorname{F}_{0, \beta} := \operatorname{Q} _ { 0,0 + \bullet } ( V / / \mathbf { G } , \beta ) _ { 0 }
\end{align*}
the moduli space parametrizing the quasimaps of class $\beta$
\begin{align*}
\left( \mathbb { P } ^ { 1 } , \mathcal{P} , u \right)
\end{align*}
where $ u : \mathbb{P}^1 \rightarrow \mathcal{P} \times_{\mathbf{G}} V $ a section such that $u(x) \in V^s$ for $x \neq 0 \in \mathbb{P}^1$ and $ 0 \in \mathbb{P}^1$ is a base point of length $\beta(L_\theta)$.

The $\mu$-relative obstruction theory on fixed loci $ \mathrm{F}_{0,\beta} $ by definition is
$$ \left( R ^ { \bullet } \pi _ { * } \mathcal { F } \right) ^ { \vee } |_{\mathrm{F}_{A}} $$
of the following exact sequence
\begin{align}
0 \longrightarrow \mathfrak { P } \times _ { \mathbf { G } } \mathfrak { g } \longrightarrow \mathfrak { P } \times _ { \mathbf { G } } V \longrightarrow \mathcal { F } \longrightarrow 0
\end{align}
so,
\begin{align*}
\psi^* {I}_{\beta}^{V//\mathbf{G},R,l}(q,Q) 
&= \sum_{\tilde{\beta}} \psi^*(ev_{\bullet})_{*} \left(  \mathcal { O } _ { \operatorname{F} _ { {\beta} } } ^ { \mathrm { vir } } \otimes  \left( \frac { \operatorname { t r } _ { \mathbb { C } ^ { * } } \mathcal { D } ^ { R , l } } { \lambda_{-1}^{\mathbb{C}^*}  N _ { \operatorname{F} _ { {\beta} } } ^ { \vee } } \right) \right)  \\
&= \sum_{W\tilde{\beta}} \psi^*h_{*}i_{*} \left(  \mathcal { O } _ { \operatorname{F} _ { W\tilde{\beta} } } ^ { \mathrm { vir } } \otimes  \left( \frac { \operatorname { t r } _ { \mathbb { C } ^ { * } } \mathcal { D } ^ { R , l } } { \lambda_{-1}^{\mathbb{C}^*}  N _ { \operatorname{F} _ { W\tilde{\beta} } } ^ { \vee } } \right) \right)
\end{align*} 
then by the following lemma and lemma 3.10, we get,
\begin{align}
\nonumber & \psi^*{I}_{\beta}^{V//\mathbf{G},R,l}(q,Q) = \sum_{W\tilde{\beta} \rightarrow \beta} \sum_{w \in W/W_L} w  [  \frac { \lambda^{\mathbf{S}} _ { -1 } ( V ^ { s } \times _ \mathbf{ T } V )^{\vee} } { \lambda^{\mathbf{S} } _ { -1 } ( V ^ { s } \times _ \mathbf{ T } V _ { \tilde { \beta } } )^{\vee} }  \cdot   \\
& \frac{ \lambda^{\mathbb{C}^*} _ { -1 }(\bar{\psi}^* \mathrm{R}^1\pi_*(\mathfrak{P} \times_{\mathbf{P}} V )^{mov})^{\vee} \cdot  \lambda^{\mathbb{C}^*} _ { -1 }(\bar{\psi}^* \mathrm{R}^0\pi_*( \mathfrak{P} \times_{\mathbf{P}} \mathfrak{g})^{mov})^{\vee}} {\lambda^{\mathbb{C}^*} _ { -1 }(\bar{\psi}^* \mathrm{R}^1\pi_*(\mathfrak{P} \times_{\mathbf{P}} \mathfrak{g})^{mov})^{\vee} \cdot \lambda^{\mathbb{C}^*} _ { -1 }(\bar{\psi}^* \mathrm{R}^0\pi_*(\mathfrak{P} \times_{\mathbf{P}} V )^{mov})^{\vee}} 
\frac{\mathrm{det}^{-l} (\bar{\psi}^* \mathrm{R}^{\bullet}\pi_*(\mathfrak{P} \times_{\mathbf{P}} R))}{\prod _ { \alpha \in R ^ { + } \backslash R _ { L } } \left( 1 -  L^{\vee}_\alpha \right)} ] 
\end{align}
\begin{remark}
Here we should view the $\tilde{\beta}$ in the right hand side as a representative in W-orbit $W\tilde{\beta}$.
\end{remark}

\begin{lemma}
Let $i : F_{W{\tilde{\beta}}}  \rightarrow V ^ { s } / \mathbf{P}$ be the inclusion of smooth varieties given by evaluation at $(1,0)$. Let $\gamma$ be in $K^{\mathbf{S}}_0(V^s/\mathbf{P})$. Then we get
\begin{align*}
i ^ { * } i _ { * } \gamma = \frac { \lambda^{\mathbf{S}} _ { -1 } \left( V ^ { s } \times _ { P } V \right) ^{\vee}} { \lambda^{\mathbf{S}} _ { -1 } ( V ^ { s } \times _ { P } V _ { \tilde { \beta } } ) ^{\vee}} \gamma
\end{align*}
\end{lemma}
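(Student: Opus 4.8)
The plan is to deduce the formula from the ordinary self-intersection formula in $\mathbf S$-equivariant $K$-theory, once the normal bundle of $i$ has been identified; the only real work is that identification, which will come from the structural description of the fixed loci in Proposition 3.1.

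First I would recall from Proposition 3.1 that $F_{W\tilde\beta}$ is, $\mathbf S$-equivariantly, the geometric quotient $(V_{\tilde\beta}\cap V^s(\mathbf G))/\mathbf P$, where $\mathbf P=P_{\tilde\beta}\subset\mathbf G$ is parabolic and $V_{\tilde\beta}\subset V$ is a $\mathbf P$-stable linear subspace, and that under this identification $i$ is precisely the closed immersion $(V_{\tilde\beta}\cap V^s(\mathbf G))/\mathbf P\hookrightarrow V^s(\mathbf G)/\mathbf P$ induced by the linear inclusion $V_{\tilde\beta}\hookrightarrow V$. As in the hypothesis, $i$ is a closed immersion of smooth varieties, hence a regular embedding, so the self-intersection formula $i^*i_*\gamma=\gamma\cdot\lambda_{-1}(N_i^\vee)$ holds — it follows from the Koszul resolution of $i_*\mathcal O_{F_{W\tilde\beta}}$ and is compatible with the $\mathbf S$-linearization. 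Thus everything reduces to computing the class of the normal bundle $N_i$ in $K_0^{\mathbf S}(F_{W\tilde\beta})$.

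Next I would compute $N_i$. Since $\mathbf G$, hence $\mathbf P$, acts freely on $V^s(\mathbf G)$ (assumption (3) of Section 2.1), the map $q\colon V_{\tilde\beta}\cap V^s(\mathbf G)\to F_{W\tilde\beta}$ is the restriction to $F_{W\tilde\beta}$ of the principal $\mathbf P$-bundle $V^s(\mathbf G)\to V^s(\mathbf G)/\mathbf P$, so $q^*N_i$ is the normal bundle of the smooth closed subscheme $V_{\tilde\beta}\cap V^s(\mathbf G)$ inside the open subscheme $V^s(\mathbf G)\subset V$. The latter is the trivial bundle with fibre $V/V_{\tilde\beta}$, carrying its natural $\mathbf P\times\mathbf S$-action; hence $N_i\cong (V_{\tilde\beta}\cap V^s(\mathbf G))\times_{\mathbf P}(V/V_{\tilde\beta})$. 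Writing $\mathcal V:=V^s(\mathbf G)\times_{\mathbf P}V$ and $\mathcal V_{\tilde\beta}:=V^s(\mathbf G)\times_{\mathbf P}V_{\tilde\beta}$ for the associated bundles on $V^s(\mathbf G)/\mathbf P$, the short exact sequence of $\mathbf P\times\mathbf S$-representations $0\to V_{\tilde\beta}\to V\to V/V_{\tilde\beta}\to 0$ gives $[N_i]=i^*[\mathcal V]-i^*[\mathcal V_{\tilde\beta}]$ in $K_0^{\mathbf S}(F_{W\tilde\beta})$.

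The last step is formal: $\lambda_{-1}$ turns the additive identity $[N_i^\vee]=i^*[\mathcal V^\vee]-i^*[\mathcal V_{\tilde\beta}^\vee]$ into the multiplicative one $\lambda^{\mathbf S}_{-1}(N_i^\vee)=\lambda^{\mathbf S}_{-1}(i^*\mathcal V^\vee)/\lambda^{\mathbf S}_{-1}(i^*\mathcal V_{\tilde\beta}^\vee)$ in the relevant localization of $K_0^{\mathbf S}$, which is exactly the right-hand side of the claimed formula (with the pullback $i^*$ left implicit, as in the statement). Substituting into the self-intersection formula finishes the proof. I expect the main obstacle to be the first step: one must carefully trace through Proposition 3.1 that the evaluation map really is the immersion induced by the $\mathbf P$-subrepresentation $V_{\tilde\beta}\subset V$ — matching the $\mathbf S$-linearizations on both sides — so that $N_i$ is the ``naive'' normal bundle $V/V_{\tilde\beta}$ with no correction terms; after that, the computation of $N_i$ and the multiplicativity of $\lambda_{-1}$ are routine.
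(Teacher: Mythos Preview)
Your proposal is correct and follows essentially the same approach as the paper: both reduce to the self-intersection formula $i^*i_*\gamma=\lambda_{-1}(N_i^\vee)\cdot\gamma$ and then identify $[N_i]=[V^s\times_{\mathbf P}V]-[V^s\times_{\mathbf P}V_{\tilde\beta}]$. The only cosmetic difference is that the paper gestures at the generalized Euler sequence (applied to both $V^s/\mathbf P$ and $(V_{\tilde\beta}\cap V^s)/\mathbf P$, with the $\mathfrak p$-terms cancelling) to get this identity, whereas you compute $N_i$ directly by pulling back along the principal $\mathbf P$-bundle and observing the normal bundle upstairs is the trivial bundle $V/V_{\tilde\beta}$; your route is the more explicit of the two.
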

\begin{proof}
First recall the generalized "Euler sequence"
\begin{align*}
0 \longrightarrow V ^ { s } \times _ { \mathbf { G } } \mathfrak { g } \longrightarrow V ^ { s } \times _ { \mathbf { G } } V \rightarrow T _ { V // \mathbf { G } } \longrightarrow 0
\end{align*}
Then it follows from the projection formula and above Euler exact sequence.
\end{proof}
Since $\bar{\psi}$ is flat, then $\bar{\psi}^*$ commutes with $\mathrm{R}^{\bullet}\pi_*$, and by Prop 3.5, we get,
\begin{align*}
\bar{\psi}^*(\mathfrak{P}  \times_{\mathbf{P}} V) = \mathfrak{P}  \times_{\mathbf{T}} V,\ \ 
\bar{\psi}^*(\mathfrak{P}  \times_{\mathbf{P}} \mathfrak{g}) = \mathfrak{P}  \times_{\mathbf{T}} \mathfrak{g},\ \ 
\bar{\psi}^* (\mathfrak{P}  \times_{\mathbf{P}} R) = \mathfrak{P} \times_{\mathbf{T}} R
\end{align*}
\begin{remark}
By the discussion in \cite{webb2018abelian}, the above bundles split as direct sums of associated line bundles. 
\end{remark}
Let $\alpha$ range over all the weights of $\mathbf{T}$ acting on $\mathfrak{g}$, then
\begin{align*}
\lambda^{\mathbb{C}^*} _ { -1 }( \mathrm{R}^0\pi_*(\mathfrak{P}  \times_{\mathbf{T}} \mathfrak{g})^{mov})^{\vee} 
&=\prod _{\tilde{\beta}(\alpha) \geq 0} \frac{\prod _ {k=-\infty}^{\tilde{\beta}(\alpha)}(1-L^{\vee}_{\alpha}q^k)}{\prod _{k=-\infty}^{0}(1-L^{\vee}_{\alpha}q^k)} \\
\lambda^{\mathbb{C}^*} _ { -1 }( \mathrm{R}^1\pi_*(\mathfrak{P}  \times_{\mathbf{T}} \mathfrak{g})^{mov})^{\vee} 
&=\prod _{\tilde{\beta}(\alpha) < 0} \frac{\prod _{k=-\infty}^{-1}(1-L^{\vee}_{\alpha}q^k)}{\prod _ {k=-\infty}^{\tilde{\beta}(\alpha)}(1-L^{\vee}_{\alpha}q^k)}
\end{align*}
notice that $\tilde{\beta}(\alpha) < 0$, when $\alpha \in R^+ \backslash R_L$, so
\begin{align*}
\frac{\lambda^{\mathbf{T}} _ { -1 }( \mathrm{R}^0\pi_*(\mathfrak{P} \times_{\mathbf{T}} \mathfrak{g})^{mov})^{\vee} }{\lambda^{\mathbf{T}} _ { -1 }( \mathrm{R}^1\pi_*(\mathfrak{P} \times_{\mathbf{T}} \mathfrak{g})^{mov})^{\vee} \prod _ { \alpha \in R ^ { + } \backslash R _ { L } } \left( 1 -  L^{\vee}_\alpha \right)} 
= \prod_{\alpha} \frac{\prod _ {k=-\infty}^{\tilde{\beta}(\alpha)}(1-L^{\vee}_{\alpha}q^k)}{\prod _{k=-\infty}^{0}(1-L^{\vee}_{\alpha}q^k)}
\end{align*}
as we define before, the $\mathbf{T}$ weights of V are $\{\xi_1, \cdots \xi_n \}$, suppose $I \cup J=\{1,\cdots,n \}$ such that, $\tilde{\beta}(\xi _i) \geq 0$ if $i \in I$, and $\tilde{\beta}(\xi _j) <0 $ if $j \in J$, then 
\begin{align*}
\lambda^{\mathbf{T}} _ { -1 }( \mathrm{R}^0\pi_*(\mathfrak{P} \times_{\mathbf{T}} V)^{mov})^{\vee} 
&=\prod _{i \in I} \frac{\prod _ {k=-\infty}^{\tilde{\beta}(\xi _i)}(1-L^{\vee}_{\xi _i}q^k)}{\prod _{k=-\infty}^{0}(1-L^{\vee}_{\xi _i}q^k)} \\
\lambda^{\mathbf{T}} _ { -1 }( \mathrm{R}^1\pi_*(\mathfrak{P} \times_{\mathbf{T}} V)^{mov})^{\vee} 
&=\prod _{j \in J} \frac{\prod _{k=-\infty}^{-1}(1-L^{\vee}_{\xi _j}q^k)}{\prod _ {k=-\infty}^{\tilde{\beta}(\xi _j)}(1-L^{\vee}_{\xi _j}q^k)}
\end{align*}
similarly,
\begin{align*}
  \frac { \lambda^{\mathbf{T}} _ { -1 } ( V ^ { s } \times _ \mathbf{ T } V )^{\vee} } { \lambda^{\mathbf{T}} _ { -1 } ( V ^ { s } \times _ \mathbf{ T } V _ { \tilde { \beta } } )^{\vee} }  \frac{\lambda^{\mathbf{T}} _ { -1 }( \mathrm{R}^1\pi_*(\mathfrak{P} \times_{\mathbf{T}} V )^{mov})^{\vee}}{\lambda^{\mathbf{T}} _ { -1 }( \mathrm{R}^0\pi_*(\mathfrak{P} \times_{\mathbf{T}} V )^{mov})^{\vee}} 
= \prod_{j=1}^{n} \frac{\prod _ {k=-\infty}^{0}(1-L^{\vee}_{\xi_j}q^k)}{\prod _{k=-\infty}^{\tilde{\beta}(\xi_j)}(1-L^{\vee}_{\xi_j}q^k)}
\end{align*}

Comparing formula (3) with the formula in \cite{2018arXiv180406552R}, we get 
\begin{align*}
\prod_{j=1}^{n} \frac{\prod _ {k=-\infty}^{0}(1-L^{\vee}_{\xi_j}q^k)}{\prod _{k=-\infty}^{\tilde{\beta}(\xi_j)}(1-L^{\vee}_{\xi_j}q^k)} \cdot \mathrm{det}^{-l} (\mathrm{R}^{\bullet}\pi_*(\mathcal{P} \times_{\mathbf{T}} R)) = I_{\tilde{\beta}}^{V//\mathbf{T},R,l}
\end{align*}
thus we arrive at the following theorem,
\begin{theorem}
Let $\mathbf{G}$ be a connected reductive complex Lie group with character $\theta$, acting on a vector space $V$ satisfying 
\item[-]{$V^s(\mathbf{G})=V^{ss}(\mathbf{G})$is nonempty and $V^s(\mathbf{T})=V^{ss}(\mathbf{T})$}
\item[-]{$\mathbf{G}$ acts on $V^s(\mathbf{G})$ freely and $\mathbf{T}$ acts on $V^s(\mathbf{T})$ freely}
\item[-]{The GIT quotients $V//_{\theta}\mathbf{G}=V^s(\mathbf{G})/\mathbf{G}$ and $V//_{\theta}\mathbf{T}=V^s(\mathbf{T})/\mathbf{T}$ are projective.}
Then 
\begin{align*}
\psi^*{I}_{\beta}^{V//\mathbf{G},R,l}(q,Q)
=j^* \sum_{W\tilde{\beta} \rightarrow \beta} \sum_{w \in W/W_L} w \left[ \prod_{\alpha} \frac{\prod _ {k=-\infty}^{\tilde{\beta}(\alpha)}(1-L^{\vee}_{\alpha}q^k)}{\prod _{k=-\infty}^{0}(1-L^{\vee}_{\alpha}q^k)}  I_{\tilde{\beta}}^{V//\mathbf{T},R,l} \right]
\end{align*}
where $j$ is an open immersion induced by the inclusion $ V^s(\mathbf{G}) \subset V^s(\mathbf{T})$. 
\end{theorem}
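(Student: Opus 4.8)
The plan is to assemble the identity from three ingredients already developed in the paper: the localization description of the fixed loci $F_{W\tilde\beta}$ from Proposition~3.1, the flag-bundle push-forward formula of Lemma~3.10, and the explicit $\lambda_{-1}$-computations of the moving parts of the obstruction theory carried out in Section~3.3. First I would expand $\psi^*I^{V//\mathbf{G},R,l}_\beta(q,Q)$ using Definition~2.11 and the decomposition of the $\mathbb{C}^*$-fixed locus $F_{0,\beta}$ into connected components indexed by $W$-orbits $W\tilde\beta$ with $\tau(\tilde\beta)=\beta$. Via the commuting diagram of Proposition~3.1, the contribution of each component factors through $h_*i_*$ with $i\colon F_{W\tilde\beta}\hookrightarrow V^s(\mathbf{G})/\mathbf{P}_{\tilde\beta}$, so that $\psi^*(ev_\bullet)_* = \psi^* h_* i_*$ on each piece. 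Applying Lemma~3.11 to compute $i^*i_*$ converts the pushforward through $i$ into a ratio of $\lambda^{\mathbf{S}}_{-1}$-classes of $V^s\times_{\mathbf{T}}V$ and $V^s\times_{\mathbf{T}}V_{\tilde\beta}$, and then Lemma~3.10 converts $\psi^*h_*$ on $V^s(\mathbf{G})/\mathbf{P}$ into the Weyl-sum $\sum_{w\in W/W_L} w[\,\cdot\,/\prod_{\alpha\in R^+\setminus R_L}(1-L_\alpha^\vee)\,]$. This is exactly formula (3) in the excerpt.

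Next I would simplify the big bracket in (3). Since $\bar\psi$ is flat it commutes with $R^\bullet\pi_*$, and by Proposition~3.5 the pulled-back bundles $\mathfrak{P}\times_{\mathbf{P}}V$, $\mathfrak{P}\times_{\mathbf{P}}\mathfrak{g}$, $\mathfrak{P}\times_{\mathbf{P}}R$ become the $\mathbf{T}$-associated bundles, which split as direct sums of line bundles $L_\alpha$ (as noted following the remark in Section~3.3). On $\mathbb{P}^1$ the cohomology $R^\bullet\pi_*\mathcal{O}(m)$ and its $\mathbb{C}^*$-weights are completely explicit, so each $\lambda^{\mathbb{C}^*}_{-1}$ of a moving part is the stated product over $k$ from $-\infty$ (a formal infinite product, to be read in the completed/localized sense already used to define the $I$-function). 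The key algebraic observation is the telescoping identity
\begin{align*}
\frac{\lambda^{\mathbf{T}}_{-1}(R^0\pi_*(\mathfrak{P}\times_{\mathbf{T}}\mathfrak{g})^{mov})^\vee}{\lambda^{\mathbf{T}}_{-1}(R^1\pi_*(\mathfrak{P}\times_{\mathbf{T}}\mathfrak{g})^{mov})^\vee\prod_{\alpha\in R^+\setminus R_L}(1-L_\alpha^\vee)} = \prod_\alpha \frac{\prod_{k=-\infty}^{\tilde\beta(\alpha)}(1-L_\alpha^\vee q^k)}{\prod_{k=-\infty}^0(1-L_\alpha^\vee q^k)},
\end{align*}
using that $\tilde\beta(\alpha)<0$ precisely for $\alpha\in R^+\setminus R_L$ so that the extra factor $\prod_{\alpha\in R^+\setminus R_L}(1-L_\alpha^\vee)$ (which is the $k=0$ term missing from the numerator range) is exactly absorbed. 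Together with the analogous (by definition, not telescoping) identification of the $V$-part plus the $\mathrm{det}^{-l}(R^\bullet\pi_*(\mathcal{P}\times_{\mathbf{T}}R))$ factor with the abelian $I$-function $I^{V//\mathbf{T},R,l}_{\tilde\beta}$ — obtained by matching against the explicit formula for the $I$-function of a toric/abelian quotient in \cite{2018arXiv180406552R} — the bracket collapses to $\prod_\alpha\frac{\prod_{k=-\infty}^{\tilde\beta(\alpha)}(1-L_\alpha^\vee q^k)}{\prod_{k=-\infty}^0(1-L_\alpha^\vee q^k)}\,I^{V//\mathbf{T},R,l}_{\tilde\beta}$, which is the right-hand side of the theorem.

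Finally I would account for the $j^*$. The hypotheses $V^s(\mathbf{G})=V^{ss}(\mathbf{G})$, $V^s(\mathbf{T})=V^{ss}(\mathbf{T})$, freeness of both actions, and projectivity of the two GIT quotients guarantee that all the moduli stacks and fixed loci in play are honest smooth projective varieties, that localization and the correspondence-of-residues Lemma~2.3 apply, and — crucially — that the inclusion $V^s(\mathbf{G})\subset V^s(\mathbf{T})$ induces a genuine open immersion $j\colon V^s(\mathbf{G})/\mathbf{T}\hookrightarrow V^s(\mathbf{T})/\mathbf{T} = V//_\theta\mathbf{T}$, so that line bundles $L_\alpha$ and the abelian $I$-function restrict compatibly. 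The Weyl sum and the products over roots are a priori defined on $V^s(\mathbf{T})/\mathbf{T}$ (that is where $I^{V//\mathbf{T},R,l}_{\tilde\beta}$ lives), and $j^*$ pulls them back to $V^s(\mathbf{G})/\mathbf{T}$, where the left-hand side $\psi^* I^{V//\mathbf{G},R,l}_\beta$ naturally sits; one checks that the expression produced in the previous paragraph is precisely $j^*$ of the stated sum.

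\textbf{Main obstacle.} The routine parts are the $\mathbb{C}^*$-weight bookkeeping on $\mathbb{P}^1$ and the telescoping. The genuinely delicate step is the identification of the $V$-part-plus-level-bundle factor with the abelian $I$-function $I^{V//\mathbf{T},R,l}_{\tilde\beta}$: one must check that the determinant line bundle $\mathrm{det}^{-l}R^\bullet\pi_*(\mathfrak{P}\times_{\mathbf{T}}R)$ appearing in the non-abelian fixed-locus computation matches, after $\bar\psi^*$ and the Weyl-sum, the level-$l$ determinant bundle entering the definition of the abelian $I$-function — i.e. that the level structure is compatible with abelianization — and that the ranges of the infinite products (hence the normalization by the $k\le 0$ denominators) line up exactly after the cancellation against $\prod_{\alpha\in R^+\setminus R_L}(1-L_\alpha^\vee)$. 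Getting the $\mathbf{S}$-equivariant (and then non-equivariant limit) bookkeeping consistent between the two sides, as in \cite{webb2018abelian}, is where the real care is needed.
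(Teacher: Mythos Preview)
Your proposal is correct and follows essentially the same route as the paper: decompose the fixed locus into the components $F_{W\tilde\beta}$ via Proposition~3.1, factor $(ev_\bullet)_*$ as $h_*i_*$, apply the $i^*i_*$ lemma and the flag-bundle push-forward lemma to reach formula~(3), then use flatness of $\bar\psi$ and the splitting into $\mathbf{T}$-line bundles to perform the explicit $\lambda_{-1}$ bookkeeping, absorb the denominator $\prod_{\alpha\in R^+\setminus R_L}(1-L_\alpha^\vee)$ into the $\mathfrak{g}$-product, and finally match the remaining $V$-part together with $\det^{-l}R^\bullet\pi_*(\mathfrak{P}\times_{\mathbf{T}}R)$ against the abelian $I$-function from \cite{2018arXiv180406552R}. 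Your diagnosis of the delicate step---checking that the level-$l$ determinant bundle abelianizes correctly---is exactly the point the paper handles by direct comparison with the Ruan--Zhang formula; the only minor overstatement is that the correspondence-of-residues Lemma~2.3 is not actually invoked here (it is used only for the flag-variety mirror theorem in Section~2.6).
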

\begin{remark}
Here we can't combine $w$ with $W\tilde{\beta}$ as in \cite{webb2018abelian}, since in K theory $(1-L^{\vee}_{-\alpha}) \neq -(1-L^{\vee}_{\alpha})$, however in cohomology theory, we have $c_1(L_{\alpha})=-c_1(L_{-\alpha})$.
\end{remark}

\section{Difference equations of $I$-function of complete flag variety with level structures}

\subsection{$I$-function of partial flag varieties}

Partial flag variety is a manifold that parameterizes inclusion sequences  $V_{\bullet} : V_1 \hookrightarrow \cdots \hookrightarrow V_I $ of planes $ V_i $ in $ \mathbb{C}^n $ of dimension $ r_i $, denoted by $F l _ { r _ { 1 } , \ldots , r _ { I } } \left( \mathbb { C } ^ { n } \right)$. It is a projective variety, indeed, there is a natural map $ \psi : F l _ { r _ { 1 } , \ldots , r _ { I } } \left( \mathbb { C } ^ { n } \right) \rightarrow \operatorname { Gr } \left( r _ { 1 } , \mathbb{C}^n \right) \times \cdots \times G r \left( r _ { I } , \mathbb{C}^n \right) $ and the image is closed.

Here we describe flag variety as a GIT quotient, we claim  
\begin{align*}
F l _ { r _ { 1 } , \ldots , r _ { I } } \left( \mathbb { C } ^ { n } \right) =  \oplus^I_{i=1} \mathrm{Hom}(\mathbb{C}^{r_i},\mathbb{C}^{r_{i+1}}) //_{\mathbf{det}} GL_{r_1} \times \cdots \times GL_{r_I}
\end{align*}
here 
\begin{align*}
& \mathbf{det} \in \chi(Gl_{r_1} \times \cdots \times Gl_{r_I}) , \ \ (g_{r_1}, \cdots, g_{r_I}) \in Gl_{r_1} \times \cdots \times Gl_{r_I} , \\  & \mathbf{det}((g_{r_1}, \cdots, \ \ g_{r_I})) = \mathbf{det}(g_{r_1}) \times \cdots \times \mathbf{det}(g_{r_I})   
\end{align*}
and we set $\mathbb{C}^{r_I+1} = \mathbb{C}^n$.

Let's figure out what lies in the stable loci, if we view $ \mathrm{Hom}(\mathbb{C}^{r_i},\mathbb{C}^{r_{i+1}}) $ as matrixes $ \mathrm{M}_{r_i \times r_{i+1}} $, we claim that if $ A := (A_i)_i \in \oplus_{i=1}^{I}\mathrm{M}_{r_i \times r_{i+1}} $ with all $ \mathrm{rank}(A_i) = r_i $ then $ (A_i)_i$ belongs to stable loci. Indeed, let the one parameter subgroups $ \lambda  := (\lambda_i)_i : \mathbb{C}^* \rightarrow \prod_{i=1}^{I} GL_{r_i} $ be 
\begin{align*}
 \lambda _i(t) = P_i\operatorname{diag}(t^{d_{i,1}}, \cdots, t^{d_{i,r_i}})P^{-1}_i      
\end{align*}
such that $ \mathrm{lim}_{t \rightarrow 0} \lambda \cdot A $ exists, notice that $ P_I\operatorname{diag}(t^{d_{I,1}}, \cdots, t^{d_{I,r_I}})P^{-1}_I \cdot A_I $ exists, this requires all $d_{I,j} \geq 0 $, deductively, we can get all $d_{i,j} \geq 0 $, then $\mathbf{det}(\lambda)=t^{\sum_{i,j}d_{i,j}}$ with $\sum_{i,j}d_{i,j} \geq 0$.

Conversely, if $ (A_i)_i \in \oplus_{i=1}^{I}\mathrm{M}_{r_i \times r_{i+1}} $ with some $ \mathrm{rank}(A_i) < r_i $ then there exist $ P \in Sl_{r_i} $ and $ Q \in Sl_{r_{i+1}} $ such that $ P^{-1} A_i Q $ has the first row zero, let part of the one parameter subgroups $ (\lambda _i, \lambda_{i+1}) : \mathbb{C}^* \rightarrow GL_{r_i} \times GL_{r_{i+1}} $ be 
\begin{align*}
 \lambda _i(t) = P\operatorname{diag}(t^{d_{i1}}, \cdots, t^{d_{i,r_i}})P^{-1} \ \     
and \ \
 \lambda _{i+1}(t) = Q\operatorname{diag}(t^{d_{i+1,1}}, \cdots, t^{d_{i+1,r_i}})Q^{-1}      
\end{align*}
then
\begin{align*} 
(\lambda _i, \lambda_{i+1}) \cdot A_i = P\operatorname{diag}(t^{d_{i,1}}, \cdots, t^{d_{i,r_i}})P^{-1} A Q^{-1}\operatorname{diag}(t^{d_{i+1,1}}, \cdots, t^{d_{i+1,r_i}})Q  
\end{align*}
so $(\lambda _i, \lambda_{i+1}) \cdot A_i$ exists, even $d_{i,1} \ll 0 $. Thus, $(A_i)_i$ with some $ \mathrm{rank}(A_i) < r_i $ is unstable. 

Conclusion, $(\oplus^I_{i=1} \mathrm{Hom}(\mathbb{C}^{r_i},\mathbb{C}^{r_{i+1}}) )^s$ is $I$-tuple $ (f_{r_1}, \cdots, f_{r_I}) $ of injective maps $ f_{r_k} \in \mathrm{Hom}(\mathbb{C}^{r_k},\mathbb{C}^{r_{k+1}}) $, these maps give a sequence of inclusion in $ \mathbb{C}^n $, then quotient by the group $ \mathbf{G} =GL_{r_1} \times \cdots \times GL_{r_I} $ which is the transformation matrixes of changing basis, so we have a inclusion sequences $V_{\bullet} : V_1 \hookrightarrow \cdots \hookrightarrow V_I $ of planes $ V_i $ in $ \mathbb{C}^n $ of dimension $ r_i $.

Recall the facts that every principal $ Gl_k $-bundle is the frame bundle of some vector bundle (up to isomorphism)[\cite{mitchell2001notes}, Proposition 4.1], and Grothendieck classification of vector bundles on $ \mathbb{P}^1 $, so $ \mathcal{P} \times_{GL_k} \mathbb{C}^k \cong \oplus^{k}_{i=1} \mathcal{O}_{\mathbb{P}^1}(-a_{k,i}) $, denoted by ${\mathcal{E}}_k $, then we have
\begin{align*}
\mathcal{P} \times _{\mathbf{G}} V \cong \oplus ^I _{i=1} \mathcal{H}om( {\mathcal{E}}_{r_i}, {\mathcal{E}}_{r_{i+1}} )
\end{align*}
and a section $ u : \mathbb{P}^1 \rightarrow \oplus ^I _{i=1} \mathcal{H}om( {\mathcal{E}}_{r_i}, {\mathcal{E}}_{r_{i+1}} ) $. The condition requiring no base point for $ s \neq 0 \in \mathbb{P}^1 $ means $ {\mathcal{E}}_{r_i} $ is a $ \mathbb{C}^*$ invariant coherent sheaf. By definition, there is a filtration of $ {\mathcal{E}}_{r_i} $, denoted by $ F_{\bullet}{\mathcal{E}}_{r_i} $: 
\begin{align*}
{\mathcal{E}}_{r_i,1} \subset {\mathcal{E}}_{r_i,2} \subset \cdots \subset {\mathcal{E}}_{r_i,r_i}
\end{align*}
such that ${\mathcal{E}}_{r_i,j}/{\mathcal{E}}_{r_i,j-1} \cong \mathcal{O}_{\mathbb{P}^1}(-a_{r_i,j})$.

Recall the $ \mathbb{C}^* $ invariant coherent sheaf on $ \mathbb{P}^1 $ is characterized as follow
\begin{proposition}\cite{liu2004s}
Let $\mathcal{V}$ be a rank $r$ coherent subsheaf of $\mathcal{E}^n$ on $\mathbb{P}^1$. Then $\mathcal{V}$ is a locally free $ \mathcal{O}_{\mathbb{P}^1} $-module.

Let $ \left\{ U _ { 0 } = \mathbb{P}^1 - \{ \infty \} = \operatorname { Spec } \mathbb { C } [ z ] , U _ { \infty } = \mathbb{P}^1 - \{ 0 \} = \operatorname { Spec } \mathbb { C } [ w ] \right\} $ be an atlas of affine charts on $ \mathbb{P}^1 $ . Then there exists a constant re-trivialization
\begin{align*}
\mathcal{E}^n |_{U_0} = \left( \mathbb { C } [ z ] ^ { \oplus r } \right) ^ { \sim }
\end{align*}
such that 
\begin{align*}
\left. \mathcal { V } \right| _ { U _ { 0 } } = \left( \mathbb { C } [ z ] z ^ { \alpha _ { 1 } } \oplus \cdots \oplus \mathbb { C } [ z ] z ^ { \alpha _ { r } } \right) ^ { \sim }
\end{align*}
with $ 0 \leq \alpha _ { 1 } \leq \cdots \leq \alpha _ { r } $, Similarly for $ \left. \mathcal { V } \right| _ { U _ { \infty } } $ corresponding to $0 \leq \beta _ { 1 } \leq \ldots , \leq \beta _ { r }$.
\end{proposition}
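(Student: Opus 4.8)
The plan is to prove the two assertions in turn: local freeness of $\mathcal{V}$, and then the existence at each $\mathbb{C}^*$-fixed point of a \emph{constant} re-trivialization putting $\mathcal{V}$ in elementary-divisor form. Throughout I use that $\mathcal{E}^n = \mathcal{O}_{\mathbb{P}^1}^{\oplus n}$ is the trivial bundle and that $\mathcal{V}$ is $\mathbb{C}^*$-invariant, with $\mathbb{C}^*$ acting only on the base $\mathbb{P}^1$ so that the induced action on the fiber $\mathbb{C}^n$ is trivial. These are the hypotheses supplied by the surrounding quasimap setup, and they are exactly what make the statement true (the preamble ``the $\mathbb{C}^*$ invariant coherent sheaf\ldots'' signals that invariance is in force).

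\textbf{Local freeness.} Since $\mathcal{E}^n$ is locally free it is torsion-free, hence so is its coherent subsheaf $\mathcal{V}$. As $\mathbb{P}^1$ is a smooth curve, every local ring $\mathcal{O}_{\mathbb{P}^1,p}$ is a discrete valuation ring, and a finitely generated torsion-free module over a DVR is free; therefore each stalk $\mathcal{V}_p$ is free and $\mathcal{V}$ is locally free of rank $r$. This part is routine and I would dispatch it in a sentence.

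\textbf{Standard form at $0$.} Restricting to $U_0 = \operatorname{Spec}\mathbb{C}[z]$, the module $M := \Gamma(U_0,\mathcal{E}^n)\cong\mathbb{C}[z]^{\oplus n}$ is graded by $\mathbb{C}^*$-weight, and because the fiber action is trivial its degree-$d$ piece is $M_d = z^d\cdot\mathbb{C}^n$, so multiplication by $z$ gives isomorphisms $M_d\xrightarrow{\sim} M_{d+1}$. The $\mathbb{C}^*$-invariance of $\mathcal{V}$ says that $N := \Gamma(U_0,\mathcal{V})$ is a graded submodule, $N = \bigoplus_d N_d$ with $N_d = N\cap M_d$; transporting $N_d\subset M_d$ to $\mathbb{C}^n$ via these identifications, the containment $z\cdot N_d\subset N_{d+1}$ becomes an increasing chain of subspaces $N_0\subseteq N_1\subseteq\cdots\subseteq\mathbb{C}^n$ that stabilizes at the rank-$r$ generic fiber $W$ of $\mathcal{V}$. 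I then pick a constant basis $f_1,\dots,f_n$ of $\mathbb{C}^n$ adapted to this flag, ordered so that $f_i$ first enters at step $\alpha_i$, which forces $0\le\alpha_1\le\cdots\le\alpha_r$. Unwinding the identifications yields $N = \bigoplus_{i=1}^r z^{\alpha_i}\mathbb{C}[z]\,f_i$, i.e. exactly the claimed diagonal form in the constant frame $f_\bullet$. The identical argument over $U_\infty = \operatorname{Spec}\mathbb{C}[w]$ produces the exponents $0\le\beta_1\le\cdots\le\beta_r$.

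\textbf{Main obstacle.} The essential point — and the reason the re-trivialization can be taken \emph{constant} rather than merely lying in $GL_n(\mathbb{C}[z])$ — is the interplay of $\mathbb{C}^*$-invariance with the triviality of the fiber action. Smith normal form over the PID $\mathbb{C}[z]$ by itself would only produce elementary divisors $p_i(z)$ that are arbitrary polynomials, together with a $z$-dependent change of basis; $\mathbb{C}^*$-invariance forces the torsion of $\mathcal{E}^n/\mathcal{V}$ to be supported at the fixed points $\{0,\infty\}$, so that $p_i = z^{\alpha_i}$, while triviality of the fiber action collapses the graded module to the flag-of-subspaces picture that furnishes the constant frame. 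I would verify that a nontrivial fiber weight genuinely breaks constancy — a homogeneous generator such as $z e_1 + e_2$ is not of the form $z^{\alpha}\cdot(\text{constant vector})$ — confirming that the hypothesis $\mathcal{E}^n = \mathcal{O}^{\oplus n}$ with a base-only $\mathbb{C}^*$-action is indispensable and is precisely the setting in which the proposition is applied.
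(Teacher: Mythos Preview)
The paper does not supply its own proof of this proposition: it is quoted verbatim from \cite{liu2004s} and used as a black box, so there is nothing in the paper to compare your argument against. That said, your proof is correct and is essentially the standard argument one would expect for this result. The local-freeness step is routine, and your graded-module analysis---identifying each weight piece $N_d$ with a subspace of the constant fiber $\mathbb{C}^n$ via division by $z^d$, obtaining the increasing flag $N_0\subseteq N_1\subseteq\cdots\subseteq W$, and choosing a constant basis adapted to this flag---is exactly how one exploits the base-only $\mathbb{C}^*$-action to upgrade Smith normal form to a \emph{constant} change of frame with monomial elementary divisors. Your diagnosis of the main obstacle is also on point: invariance forces support of $\mathcal{E}^n/\mathcal{V}$ at the fixed points, and triviality of the fiber action is what collapses the weight filtration to an honest flag in $\mathbb{C}^n$ rather than a more complicated graded picture. (One minor cosmetic point: the paper's displayed ``$\mathbb{C}[z]^{\oplus r}$'' is almost certainly a typo for $\mathbb{C}[z]^{\oplus n}$; your interpretation---extend $f_1,\dots,f_r$ to a full basis $f_1,\dots,f_n$ of $\mathbb{C}^n$---is the correct reading.)
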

\begin{definition}{[admissible pair of $ (\alpha_{\bullet},\beta_{\bullet} ) $]}
Let $ (\alpha_{1, \bullet},\beta_{1, \bullet} ) $ and $ (\alpha_{2, \bullet},\beta_{2, \bullet} ) $  be two datum associated with sheaves $ (\alpha_{1,1}, \cdots, \alpha_{1,r_1} ; \beta_{1,1}, \cdots, \beta_{1,r_1} ) $ and $ (\alpha_{2,1}, \cdots, \alpha_{2,r_2} ; \beta_{2,1}, \cdots, \beta_{2,r_2} ) $, we say  $ (\alpha_{1, \bullet},\beta_{1, \bullet} ) $ is admissible to $ (\alpha_{2, \bullet},\beta_{2, \bullet} ) $ if
\begin{align*}
r _ { 1 } \leq r _ { 2 } , \quad \alpha _ { 1 , i } \geq \alpha _ { 2 , i } \quad \text { and } \quad \beta _ { 1 , i } \geq \beta _ { 2 , i } \text { for } i = 1 , \ldots , r _ { 1 }
\end{align*}
denoted by $ (\alpha_{1, \bullet},\beta_{1, \bullet} ) \rightarrow  (\alpha_{2, \bullet},\beta_{2, \bullet} ) $.
\end{definition}
\begin{proposition}[$\mathbb{C}^*$-invariant pair]\cite{liu2004s}
Let $\mathcal{V}_1$ and $\mathcal{V}_2$ be $\mathbb{C}^*$-invariant subsheaves of $\mathcal{E}^n$ characterized by the data $ (\alpha_{1, \bullet},\beta_{1, \bullet} ) $ and $ (\alpha_{2, \bullet},\beta_{2, \bullet} ) $ respectively. Then $\mathcal{V}_1$ is a subsheaf of $\mathcal{V}_2$ if and only if $ (\alpha_{1, \bullet},\beta_{1, \bullet} ) $ is admissible to $ (\alpha_{2, \bullet},\beta_{2, \bullet} ) $.
\end{proposition}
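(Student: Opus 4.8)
The plan is to localize everything to the two affine charts $U_0=\operatorname{Spec}\mathbb{C}[z]$ and $U_\infty=\operatorname{Spec}\mathbb{C}[w]$, and to run the argument with the monomial normal forms supplied by the previous Proposition. First I would record the local picture: for $a=1,2$, after a constant re-trivialization of $\mathcal{E}^n$ over $U_0$ we have $\mathcal{V}_a|_{U_0}=\bigl(\bigoplus_{i=1}^{r_a}\mathbb{C}[z]\,z^{\alpha_{a,i}}\bigr)^{\sim}$, and likewise $\mathcal{V}_a|_{U_\infty}=\bigl(\bigoplus_{i=1}^{r_a}\mathbb{C}[w]\,w^{\beta_{a,i}}\bigr)^{\sim}$. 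The key preliminary step is to upgrade these to \emph{compatible} trivializations using the $\mathbb{C}^*$-action: the generic fibre of a $\mathbb{C}^*$-invariant subsheaf is a $\mathbb{C}^*$-fixed, hence constant, subspace $W_a\subseteq\mathbb{C}^n$, and on each chart $\mathcal{V}_a$ is forced to be the monomial submodule of $W_a\otimes\mathcal{O}$ attached to an increasing flag $W_{a,0}\subseteq W_{a,1}\subseteq\cdots$ in $W_a$, with $\alpha_{a,i}=\min\{k:\dim W_{a,k}\ge i\}$ (and analogously for the $\beta_{a,\bullet}$ at $\infty$). Consequently one basis $e_1,\ldots,e_n$ of $\mathbb{C}^n$ adapted to the relevant flags simultaneously normalizes $\mathcal{V}_1$ and $\mathcal{V}_2$ on both charts, and the question ``$\mathcal{V}_1\subseteq\mathcal{V}_2$'' is reduced to the purely combinatorial question of when $z^{\alpha_{1,i}}e_i$ and $w^{\beta_{1,i}}e_i$ lie in the corresponding monomial modules of $\mathcal{V}_2$.

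Next I would prove necessity. If $\mathcal{V}_1\subseteq\mathcal{V}_2$, taking stalks at the generic point gives $W_1\subseteq W_2$, hence $r_1\le r_2$. Restricting the inclusion to $U_0$, the section $z^{\alpha_{1,i}}e_i$ must lie in $\mathcal{V}_2|_{U_0}=\bigoplus_j z^{\alpha_{2,j}}\mathbb{C}[z]e_j$, which with the nesting $W_{1,k}\subseteq W_{2,k}$ from the first step forces $\dim W_{1,k}\le\dim W_{2,k}$ for every $k$; after arranging both exponent tuples to be non-decreasing this is precisely the inequality $\alpha_{1,i}\ge\alpha_{2,i}$ for $i=1,\ldots,r_1$. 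The identical argument over $U_\infty$ yields $\beta_{1,i}\ge\beta_{2,i}$, so $(\alpha_{1,\bullet},\beta_{1,\bullet})$ is admissible to $(\alpha_{2,\bullet},\beta_{2,\bullet})$.

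For sufficiency I would reverse the construction. Given admissibility, on $U_0$ the inequality $\alpha_{1,i}\ge\alpha_{2,i}$ gives $z^{\alpha_{1,i}}e_i=z^{\alpha_{1,i}-\alpha_{2,i}}\cdot z^{\alpha_{2,i}}e_i\in\mathcal{V}_2|_{U_0}$, producing an inclusion $\mathcal{V}_1|_{U_0}\hookrightarrow\mathcal{V}_2|_{U_0}$; the inequalities $\beta_{1,i}\ge\beta_{2,i}$ give the analogous inclusion over $U_\infty$; and on the overlap $U_0\cap U_\infty$ both inclusions are the single map induced by $W_1\hookrightarrow W_2$ (all the $\alpha$- and $\beta$-twists disappear once $z$ is inverted, cf.\ the first step), so they glue to a global inclusion of sheaves $\mathcal{V}_1\hookrightarrow\mathcal{V}_2$. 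I expect the main obstacle to be exactly the first step: promoting the a priori independent constant re-trivializations (over $U_0$ and over $U_\infty$, and for $\mathcal{V}_1$ versus $\mathcal{V}_2$) to a single globally adapted basis of $\mathbb{C}^n$. This is where the $\mathbb{C}^*$-invariance hypothesis is indispensable, as it rigidifies the generic fibre and forces each $\mathcal{V}_a$ into monomial/flag form; for a general coherent subsheaf no such simultaneous normalization exists and the equivalence need not hold.
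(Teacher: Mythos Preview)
The paper does not prove this proposition; it simply cites it from \cite{liu2004s}. So there is no ``paper's own proof'' to compare against, and I can only assess your argument on its merits.

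Your necessity argument is essentially correct, and in fact does not require the common basis you set up: once you observe that a $\mathbb{C}^*$-invariant submodule of $\mathbb{C}[z]^n$ is graded and hence determined by an increasing chain of subspaces $M_0\subseteq M_1\subseteq\cdots\subseteq W$ with $\alpha_i=\min\{k:\dim M_k\ge i\}$, the inclusion $\mathcal{V}_1\subseteq\mathcal{V}_2$ gives $M^{(1)}_k\subseteq M^{(2)}_k$, hence $\dim M^{(1)}_k\le\dim M^{(2)}_k$, hence $\alpha_{1,i}\ge\alpha_{2,i}$; likewise at $\infty$.

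The genuine gap is in your ``first step'' and in how it feeds into sufficiency. You claim that a single basis $e_1,\ldots,e_n$ can be chosen adapted simultaneously to the $0$-flag and the $\infty$-flag of \emph{both} $\mathcal{V}_1$ and $\mathcal{V}_2$. That is four flags at once, and in general no such basis exists. Concretely, take $n=2$, $\mathcal{V}_2|_{U_0}=z\mathbb{C}[z]e_1\oplus\mathbb{C}[z]e_2$, $\mathcal{V}_2|_{U_\infty}=\mathbb{C}[w]e_1\oplus w\mathbb{C}[w]e_2$, and $\mathcal{V}_1$ the $\mathbb{C}^*$-invariant rank one subsheaf with generic fibre $\langle e_1+e_2\rangle$ and $\alpha_1=\beta_1=(1)$. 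Then $\mathcal{V}_1\subseteq\mathcal{V}_2$, but any basis adapted to the two flags of $\mathcal{V}_2$ must contain a vector in $\langle e_1\rangle$ and one in $\langle e_2\rangle$, hence cannot also contain a vector in $\langle e_1+e_2\rangle$. So your formula $z^{\alpha_{1,i}}e_i=z^{\alpha_{1,i}-\alpha_{2,i}}\cdot z^{\alpha_{2,i}}e_i$ is not available in general.

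This also exposes an issue with the sufficiency direction as you phrase it: for \emph{given} subsheaves $\mathcal{V}_1,\mathcal{V}_2$ the implication ``admissible $\Rightarrow$ $\mathcal{V}_1\subseteq\mathcal{V}_2$'' is simply false (take two distinct constant lines in $\mathcal{E}^2$, both with data $((0),(0))$). The statement is used in the paper only in the existence sense---given $\mathcal{V}_2$, there \emph{exists} a $\mathbb{C}^*$-invariant $\mathcal{V}_1\subseteq\mathcal{V}_2$ with the prescribed data iff admissibility holds---and your construction can be repaired for that reading: first pick a basis adapted to the two flags of $\mathcal{V}_2$ alone (two flags always admit a common adapted basis), and then \emph{define} $\mathcal{V}_1$ via the monomial generators $z^{\alpha_{1,i}}e_i$ and $w^{\beta_{1,i}}e_{\sigma(i)}$ for a suitable matching $\sigma$. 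The gluing and the exponent inequalities then go through as you wrote. You should make this existence interpretation explicit and drop the four-flag simultaneous-normalization claim.
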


Since $ {\mathcal{E}}_{r_i} $ is characterized by $ (a_{{r_i},\bullet }) $, then we claim that $  \mathrm{Hom} ({\mathcal{E}}_ {r_i} , {\mathcal{E}}_{r_{i+1}} ) \neq \emptyset $ if and only if $ (a_{{r_i},\bullet }) \rightarrow (a_{{r_{i+1}},\bullet }) $. Thus, the sheaves ${\mathcal{E}}_{\bullet} $ are characterized by following matrix
\begin{align}
A = \left[ \begin{array} { c c c c c } { a _ { r_1,1 } } & {\cdots } & {a _ { r_1,r_1 }} & { } & { } \\ { \vdots } & { \ddots } & { \ddots } & { \ddots } & { } \\ 
{ a _ { r_I , 1 } } & { \cdots } & { \cdots } & { \cdots } & { a _ { r_I , r_I } } \end{array} \right] _ { I \times r_I }
\end{align}
such that $ (\alpha_{r_1,\bullet}) \rightarrow (\alpha_{r_2,\bullet}) \rightarrow \cdots \rightarrow (\alpha_{r_I,\bullet}) $. And each such a matrix A will give a connected component of $ \mathrm{F}_{0,\beta} $, denoted by $ \mathrm{F}_{A}$. There are tautological sheaves $\widetilde{\mathcal{S}}_{r_i,\bullet}$ on $F_A$ determined by data $(a_{r_i,1},\cdots,a_{r_i,r_i})$, for more details, see \cite{liu2004s}. By proposition 3.1, we know $F_A$ can be identified as $ V^s_A/\mathbf{P}_A $ where $V^s_A$ is a closed subspace of $V^s(\mathbf{G})$ and $\mathbf{P}_A$ is a parabolic subgroup of $\mathbf{G}$, and both are determined by $A$.

More explicitly, from the discussion below (1), the long exact sequence (2) can be realized as
\begin{align*}
0 \rightarrow \oplus ^I _{i=1} \mathcal{H}om( {\mathcal{S}}_{r_i}, {\mathcal{S}}_{r_{i+1}} ) \rightarrow \oplus ^I _{i=1} \mathcal{H}om( {\mathcal{S}}_{r_i}, {\mathcal{S}}_{r_{i+1}} ) \rightarrow \mathcal{F} |_{\mathrm{F}_{A}} \rightarrow 0
\end{align*}
on the universal curve of $ \mathrm{F}_{A} $, 
\begin{align*}
\pi _1 : \mathrm{F}_{A} \times \mathbb{P}^1 \rightarrow  \mathrm{F}_{A} 
\end{align*}
where $S_{r_i}$ has a filtration ${F}_{\bullet}S_{r_i}$ such that
\begin{align*}
{\mathcal{S}}_{r,j} / {\mathcal{S}}_{r,j-1} = \left( \pi _ { 1 } ^ { * } \left(  { \widetilde{\mathcal { S }} } _ { i , j } /  { \widetilde{\mathcal { S }} } _ { i , j - 1 } \right) \right) \left( - a _ { i , j } z \right) \end{align*}
then we have 
\begin{align}
\psi^* I(q,Q) &= 1 + \sum_{ \{ d_{i}=\sum ^{r_i}_l a_{r_i, l} \}  }Q_{1}^{d_1} \cdots Q_{I}^{d_I} \sum_{w \in W/W_L } w  \{  \frac{\prod_{i, j < j^{\prime}}\prod_{k=1}^{a _ { r_i , j ^ { \prime } } - a _ {r_i , j } }(1-l^{\vee}_{r_i, j}l_{r_i, j^{\prime}}q^k)}{\prod_{i, j , j^{\prime \prime}}\prod_{k=1}^{a _ { r_{i+1} , j ^ { \prime \prime } } - a _ { r_i , j } }(1-l^{\vee}_{r_i, j}l_{r_{i+1},j^{\prime \prime }}q^k) } \nonumber \\ 
       & \cdot \frac{\prod_{i, j , j^{\prime \prime}}\prod_{k=0}^{a _ { r_{i+1} , j ^ { \prime \prime } } - a _ { r_i , j } -1}(1-l^{\vee}_{r_i, j}l_{r_{i+1},j^{\prime \prime }}q^{-k})}{\prod_{i, j < j^{\prime}}\prod_{k=1}^{a _ { r_i , j ^ { \prime } } - a _ { r_i , j } -1}(1-l^{\vee}_{r_i, j}l_{r_i, j^{\prime}}q^{-k})}  \frac{1}{\prod_{i, j < j^{\prime}}(1-l^{\vee}_{r_i, j}l_{r_i, j^{\prime}}) }  \}
\end{align}
where we set $l_{i,j}$ to be $\bar{\psi}^* (\widetilde{\mathcal{S}}_{r,j} / \widetilde{\mathcal{S}}_{r,j-1})^{\vee} $, and $\{ d_{i,\bullet} \}=\{ a_{r_i,\bullet} \}$ are admissible partitions.

\begin{remark}
For the complete flag variety, the map $i: V^s_{\tilde{\beta}}/\mathbf{P}_{\tilde{\beta}} \rightarrow  V^s(\mathbf{G})  / \mathbf{P} _ { \tilde { \beta } } $ is trivial, the evaluation map $ev_{\bullet}$ is identity.
\end{remark}

\begin{example}
Let's compute the degree 1 term of the $I$-function for complete flag variety $\mathrm{Fl}(\mathbb{C}^4)$, there are three fixed loci component
\begin{align*}
A = \left[ \begin{array} { c c c  } { 1 } & { } & { }  \\ { 0 } & { 0 } & {  }  \\ { 0 } & { 0 } & { 0 }  \end{array} \right], \ \ B= \left[ \begin{array} { c c c  } { 0 } & { } & { }  \\ { 0 } & { 1 } & {  }  \\ { 0 } & { 0 } & { 0 }  \end{array} \right], \ \ C= \left[ \begin{array} { c c c  } { 0 } & { } & { }  \\ { 0 } & { 0 } & {  }  \\ { 0 } & { 0 } & { 1 }  \end{array} \right]
\end{align*}
Let $ 0 \subset \mathcal{S}_1 \subset \mathcal{S}_2 \subset \mathcal{S}_3 \subset \mathbb{C}^4 \otimes \mathcal{O}_{Fl} $ be the tautological bundles on $\mathrm{Fl}(\mathbb{C}^4)$ and set, 
\begin{align*}
l_1= \mathcal{S}^{\vee}_1 ,\ \ l_2 = (\mathcal{S}_2 / \mathcal{S}_1)^{\vee} ,\ \ l_3 = (\mathcal{S}_3 / \mathcal{S}_2 )^{\vee}
\end{align*}
then the contributions from three of them are
\begin{align*}
 A : \frac{Q_1}{(1-q)(1-l_1 l_2^{\vee}q)} , \ \
 B : \frac{Q_2}{(1-q)(1-l_2 l_3^{\vee}q)} , \ \
 C : \frac{Q_3}{(1-q)(1-l_1 l_2 l_3^{2}q)}
\end{align*}
by Pl\"ucker embedding $\mathrm{Fl}(\mathbb{C}^4) \hookrightarrow \prod^3_{i=1} \mathbb{P}(\Lambda^{i} \mathbb{C}^i \subset \Lambda^i \mathbb{C}^4 ) $, set $ p_i $ are pull back of $ \mathcal{O}(1) $ of the i-th $ \mathbb{P}^3 $, then we have 
\begin{align*}
l_1 = p_1 ,\ \ l_2 = p^{-1}_1 p_2,\ \ l_3 = p^{-1}_2 p_3,\ \ l_1 l_2 l_3 = p_3 
\end{align*}
substitute into the contributions, then
\begin{align*}
 A : \frac{Q_1}{(1-q)(1-p^2_1 p^{-1}_2q)} , \ \
 B : \frac{Q_2}{(1-q)(1-p^{-1}_1 p^2_2 p^{-1}_3 q)} , \ \
 C : \frac{Q_3}{(1-q)(1-p^{-1}_2 p_3^{2}q)}
\end{align*}
these agree with the result of \cite{givental2003quantum}. 
\end{example}

\subsection{Difference equation}

In \cite{givental2003quantum}, Givental and Y.P.Lee found the difference equation of small $J$-function of complete flag variety, $Fl_r(\mathbb{C}^{r+1})$, we can get some similar difference equations of $I$-function with different levels, more precisely, using the notation of \cite{iritani2014reconstruction}
\begin{equation*}
\tilde{\mathcal{J}}(Q,q)=\prod^{r}_{i=1}p_{i}^{\frac{lnQ_{i}}{lnq}}\sum_{d_1 + \cdots + d_r \geq 0 }\mathcal{J}_{\mathbf{d}}
\end{equation*}
where
\begin{align*}
\mathcal{J}_{\mathbf{d}}=\sum _i \phi _i  \left\langle \frac{\phi^i}{1-qL} \right\rangle _{0,1,d}Q^{d_1}_1 \cdot Q^{d_2}_2 \cdots Q^{d_r}_r
\end{align*}
for $\mathbf{d}=(d_1,d_2, \cdots,d_r)$, $d=d_1+\cdots+d_r$. From \cite{givental2003quantum}, we know that $\tilde{J}$-function satisfies the following difference equation
\begin{equation*}
\hat { H } _ { Q , q } \tilde{\mathcal{J}}(Q,q)=(\Lambda _ { 0 } ^ { - 1 } + \ldots + \Lambda _ { r } ^ { - 1 }) \tilde{\mathcal{J}}(Q,q)
\end{equation*}
where we consider the equivariant K-theory, and
\begin{equation*}
\hat { H } _ { Q , q } \coloneq q ^ {  Q_1  \partial_{ Q _ { 1 } } }+ q ^ { Q_2 \partial_{ Q _ { 2 } }- Q_1 \partial_{ Q _ { 1 } }} \left( 1 - { Q_ 1 } \right) + \ldots + q ^ { - Q_r \partial_{ Q _ { r } }} \left( 1 - { Q_r } \right)
\end{equation*}
then we have the following recursive formula
\begin{equation}
\begin{split}
&\left[ p  _ { 1 } \left( q ^ { d _ { 1 } } - 1 \right)  + \ldots + p _ { i } p ^ { -1 } _ { i - 1 }  \left( q ^ { d _ { i } - d _ { i - 1 } } - 1 \right) + \ldots + p ^ { -1 } _ { r } \left( q ^ { - d _ { r } } - 1 \right) \right] \tilde{\mathcal{J}} _ {\mathbf{d} } \\
&=p _ { 2 } p ^ { -1 } _ { 1 } q ^ { d _ { 2 } - d _ { 1 } } \tilde{\mathcal{J}} _ { \mathbf{d} - \mathbf { 1 } _ { 1 } } + \ldots + p ^ { -1 } _ { r }  q ^ { - d _ { r } } \tilde{\mathcal{J}} _ { \mathbf{d} - \mathbf { 1 } _ { r } }
\end{split}
\end{equation} 
Let's consider representations given by the following characters of $ \mathbf{G}= GL_1 \times GL_2 \times \cdots \times GL_r   $
\begin{align*}
 \theta_i :  \mathbf{G} \rightarrow \mathbb{C}^* , \ \
            (g_1, \cdots, g_r) \mapsto \mathrm{det}(g_i)         
\end{align*}
then the associated level $ l_i $ structure is 
\begin{align*}
\mathcal { D } ^ {\theta_i, l_i } : = \operatorname { det } ^ { - l_i } R^{\bullet} \pi _ { * } \left( \mathfrak { P } \times _ { \mathbf{G} } \mathbb{C}_{\theta_i} \right)
\end{align*}
by previous discussion, we have
\begin{equation*}
\mathfrak { P } \times _ { \mathbf{G} } \mathbb{C}_{\theta_i} = \wedge^i \left( \pi^* {\mathcal{S}}_i \right) \otimes \mathcal{O}(-d_i)
\end{equation*}
then the level $l_i$ determinant bundle of pushing down to Hyper-Quot scheme and restrict to the distinguish fixed locus is
\begin{equation*}
 \mathcal { D } ^ {\theta_i, l_i }|_{\mathrm{F}_{0,\beta}} =p_i^{ l_i (d_i-1)}q^{ l_i  \frac{d_i(d_i-1)}{2}}
\end{equation*}
then the new I-function with level is
\begin{align*}
\tilde{I} ^ {\theta_i, l_i }:=\tilde{J}^{\theta_i, l_i ,0^+}= \prod^{r}_{i=1}p_{i}^{\frac{lnQ_{i}}{lnq}} \sum_{d_1 + \cdots + d_r \geq 0 }{I}^{\theta_i,l_i}_{\mathbf{d}}
\end{align*}
where
\begin{align*}
{I}^{\theta_i, l_i }_{\mathbf{d}} = I_\mathbf{d} \cdot p_i^{ l_i (d_i-1)}q^{ l_i  \frac{d_i(d_i-1)}{2}}
\end{align*}
and $\mathbf{d}=(d_1,d_2, \cdots,d_r)$. Using (6) we can get the new $I$-function satisfies the following equation
\begin{equation*}
\widetilde { H }^{\theta_i,l_i} _ { Q , q }  \tilde{\mathcal{J}}^{\theta_i,l_i,0^+}(Q,q)=(\Lambda _ { 0 } ^ { - 1 } + \ldots + \Lambda _ { r } ^ { - 1 }) \tilde{\mathcal{J}}^{R,l,0^+}(Q,q)
\end{equation*}
where
\begin{equation*}
\widetilde { H }^{\theta_i,l_i} _ { Q , q } \coloneq q ^ {  Q_1  \partial_{ Q _ { 1 } } }+ \cdots + q ^ { Q_{i+1} \partial_{ Q _ { i+1 } } - Q_{i} \partial_{ Q _ { i } }} \left( 1 - { Q_ i } \circ q^{l_i Q_i \partial_{Q_i}} \right) + \ldots + q ^ { - Q_r \partial_{ Q _ { r } }} \left( 1 - { Q_r } \right)
\end{equation*}
and
\begin{align*}
\tilde{\mathcal{J}}^{\theta_i,l_i,0^+}(Q,q) = \prod^{r}_{i=1}p_{i}^{\frac{lnQ_i}{lnq}}\sum_{d_1 + \cdots + d_r \geq 0 }J^{\theta_i,l_i,0^+}_{\mathbf{d}}
\end{align*}
so we get the following corollary.
\begin{corollary}
The $K_{\mathbf{T}}(X)$-valued vector-series $\tilde{\mathcal{J}}^{\theta_i,l_i,0^+}(Q,q)$ is the eigen-vector of the finite-difference operator $\widetilde{H}^{\theta_i,l_i}_{Q,q}$ with the eigen-value $\Lambda _ { 0 } ^ { - 1 } + \ldots + \Lambda _ { r } ^ { - 1 }$.
\end{corollary}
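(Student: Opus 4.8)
The plan is to deduce the eigenvector equation from the level-zero finite-difference equation of Givental--Lee by conjugating it with the explicit monomial that governs the level line bundle on the distinguished fixed loci. For the base equation I would first invoke the mirror theorem of Section 2.6: the small level-zero $I$-function of $X=Fl_r(\mathbb{C}^{r+1})$ coincides with the small $\mathcal{J}$-function, so by \cite{givental2003quantum} the series $\tilde{\mathcal{J}}(Q,q)=\prod_j p_j^{\ln Q_j/\ln q}\sum_{\mathbf{d}}\mathcal{J}_{\mathbf{d}}$ with $\mathcal{J}_{\mathbf{d}}=I_{\mathbf{d}}$ is an eigenvector of $\hat{H}_{Q,q}$ with eigenvalue $\Lambda_0^{-1}+\cdots+\Lambda_r^{-1}$; extracting the coefficient of $Q^{\mathbf{d}}$, this is equivalent for $\mathbf{d}\neq 0$ to the recursion (6), while the $\mathbf{d}=0$ coefficient records the classical relation $\Lambda_0^{-1}+\cdots+\Lambda_r^{-1}=p_1+p_2 p_1^{-1}+\cdots+p_r^{-1}$ in $K_{\mathbf{T}}(X)$.

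Next I would feed in the computation carried out just above the statement. On the distinguished fixed locus $\mathrm{F}_{0,\beta}$ of degree $\mathbf{d}=(d_1,\dots,d_r)$, the identity $\mathfrak{P}\times_{\mathbf{G}}\mathbb{C}_{\theta_i}=\wedge^i(\pi^*\mathcal{S}_i)\otimes\mathcal{O}(-d_i)$, pushed down to the Hyper-Quot scheme and restricted, gives $\mathcal{D}^{\theta_i,l_i}|_{\mathrm{F}_{0,\beta}}=p_i^{l_i(d_i-1)}q^{l_i d_i(d_i-1)/2}=:c(d_i)$, a monomial depending only on $d_i$. Since for the complete flag variety $ev_{\bullet}$ is the identity, $ev_{\bullet}^*p_i=p_i$, and the projection formula gives $I^{\theta_i,l_i}_{\mathbf{d}}=c(d_i)I_{\mathbf{d}}$, hence $\mathcal{J}^{\theta_i,l_i}_{\mathbf{d}}=c(d_i)\mathcal{J}_{\mathbf{d}}$. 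The only arithmetic identity I will need is $c(d)/c(d-1)=p_i^{l_i}q^{l_i(d-1)}$.

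The verification is then direct. Multiplying (6) through by $c(d_i)$ and rewriting each occurrence via $c(e_i)\mathcal{J}_{\mathbf{e}}=\mathcal{J}^{\theta_i,l_i}_{\mathbf{e}}$, every term whose $i$-th index equals $d_i$ (the left-hand side and the right-hand terms $\mathcal{J}_{\mathbf{d}-\mathbf{1}_k}$ with $k\neq i$) converts with no extra factor, whereas the single term $p_{i+1}p_i^{-1}q^{d_{i+1}-d_i}\mathcal{J}_{\mathbf{d}-\mathbf{1}_i}$ picks up the factor $c(d_i)/c(d_i-1)=p_i^{l_i}q^{l_i(d_i-1)}$. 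I would then observe that the resulting identity is exactly the coefficient of $Q^{\mathbf{d}}$ in $\widetilde{H}^{\theta_i,l_i}_{Q,q}\tilde{\mathcal{J}}^{\theta_i,l_i,0^+}=(\Lambda_0^{-1}+\cdots+\Lambda_r^{-1})\tilde{\mathcal{J}}^{\theta_i,l_i,0^+}$: passing from $\hat{H}_{Q,q}$ to $\widetilde{H}^{\theta_i,l_i}_{Q,q}$ only replaces $Q_i$ by $Q_i\circ q^{l_i Q_i\partial_{Q_i}}$ inside the $(i+1)$-st summand, and since $q^{l_i Q_i\partial_{Q_i}}$ multiplies the $Q^{\mathbf{e}}$-coefficient of $\prod_j p_j^{\ln Q_j/\ln q}(\cdots)$ by $p_i^{l_i}q^{l_i e_i}$, the term $-Q_i\circ q^{l_i Q_i\partial_{Q_i}}$ contributes exactly $-p_i^{l_i}q^{l_i(d_i-1)}\mathcal{J}^{\theta_i,l_i}_{\mathbf{d}-\mathbf{1}_i}$; the $\mathbf{d}=0$ coefficient is untouched since multiplication by $Q_i$ raises degree, so the eigenvalue stays $\Lambda_0^{-1}+\cdots+\Lambda_r^{-1}$. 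This gives the corollary.

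The main obstacle is the geometric input rather than the bookkeeping above: showing that $\mathcal{D}^{\theta_i,l_i}$ restricts to $\mathrm{F}_{0,\beta}$ as the single monomial $p_i^{l_i(d_i-1)}q^{l_i d_i(d_i-1)/2}$ rests on the explicit hyper-quot and filtration description of the fixed loci in Section 2.6 and on the compatibility of $\bar{\psi}^*$ with $R^{\bullet}\pi_*$. It is precisely this monomial shape that makes the conjugation clean, and it is special both to the complete flag variety (so that $ev_{\bullet}=\mathrm{id}$ and the twist is honest multiplication) and to the determinantal characters $\theta_i$; for a general level structure there is no mirror theorem available to supply the base recursion (6) to begin with.
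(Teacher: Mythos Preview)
Your proof is correct and follows exactly the paper's approach: the paper computes the restriction $\mathcal{D}^{\theta_i,l_i}|_{\mathrm{F}_{0,\beta}}=p_i^{l_i(d_i-1)}q^{l_i d_i(d_i-1)/2}$, writes $I^{\theta_i,l_i}_{\mathbf{d}}=I_{\mathbf{d}}\cdot p_i^{l_i(d_i-1)}q^{l_i d_i(d_i-1)/2}$, and then says only ``Using (6) we can get the new $I$-function satisfies the following equation''. You have supplied the bookkeeping the paper omits---the conjugation of the recursion by $c(d_i)$, the identification of the single term picking up $c(d_i)/c(d_i-1)=p_i^{l_i}q^{l_i(d_i-1)}$, and the match with the operator $Q_i\circ q^{l_iQ_i\partial_{Q_i}}$---but the strategy is identical.
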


\bibliographystyle{plain}
\bibliography{ref7}
\end{document}